\renewcommand{\leq}{\leqslant}
\renewcommand{\geq}{\geqslant}
\newcommand{\one}{\mathbf 1}
\newtheoremstyle{mythm}
{.5\baselineskip}	
{.5\baselineskip}	
{}		
{}		
{\bf}	
{.}		
{ }		
{}		
\theoremstyle{mythm}
\newtheorem{theorem}{Theorem}	
\newtheorem{lemma}[theorem]{Lemma}
\newtheorem{proposition}[theorem]{Proposition}
\newtheorem{corollary}[theorem]{Corollary}
\newtheorem{definition}[theorem]{Definition}
\newtheorem{example}[theorem]{Example}
\newtheorem{question}{Question}
\newtheoremstyle{myclaim}
{.5\baselineskip}	
{.5\baselineskip}	
{}		
{}		
{\sc}	
{. }		
{ }		
{}		
\theoremstyle{myclaim}
\newcommand{\astfill}{\noindent\xleaders\hbox{$\ast$}\hfill\kern0pt}
\newcommand{\textdefn}[1]{\textcolor{black!25!green!15!red!10!orange}{\textbf{#1}}}
\title{Selection Games with Minimal Usco Maps}
\author{Christopher Caruvana}
\address{School of Sciences, Indiana University Kokomo, Kokomo, IN, 46902, United States of America}
\email{chcaru@iu.edu}
\urladdr{https://chcaru.pages.iu.edu/}
\subjclass[2010]{91A44, 54C60, 54C35, 54D20, 54B20}
\keywords{minimal usco maps, topological selection principles, topology of uniform convergence on compacta}
\date{\today}
\begin{document}

\maketitle

\begin{abstract}
  We establish relationships between various topological selection games involving the space of minimal usco maps
  with various topologies, including the topology of pointwise convergence and the topology
  of uniform convergence on compact sets,
  and the underlying domain using full- and limited-information strategies.
  We also tie these relationships to analogous results related to spaces of continuous functions.
  The primary games we consider include Rothberger-like games, generalized point-open games,
  strong fan-tightness games, Tkachuk's closed discrete selection game,
  and Gruenhage's \(W\)-games.
\end{abstract}

\section{Introduction}

Minimal upper-semicontinuous compact-valued functions have a rich history, apparently arising
from the study of holomorphic functions and their so-called cluster sets; see \cite{ClusterSetsBook}.
The phrase \emph{minimal usco} was coined by
Christensen \cite{Christensen1982}, where a topological game similar to the Banach-Mazur game
was considered.
In this paper, using some techniques similar to those of Hol{\'{a}} and Hol{\'{y}} \cite{HolaHoly2022},
we tie connections between a space \(X\) and the space of minimal usco maps
with the topology of uniform convergence on certain kinds of subspaces of \(X\) similar in spirit to those
appearing in \cite{ClontzHolshouser,CHCompactOpen,CHContinuousFunctions}; in particular,
most of the results come in the form of selection game equivalences or dualities,
which rely on a variety of game-related results from
\cite{ClontzDualSelection,Tkachuk2018,TkachukCpBook,TkachukTwoPoint,CHCompactOpen,CHContinuousFunctions}.
We also tie some of these results to existing results relating topological properties of a space \(X\)
with the space of continuous real-valued functions on \(X\) with various topologies.

Consequences of these results include Corollary \ref{cor:ParticularMetrizability}, which captures
\cite[Cor. 4.5]{HolaHoly2022}, that states that
\(X\) is hemicompact if and only if \(MU_k(X)\), the space of minimal usco maps into \(\mathbb R\)
on \(X\) with the topology of uniform convergence on compact subsets, is metrizable.
Corollary \ref{cor:ParticularMetrizability} also shows that \(X\) is hemicompact if and only if \(MU_k(X)\)
is not discretely selective.
Corollary \ref{cor:BigRothberger} contains the assertion that \(X\) is \(k\)-Rothberger if and only if
\(MU_k(X)\) has strong countable fan-tightness at \(\mathbf 0\), the constant \(\{0\}\) function.
We end with Corollary \ref{cor:FinalCorollary}, which characterizes a space having a countable
so-called weak \(k\)-covering number in terms of strategic information in selection games.

\section{Preliminaries}

We use the word \emph{space} to mean \emph{topological space}.
When the parent space is understood from context, we use the notation
\(\mathrm{int}(A)\), \(\mathrm{cl}(A)\), and \(\partial A\) to denote
the interior, closure, and boundary of \(A\), respectively.
If we must specify the topological space \(X\), we use \(\mathrm{int}_X(A)\),
\(\mathrm{cl}_X(A)\), and \(\partial_X A\).

Given a function \(f : X \to Y\), we denote the graph of \(f\) by
\(\mathrm{gr}(f) = \{ \langle x , f(x) \rangle : x \in X \}\).
For a set \(X\), we let \(\wp(X)\) denote the set of subsets of \(X\) and
\(\wp^+(X) = \wp(X) \setminus \{\emptyset\}\).
For sets \(X\) and \(Y\), we let \[\mathrm{Fn}(X,Y) = \bigcup_{A \in \wp^+(X)} Y^A;\]
that is, \(\mathrm{Fn}(X,Y)\) is the collection of all \(Y\)-valued functions
defined on non-empty subsets of \(X\).

When a set \(X\) is implicitly serving as the parent space in context,
given \(A \subseteq X\), we will let \(\one_A\) be the indicator function
for \(A\); that is, \(\one_A : X \to \{0,1\}\) is defined by the rule
\[\one_A(x) = \begin{cases} 1, & x \in A\\ 0, & x \not\in A \end{cases}.\]

For any set \(X\), we let \(X^{<\omega}\) denote the set of finite sequences of \(X\),
\([X]^{<\omega}\) denote the set of finite subsets of \(X\), and, for any cardinal \(\kappa\),
\([X]^{\kappa}\) denote the set of \(\kappa\)-sized subsets of \(X\).

For a space \(X\), we let \(K(X)\) denote the set of all non-empty compact subsets of \(X\).
We let \(\mathbb K(X)\) denote the set \(K(X)\) endowed with the Vietoris topology;
that is, the topology with basis consisting of sets of the form
\[[U_1,U_2,\ldots, U_n]
= \left\{ K \in \mathbb K(X) : K \subseteq \bigcup_{j=1}^n U_j \wedge K^n \cap \prod_{j=1}^n U_j \neq \emptyset \right\}.\]
For more about this topology, see \cite{MichaelSubsets}.

\begin{definition}
  For a set \(X\), we say that a family \(\mathcal A \subseteq \wp^+(X)\) is an \textdefn{ideal of sets} if
  \begin{itemize}
      \item
      for \(A , B \in \mathcal A\), \(A \cup B \in\mathcal A\), and
      \item
      for every \(x \in X\), \(\{x\} \in \mathcal A\).
  \end{itemize}
  If \(X\) is a space, we say that an ideal of sets \(\mathcal A\) is an \textdefn{ideal of closed sets}
  if \(\mathcal A\) consists of closed sets
\end{definition}
Throughout, we will assume that any ideal of closed sets under consideration doesn't contain
the entire space \(X\).
Two ideals of closed sets of primary interest are
\begin{itemize}
    \item
    the collection of non-empty finite subsets of an infinite space \(X\) and
    \item
    the collection of non-empty compact subsets of a non-compact space \(X\).
\end{itemize}

\subsection{Selection Games}
General topological games have a long history, a lot of which can be gathered from Telg{\'a}rsky's survey
\cite{TelgarskySurvey}.
In this paper, we will be dealing only with single-selection games of countable length.

\begin{definition}
  Given sets \(\mathcal A\) and \(\mathcal B\), we define the \textdefn{single-selection game}
  \(\mathsf G_1(\mathcal A, \mathcal B)\) as follows.
  \begin{itemize}
      \item
      For each \(n\in\omega\), One chooses \(A_n \in \mathcal A\) and Two responds with \(x_n \in A_n\).
      \item
      Two is declared the winner if \(\{ x_n : n \in \omega \} \in \mathcal B\).
      Otherwise, One wins.
  \end{itemize}
\end{definition}

\begin{definition}
    We define strategies of various strength below.
    \begin{itemize}
        \item
        We use two forms of \emph{full-information} strategies.
        \begin{itemize}
            \item
            A \textdefn{strategy for player One} in \(\textsf{G}_1(\mathcal A, \mathcal B)\) is a function
            \(\sigma:(\bigcup \mathcal A)^{<\omega} \to \mathcal A\).
            A strategy \(\sigma\) for One is called \textdefn{winning} if whenever
            \(x_n \in \sigma\langle x_k : k < n \rangle\) for all \(n \in \omega\), \(\{x_n: n\in\omega\} \not\in \mathcal B\).
            If player One has a winning strategy, we write \(\mathrm{I} \uparrow \textsf{G}_1(\mathcal A, \mathcal B)\).
            \item
            A \textdefn{strategy for player Two} in \(\textsf{G}_1(\mathcal A, \mathcal B)\) is a function
            \(\tau:\mathcal A^{<\omega} \to \bigcup \mathcal A\).
            A strategy \(\tau\) for Two is \textdefn{winning} if whenever \(A_n \in \mathcal A\) for all \(n \in \omega\),
            \(\{\tau(A_0,\ldots,A_n) : n \in \omega\} \in \mathcal B\).
            If player Two has a winning strategy, we write \(\mathrm{II} \uparrow \textsf{G}_1(\mathcal A, \mathcal B)\).
        \end{itemize}
        \item
        We use two forms of \emph{limited-information} strategies.
        \begin{itemize}
            \item
            A \textdefn{predetermined strategy} for One is a strategy which only considers the current turn number.
            We call this kind of strategy predetermined because One is not reacting to Two's moves.
            Formally it is a function \(\sigma: \omega \to \mathcal A\).
            If One has a winning predetermined strategy, we write \(\mathrm{I} \underset{\mathrm{pre}}{\uparrow} \textsf{G}_1(\mathcal A, \mathcal B)\).
            \item
            A \textdefn{Markov strategy} for Two is a strategy which only considers the most recent move of player One and the current turn number.
            Formally it is a function \(\tau:\mathcal A \times \omega \to \bigcup \mathcal A\).
            If Two has a winning Markov strategy, we write \(\mathrm{II} \underset{\mathrm{mark}}{\uparrow} \textsf{G}_1(\mathcal A, \mathcal B)\).
        \end{itemize}
    \end{itemize}
\end{definition}

The reader may be more familiar with selection principles than selection games.
For more details on selection principles and relevant references, see \cite{KocinacSelectedResults,ScheepersI}.
\begin{definition}
    Let \(\mathcal A\) and \(\mathcal B\) be collections.
    The \textdefn{single-selection principle} \(\textsf{S}_1(\mathcal A, \mathcal B)\) for a space \(X\) is the following property.
    Given any \(A \in \mathcal A^\omega\), there exists \(\vec x \in \prod_{n\in\omega} A_n\) so that
    \(\{ \vec{x}_n : n \in \omega \} \in \mathcal B\).
\end{definition}
As mentioned in \cite[Prop. 15]{ClontzDualSelection}, \(\textsf{S}_1(\mathcal A, \mathcal B)\) holds if and only if \(\mathrm{I} \underset{\mathrm{pre}}{\not\uparrow} \textsf{G}_1(\mathcal{A},\mathcal{B})\).
Hence, we may establish equivalences between certain selection principles by addressing the corresponding
selection games.

\begin{definition}
  For a space \(X\), an open cover \(\mathscr U\) of \(X\) is said to be \textdefn{non-trivial}
  if \(\emptyset \not\in \mathscr U\) and \(X \not\in \mathscr U\).
\end{definition}
\begin{definition}
  Let \(X\) be a space and \(\mathcal A\) be a set of closed subsets of \(X\).
  We say that a non-trivial cover \(\mathscr U\) of \(X\) is an \textdefn{\(\mathcal A\)-cover}
  if, for every \(A \in \mathcal A\), there exists \(U \in \mathscr U\) so that
  \(A \subseteq U\).
\end{definition}

\begin{definition}
    For a collection \(\mathcal A\), we let \(\neg \mathcal A\) denote the collection of
    sets which are not in \(\mathcal A\).
    We also define the following classes for a space \(X\) and a collection \(\mathcal A\) of closed subsets of \(X\).
    \begin{itemize}
    \item
    \(\mathscr T_X\) is the family of all proper non-empty open subsets of \(X\).
    \item
    For \(x \in X\), \(\mathscr N_{X,x} = \{ U \in \mathscr T_X : x \in U \}\).
    \item
    For \(A \in \wp^+(X)\), \(\mathscr N_X(A) = \{ U \in \mathscr T_X : A \subseteq U \}\).
    \item
    \(\mathscr N_X[\mathcal A] = \{ \mathscr N_X(A) : A \in \mathcal A \}\),
    \item
    \(\mathrm{CD}_X\) is the set of all closed discrete subsets of \(X\).
    \item
    \(\mathscr D_X\) is the set of all dense subsets of \(X\).
    \item
    For \(x \in X\), \(\Omega_{X,x} = \{A \subseteq X : x \in \mathrm{cl}(A)\}\).
    \item
    For \(x\in X\), \(\Gamma_{X,x}\) is the set of all sequences of \(X\) converging to \(x\).
    \item
    \(\mathcal O_X\) is the set of all non-trivial open covers of \(X\).
    \item
    \(\mathcal O_X(\mathcal A)\) is the set of all \(\mathcal A\)-covers.
    \item
    \(\Lambda_X(\mathcal A)\) is the set of all \(\mathcal A\)-covers \(\mathscr U\) with the property that,
    for every \(A \in \mathcal A\), \(\{ U \in \mathscr U : A \subseteq U \}\) is infinite.
    \item
    \(\Gamma_X(\mathcal A)\) is the set of all countable \(\mathcal A\)-covers \(\mathscr U\) with the property that,
    for every \(A \in \mathcal A\), \(\{ U \in \mathscr U : A \subseteq U \}\) is co-finite.
\end{itemize}
\end{definition}

Note that, in our notation, \(\mathcal O_X([X]^{<\omega})\) is the set of all \(\omega\)-covers of \(X\),
which we will denote by \(\Omega_X\), and that \(\mathcal O_X(K(X))\) is the set of all \(k\)-covers of \(X\),
which we will denote by \(\mathcal K_X\).
We also use \(\Gamma_\omega(X)\) to denote \(\Gamma_X([X]^{<\omega})\) and \(\Gamma_k(X)\) to denote
\(\Gamma_X(K(X))\).

Also note that \(\textsf{S}_{1}(\mathcal O_X, \mathcal O_X)\) is the Rothberger property
and \(\mathsf G_1(\mathcal O_X, \mathcal O_X)\) is the Rothberger game.
If we let \(\mathbb P_X = \{ \mathscr N_{X,x} : x \in X \}\), then
\(\mathsf G_1(\mathbb P_X, \neg \mathcal O)\) is isomorphic to the point-open game studied by Galvin \cite{Galvin1978}
and Telg{\'{a}}rsky \cite{Telgarsky1975}.
The games \(\mathsf G_1(\mathscr N_{X,x} , \neg \Gamma_{X,x})\) and \(\mathsf G_1(\mathscr N_{X,x} , \neg \Omega_{X,x})\)
are two variants of Gruenhage's \(W\)-game (see \cite{Gruenhage1976}).
We refer to \(\mathsf G_1(\mathscr N_{X,x} , \neg \Gamma_{X,x})\) as Gruenhage's converging \(W\)-game
and \(\mathsf G_1(\mathscr N_{X,x} , \neg \Omega_{X,x})\) as Gruenhage's clustering \(W\)-game.
The games \(\mathsf G_1(\mathscr T_X, \neg \Omega_{X,x})\) and \(\mathsf G_1(\mathscr T_X, \mathrm{CD}_X)\) were introduced
by Tkachuk (see \cite{Tkachuk2018,TkachukTwoPoint}) and tied to Gruenhage's \(W\)-games in \cite{TkachukTwoPoint,ClontzHolshouser}.
The strong countable dense fan-tightness game at \(x\) is \(\mathsf G_1(\mathscr D_X , \Omega_{X,x} )\)
and the strong countable fan-tightness game at \(x\) is \(\mathsf G_1(\Omega_{X,x} , \Omega_{X,x} )\) (see \cite{BarmanDown2011}).

\begin{lemma}[See {\cite[Lemma 4]{CHContinuousFunctions}}] \label{lem:IdealLargeCovers}
    For a space \(X\) and an ideal of closed sets \(\mathcal A\) of \(X\),
    \(\mathcal O_X(\mathcal A) = \Lambda_X(\mathcal A)\).
\end{lemma}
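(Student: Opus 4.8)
The plan is to prove the two inclusions separately, with all of the nontrivial content living in $\mathcal O_X(\mathcal A) \subseteq \Lambda_X(\mathcal A)$.

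The inclusion $\Lambda_X(\mathcal A) \subseteq \mathcal O_X(\mathcal A)$ is immediate from the definitions: if $\mathscr U \in \Lambda_X(\mathcal A)$, then for each $A \in \mathcal A$ the collection $\{ U \in \mathscr U : A \subseteq U \}$ is infinite, hence in particular nonempty, so $\mathscr U$ witnesses the defining property of an $\mathcal A$-cover. No properties of the ideal are needed here.

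For the reverse inclusion I would fix $\mathscr U \in \mathcal O_X(\mathcal A)$ together with an arbitrary $A \in \mathcal A$, and argue by contradiction that $\{ U \in \mathscr U : A \subseteq U \}$ cannot be finite. Suppose this collection equals $\{ U_1, \ldots, U_k \}$; it is nonempty because $\mathscr U$ is an $\mathcal A$-cover, so $k \geq 1$. Since $\mathscr U$ is non-trivial we have $X \notin \mathscr U$, so each $U_i \neq X$ and I may choose a witness $z_i \in X \setminus U_i$ for every $i \leq k$. I then set $B = A \cup \{ z_1, \ldots, z_k \}$. The ideal structure does the essential work at this point: because $\mathcal A$ contains every singleton and is closed under finite unions, $B = A \cup \{z_1\} \cup \cdots \cup \{z_k\}$ again lies in $\mathcal A$. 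On the other hand, any $U \in \mathscr U$ with $B \subseteq U$ would in particular satisfy $A \subseteq U$, forcing $U = U_i$ for some $i \leq k$; but then $z_i \in B \setminus U_i$, a contradiction. Hence no member of $\mathscr U$ contains $B$, contradicting that $\mathscr U$ is an $\mathcal A$-cover and $B \in \mathcal A$. Therefore $\{ U \in \mathscr U : A \subseteq U \}$ is infinite, and since $A \in \mathcal A$ was arbitrary, $\mathscr U \in \Lambda_X(\mathcal A)$.

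The main (and essentially the only) obstacle is recognizing that the two defining features of an ideal of sets—closure under finite unions and containment of all singletons—are exactly what is required to enlarge $A$ into a still-admissible set $B$ that simultaneously escapes all finitely many candidate members of $\mathscr U$. Once that construction is in hand the contradiction is forced, and everything else is routine bookkeeping.
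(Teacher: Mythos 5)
Your proof is correct and is essentially the canonical argument: the paper itself states this lemma only by citation to \cite[Lemma 4]{CHContinuousFunctions} without reproducing a proof, and the proof given there is the same enlargement trick you use. All the needed ingredients appear in your write-up — non-triviality gives \(U_i \neq X\) so witnesses \(z_i \in X \setminus U_i\) exist, and the two ideal axioms (singletons plus closure under finite unions) put \(B = A \cup \{z_1,\ldots,z_k\}\) back in \(\mathcal A\), forcing the contradiction.
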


In what follows, we say that \(\mathcal G\) is a \emph{selection game} if there exist classes \(\mathcal A\), \(\mathcal B\)
so that \(\mathcal G = \textsf{G}_1(\mathcal A, \mathcal B)\).

Since we work with full- and limited-information strategies, we reflect this in our definitions of game equivalence and duality.
\begin{definition}
  We say that two selection games \(\mathcal G\) and \(\mathcal H\) are \textdefn{equivalent},
  denoted \(\mathcal G \equiv \mathcal H\), if the following hold:
  \begin{itemize}
      \item
      \(\mathrm{II} \underset{\mathrm{mark}}{\uparrow} \mathcal G \iff \mathrm{II} \underset{\mathrm{mark}}{\uparrow} \mathcal H\)
      \item
      \(\mathrm{II} \uparrow \mathcal G \iff \mathrm{II} \uparrow \mathcal H\)
      \item
      \(\mathrm{I} \not\uparrow \mathcal G \iff \mathrm{I} \not\uparrow \mathcal H\)
      \item
      \(\mathrm{I} \underset{\mathrm{pre}}{\not\uparrow} \mathcal G \iff \mathrm{I} \underset{\mathrm{pre}}{\not\uparrow} \mathcal H\)
  \end{itemize}
\end{definition}
We also use a preorder on selection games.
\begin{definition}
  Given selection games \(\mathcal G\) and \(\mathcal H\), we say that \(\mathcal G \leq_{\mathrm{II}} \mathcal H\) if the following implications hold:
  \begin{itemize}
    \item
    \(\mathrm{II} \underset{\mathrm{mark}}{\uparrow} \mathcal G \implies \mathrm{II} \underset{\mathrm{mark}}{\uparrow} \mathcal H\)
    \item
    \(\mathrm{II} \uparrow \mathcal G \implies \mathrm{II} \uparrow \mathcal H\)
    \item
    \(\mathrm{I} \not\uparrow \mathcal G \implies \mathrm{I} \not\uparrow \mathcal H\)
    \item
    \(\mathrm{I} \underset{\mathrm{pre}}{\not\uparrow} \mathcal G \implies \mathrm{I} \underset{\mathrm{pre}}{\not\uparrow} \mathcal H\)
  \end{itemize}
\end{definition}
Note that \(\leq_{\mathrm{II}}\) is transitive and that if \(\mathcal G \leq_{\mathrm{II}} \mathcal H\) and \(\mathcal H \leq_{\mathrm{II}} \mathcal G\), then \(\mathcal G \equiv \mathcal H\).
We use the subscript of \(\mathrm{II}\) since each implication in the definition of \(\leq_{\mathrm{II}}\)
is related to a transference of winning plays by Two.
\begin{definition}
  We say that two selection games \(\mathcal G\) and \(\mathcal H\) are \textdefn{dual} if the following hold:
  \begin{itemize}
    \item
    \(\mathrm{I} \uparrow \mathcal G \iff \mathrm{II} \uparrow \mathcal H\)
    \item
    \(\mathrm{II} \uparrow \mathcal G \iff \mathrm{I} \uparrow \mathcal H\)
    \item
    \(\mathrm{I} \underset{\mathrm{pre}}{\uparrow} \mathcal G \iff \mathrm{II} \underset{\mathrm{mark}}{\uparrow} \mathcal H\)
    \item
    \(\mathrm{II} \underset{\mathrm{mark}}{\uparrow} \mathcal G \iff \mathrm{I} \underset{\mathrm{pre}}{\uparrow} \mathcal H\)
  \end{itemize}
\end{definition}
We note one important way in which equivalence and duality interact.
\begin{lemma} \label{lem:BasicEquivDual}
    Suppose \(\mathcal G_1, \mathcal G_2, \mathcal H_1\), and \(\mathcal H_2\) are selection games
    so that \(\mathcal G_1\) is dual to \(\mathcal H_1\)
    and \(\mathcal G_2\) is dual to \(\mathcal H_2\).
    Then, if \(\mathcal G_1 \leq_{\mathrm{II}} \mathcal G_2\), \(\mathcal H_2 \leq_{\mathrm{II}} \mathcal H_1\).
    Consequently, if \(\mathcal G_1 \equiv \mathcal G_2\), then \(\mathcal H_1 \equiv \mathcal H_2\).
\end{lemma}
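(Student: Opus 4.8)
The plan is to prove \(\mathcal H_2 \leq_{\mathrm{II}} \mathcal H_1\) by checking each of its four defining implications separately, and then to obtain the final sentence about equivalence as a formal consequence. The central observation is that the four clauses making up \(\leq_{\mathrm{II}}\) and the four clauses making up duality are arranged so that each assertion about one of the \(\mathcal H_i\) can be rewritten, using the duality with \(\mathcal G_i\), as an assertion about \(\mathcal G_i\); once everything is expressed in terms of \(\mathcal G_1\) and \(\mathcal G_2\), the hypothesis \(\mathcal G_1 \leq_{\mathrm{II}} \mathcal G_2\) supplies exactly the needed implication, read contrapositively.

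Concretely, I would take the four targets in turn. For the full-information clause, duality gives \(\mathrm{II} \uparrow \mathcal H_i \iff \mathrm{I} \uparrow \mathcal G_i\), so the target \(\mathrm{II}\uparrow\mathcal H_2 \implies \mathrm{II}\uparrow\mathcal H_1\) becomes \(\mathrm{I}\uparrow\mathcal G_2 \implies \mathrm{I}\uparrow\mathcal G_1\), which is the contrapositive of the clause \(\mathrm{I}\not\uparrow\mathcal G_1 \implies \mathrm{I}\not\uparrow\mathcal G_2\) of \(\mathcal G_1 \leq_{\mathrm{II}}\mathcal G_2\). The remaining three are handled identically: the clause \(\mathrm{I}\not\uparrow\mathcal H_2 \implies \mathrm{I}\not\uparrow\mathcal H_1\) is translated by \(\mathrm{I}\uparrow\mathcal H_i \iff \mathrm{II}\uparrow\mathcal G_i\) into the contrapositive of \(\mathrm{II}\uparrow\mathcal G_1 \implies \mathrm{II}\uparrow\mathcal G_2\); the Markov clause \(\mathrm{II}\underset{\mathrm{mark}}{\uparrow}\mathcal H_2 \implies \mathrm{II}\underset{\mathrm{mark}}{\uparrow}\mathcal H_1\) is translated by \(\mathrm{II}\underset{\mathrm{mark}}{\uparrow}\mathcal H_i \iff \mathrm{I}\underset{\mathrm{pre}}{\uparrow}\mathcal G_i\) into the contrapositive of \(\mathrm{I}\underset{\mathrm{pre}}{\not\uparrow}\mathcal G_1 \implies \mathrm{I}\underset{\mathrm{pre}}{\not\uparrow}\mathcal G_2\); and the predetermined clause \(\mathrm{I}\underset{\mathrm{pre}}{\not\uparrow}\mathcal H_2 \implies \mathrm{I}\underset{\mathrm{pre}}{\not\uparrow}\mathcal H_1\) is translated by \(\mathrm{I}\underset{\mathrm{pre}}{\uparrow}\mathcal H_i \iff \mathrm{II}\underset{\mathrm{mark}}{\uparrow}\mathcal G_i\) into the contrapositive of \(\mathrm{II}\underset{\mathrm{mark}}{\uparrow}\mathcal G_1 \implies \mathrm{II}\underset{\mathrm{mark}}{\uparrow}\mathcal G_2\). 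Thus every target clause matches exactly one hypothesis clause, and since each duality statement is a biconditional, these translations lose no information.

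The only point that deserves emphasis, rather than being a genuine obstacle, is the reversal of direction: the conclusion is \(\mathcal H_2 \leq_{\mathrm{II}} \mathcal H_1\) and not \(\mathcal H_1 \leq_{\mathrm{II}} \mathcal H_2\). This is forced by the fact that every clause of \(\leq_{\mathrm{II}}\) is a ``Two wins / One fails'' statement, while duality trades a winning strategy for Two against a winning strategy for One (and simultaneously swaps the predetermined and Markov restrictions); passing to contrapositives then flips each implication arrow, which is exactly what turns a statement running from index \(1\) to index \(2\) into one running from index \(2\) to index \(1\). The main care in writing the proof will simply be bookkeeping: pairing each of the four duality biconditionals with the correct clause and tracking the negations.

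Finally, for the stated consequence, I would unpack \(\mathcal G_1 \equiv \mathcal G_2\) as the conjunction \(\mathcal G_1 \leq_{\mathrm{II}} \mathcal G_2\) and \(\mathcal G_2 \leq_{\mathrm{II}} \mathcal G_1\). Applying the main implication to the first gives \(\mathcal H_2 \leq_{\mathrm{II}} \mathcal H_1\), and applying it to the second, with the roles of the indices \(1\) and \(2\) interchanged, gives \(\mathcal H_1 \leq_{\mathrm{II}} \mathcal H_2\). By the remark following the definition of \(\leq_{\mathrm{II}}\), these two inequalities together yield \(\mathcal H_1 \equiv \mathcal H_2\).
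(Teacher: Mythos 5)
Your proof is correct: each of the four clauses of \(\mathcal H_2 \leq_{\mathrm{II}} \mathcal H_1\) is matched, via the appropriate duality biconditional, to the contrapositive of the corresponding clause of \(\mathcal G_1 \leq_{\mathrm{II}} \mathcal G_2\) (including the correct pairing of predetermined with Markov strategies), and the reversal of indices is exactly as it should be. The paper offers no proof of this lemma, treating it as routine; your argument is precisely the intended unwinding of the definitions, so nothing further is needed.
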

We will use consequences of \cite[Cor. 26]{ClontzDualSelection} to see that a few classes of selection games are dual.
\begin{lemma} \label{lem:DualGames}
    Let \(\mathcal A\) be an ideal of closed sets of a space \(X\) and \(\mathcal B\) be a collection.
    \begin{enumerate}[label=(\roman*)]
        \item
        By \cite[Cor. 3.4]{CHCompactOpen} and \cite[Thm. 38]{ClontzDualSelection},
        \(\mathsf G_1(\mathcal O_X(\mathcal A), \mathcal B)\) and \(\mathsf G_1(\mathscr N_X[\mathcal A], \neg \mathcal B)\)
        are dual. (Note that this is a general form of the duality of the Rothberger game and the point-open game.)
        \item
        By \cite[Cor. 33]{ClontzDualSelection},
        \(\mathsf G_1(\mathscr D_X,\mathcal B)\) and \(\mathsf G_1(\mathscr T_X, \neg\mathcal B)\) are dual.
        \item
        By \cite[Cor. 35]{ClontzDualSelection}, for \(x \in X\), \(\mathsf G_1(\Omega_{X,x} , \mathcal B)\) and
        \(\mathsf G_1(\mathscr N_{X,x} , \neg \mathcal B)\)
        are dual.
    \end{enumerate}
\end{lemma}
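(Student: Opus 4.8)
The plan is to treat all three dualities as direct specializations of the general single-selection duality machinery of \cite{ClontzDualSelection}; indeed, the text preceding the statement already announces that we draw on the consequences of \cite[Cor. 26]{ClontzDualSelection}. Abstractly, that machinery asserts that $\mathsf{G}_1(\mathcal{A},\mathcal{B})$ is dual to $\mathsf{G}_1(\mathcal{C},\neg\mathcal{B})$ whenever the move-collections $\mathcal{A}$ and $\mathcal{C}$ are suitably orthogonal over a common pool of possible responses: each legal response of Two against a move $A \in \mathcal{A}$ witnesses, dually, a move of One in the $\mathcal{C}$-game, and conversely, in such a way that a run won by Two in one game is precisely a run won by One in the other. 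Under this correspondence One's full-information strategies in one game transpose to Two's full-information strategies in the other, and One's predetermined strategies transpose to Two's Markov strategies, which is exactly the content of the four biconditionals in the definition of duality. Thus for each of (i)--(iii) the task reduces to exhibiting the relevant orthogonality and then quoting the appropriate corollary.

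For (i) I would apply \cite[Thm. 38]{ClontzDualSelection} to the pair $\bigl(\mathcal{O}_X(\mathcal{A}),\,\mathscr{N}_X[\mathcal{A}]\bigr)$, with \cite[Cor. 3.4]{CHCompactOpen} supplying the verification that, for an ideal of closed sets $\mathcal{A}$, the $\mathcal{A}$-covers are exactly the objects against which the neighborhood families $\mathscr{N}_X(A)$ reflect. This is the only place where the ideal axioms do real work: closure under finite unions and the presence of all singletons are what guarantee that selecting an open set $U \supseteq A$ from an $\mathcal{A}$-cover dualizes to One's move $\mathscr{N}_X(A)$ and back, so that the orthogonality required by \cite[Cor. 26]{ClontzDualSelection} holds. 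If the cited theorem is phrased in terms of large covers rather than $\mathcal{A}$-covers, Lemma \ref{lem:IdealLargeCovers} lets me interchange $\mathcal{O}_X(\mathcal{A})$ with $\Lambda_X(\mathcal{A})$ at no cost. Parts (ii) and (iii) are then essentially nominal: (ii) is \cite[Cor. 33]{ClontzDualSelection} read off with $\mathcal{A}=\mathscr{D}_X$ against $\mathscr{T}_X$, the orthogonality being that every dense set meets every proper non-empty open set, and (iii) is \cite[Cor. 35]{ClontzDualSelection} applied pointwise at $x$, pairing $\Omega_{X,x}$ against $\mathscr{N}_{X,x}$.

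The one genuine obstacle is confined to (i): I must check that the hypotheses of \cite[Thm. 38]{ClontzDualSelection} are met by $\mathcal{A}$-covers for an \emph{arbitrary} ideal of closed sets, not merely for the classical configuration $\mathcal{A}=\mathbb{P}_X$ that yields the Rothberger/point-open duality. Once the ideal structure is seen to furnish the reflection between $\mathscr{N}_X[\mathcal{A}]$ and $\mathcal{O}_X(\mathcal{A})$, all four strategy-level biconditionals for (i) follow uniformly, and (ii)--(iii) need nothing beyond identifying the collections and their orthogonality, since those corollaries are stated at exactly the level of generality required here.
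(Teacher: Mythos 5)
Your proposal is correct and takes essentially the same route as the paper: the paper gives no argument beyond the citations embedded in the statement itself---\cite[Cor. 3.4]{CHCompactOpen} together with \cite[Thm. 38]{ClontzDualSelection} for (i), and \cite[Cor. 33]{ClontzDualSelection}, \cite[Cor. 35]{ClontzDualSelection} for (ii) and (iii)---which is exactly the machinery you invoke, viewed as specializations of \cite[Cor. 26]{ClontzDualSelection}. Your additional remarks on hypothesis-checking (the ideal axioms, in particular containing all singletons, supplying the reflection condition in (i), with Lemma \ref{lem:IdealLargeCovers} available to pass between \(\mathcal O_X(\mathcal A)\) and \(\Lambda_X(\mathcal A)\)) are consistent elaboration rather than a different method.
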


We now state the translation theorems we will be using to establish some game equivalences.
\begin{theorem}[{\cite[Thm. 12]{CHContinuousFunctions}}]\label{TranslationTheorem}
    Let \(\mathcal A\), \(\mathcal B\), \(\mathcal C\), and \(\mathcal D\) be collections.
    Suppose there are functions \(\overleftarrow{T}_{\mathrm{I},n}:\mathcal B \to \mathcal A\) and
    \(\overrightarrow{T}_{\mathrm{II},n}: \left( \bigcup \mathcal A \right) \times \mathcal B \to \bigcup \mathcal B\)
    for each \(n \in \omega\), so that
    \begin{enumerate}[label=(\roman*)]
        \item
        if \(x \in \overleftarrow{T}_{\mathrm{I},n}(B)\), then \(\overrightarrow{T}_{\mathrm{II},n}(x,B) \in B\), and
        \item
        if \(\langle x_n : n \in \omega \rangle \in \prod_{n\in\omega} \overleftarrow{T}_{\mathrm{I},n}(B_n)\)
        and \(\{x_n : n\in\omega\} \in \mathcal C\),
        then \(\left\{\overrightarrow{T}_{\mathrm{II},n}(x_n,B_n) : n \in \omega \right\} \in \mathcal D\).
    \end{enumerate}
    Then \(\mathsf G_1(\mathcal A,\mathcal C) \leq_{\mathrm{II}} \mathsf G_1(\mathcal B, \mathcal D)\).
\end{theorem}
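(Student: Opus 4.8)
The plan is to unfold the relation $\leq_{\mathrm{II}}$ into its four defining implications and handle them in two groups according to the direction in which strategies are transferred. The two implications concerning Two (for full- and Markov strategies) ask us to push a winning strategy for Two \emph{forward}, from $\mathsf G_1(\mathcal A, \mathcal C)$ to $\mathsf G_1(\mathcal B, \mathcal D)$. The two implications concerning One (stated as $\mathrm{I}\not\uparrow$, for full- and predetermined strategies) I would prove by contrapositive, pulling a winning strategy for One \emph{backward}, from $\mathsf G_1(\mathcal B, \mathcal D)$ to $\mathsf G_1(\mathcal A, \mathcal C)$. In every case the only real content is supplied by hypotheses (i) and (ii): (i) guarantees that translated moves are legal, and (ii) guarantees that a winning run translates to a winning run.

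For the forward direction, suppose $\tau$ is a winning strategy for Two in $\mathsf G_1(\mathcal A, \mathcal C)$. I would define a strategy $\tau'$ for Two in $\mathsf G_1(\mathcal B, \mathcal D)$ as follows: when One has played $B_0, \dots, B_n \in \mathcal B$, translate each move backward by setting $A_k = \overleftarrow T_{\mathrm I, k}(B_k)$, feed $\langle A_0, \dots, A_n\rangle$ to $\tau$ to obtain $x_n = \tau\langle A_0, \dots, A_n\rangle \in A_n = \overleftarrow T_{\mathrm I, n}(B_n)$, and respond with $\tau'\langle B_0, \dots, B_n\rangle = \overrightarrow T_{\mathrm{II}, n}(x_n, B_n)$. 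Hypothesis (i) makes this response an element of $B_n$, hence legal. Given a full run $B_0, B_1, \dots$, the sequence $\langle x_n : n \in \omega\rangle$ lies in $\prod_n \overleftarrow T_{\mathrm I, n}(B_n)$ and satisfies $\{x_n : n \in \omega\} \in \mathcal C$ because $\tau$ wins; hypothesis (ii) then yields $\{\overrightarrow T_{\mathrm{II}, n}(x_n, B_n) : n \in \omega\} \in \mathcal D$, so $\tau'$ wins. For the Markov case, I note that $A_n$ depends only on $B_n$ and $n$, while $x_n = \tau(A_n, n)$ depends only on $A_n$ and $n$; thus the composite response depends only on $B_n$ and $n$, so $\tau'$ is again Markov.

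For the backward direction, I argue the contrapositive: given a winning strategy $\sigma$ for One in $\mathsf G_1(\mathcal B, \mathcal D)$, I build a winning strategy for One in $\mathsf G_1(\mathcal A, \mathcal C)$ by running $\sigma$ as a shadow game. One's first move is $A_0 = \overleftarrow T_{\mathrm I, 0}(\sigma\langle\rangle)$; inductively, when Two has answered $x_0, \dots, x_n$ in the $\mathcal A$-game with $x_k \in \overleftarrow T_{\mathrm I, k}(B_k)$, I translate these forward to $y_k = \overrightarrow T_{\mathrm{II}, k}(x_k, B_k) \in B_k$ (legal in the shadow game by (i)), feed $\langle y_0, \dots, y_n\rangle$ to $\sigma$ to obtain the next shadow move $B_{n+1}$, and set $A_{n+1} = \overleftarrow T_{\mathrm I, n+1}(B_{n+1})$. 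If Two were to win the resulting $\mathcal A$-game, i.e.\ $\{x_n\} \in \mathcal C$, then hypothesis (ii) would force $\{y_n\} \in \mathcal D$, making Two win the shadow run against $\sigma$ and contradicting that $\sigma$ is winning for One. Hence the constructed strategy is winning for One. When $\sigma$ is predetermined, $B_n = \sigma(n)$ and thus $A_n = \overleftarrow T_{\mathrm I, n}(\sigma(n))$ ignore Two's moves entirely, so the constructed strategy is predetermined as well.

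The main obstacle I anticipate is bookkeeping rather than conceptual depth: one must pin down the two translation directions (backward for One's moves via $\overleftarrow T_{\mathrm I, n}$, forward for Two's moves via $\overrightarrow T_{\mathrm{II}, n}$) and, crucially, verify in the limited-information cases that the constructed strategies honor the Markov and predetermined constraints, i.e.\ that each translated move genuinely depends only on the permitted data (the current move and the turn index $n$). This is exactly what the indexing of the translation maps by $n$ and their arities are designed to ensure, so the verification is routine once the constructions are written down; conditions (i) and (ii) do all the remaining work.
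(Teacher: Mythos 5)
Your proof is correct, and it is the standard argument for this result: the paper itself gives no proof, citing it as Theorem 12 of \cite{CHContinuousFunctions}, and the proof there proceeds exactly as you do --- transferring Two's (full and Markov) strategies forward via $\overleftarrow{T}_{\mathrm{I},n}$ and $\overrightarrow{T}_{\mathrm{II},n}$, and handling the two implications about One by the contrapositive shadow-game construction, with condition (i) ensuring legality and condition (ii) ensuring the winning condition transfers. Your verification that the Markov and predetermined constraints are preserved is precisely the point of indexing the translation maps by $n$, so nothing is missing.
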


Similar results to Theorem \ref{TranslationTheorem} for longer length games and finite-selection games,
even with simplified hypotheses, can be found in \cite{CHContinuousFunctions,CHHyperspaces,CHVietoris}.

We will also need a separation axiom for some results in the sequel.
\begin{definition}
  Let \(X\) be a space and \(\mathcal A\) be an ideal of closed subsets of \(X\).
  We say that \(X\) is \textdefn{\(\mathcal A\)-normal} if, given any \(A \in \mathcal A\)
  and \(U \subseteq X\) open with \(A \subseteq U\), there exists an open set \(V\)
  so that \(A \subseteq V \subseteq \mathrm{cl}(V) \subseteq U\).

  We also say that \(X\) is \textdefn{functionally \(\mathcal A\)-normal} if, for \(A \in \mathcal A\)
  and \(U \subseteq X\) open with \(A \subseteq U\), there exists a continuous function \(f : X \to \mathbb R\)
  so that \(f[A] = \{0\}\) and \(f[X \setminus U] = \{1\}\).
\end{definition}
Note that, if \(X\) is \(\mathcal A\)-normal, then \(X\) is regular.
If \(\mathcal A \subseteq K(X)\) and \(X\) is regular, then \(X\) is \(\mathcal A\)-normal.
If \(X\) is Tychonoff and \(\mathcal A \subseteq K(X)\), then \(X\) is functionally
\(\mathcal A\)-normal.

\subsection{Uniform Spaces} \label{section:UniformSpaces}

To introduce the basics of uniform spaces we'll need in this paper,
we mostly follow \cite[Chapter 6]{Kelley}.

We recall the standard notation involved with uniformities.
Let \(X\) be a set.
The diagonal of \(X\) is \(\Delta_X = \{ \langle x , x \rangle : x \in X \}\).
For \(E \subseteq X^2\), \(E^{-1} = \{ \langle y,x \rangle : \langle x,y \rangle \in E \}\).
If \(E = E^{-1}\), then \(E\) is said to be \emph{symmetric}.
If \(E , F \subseteq X^2\),
\[E \circ F = \{\langle x, z \rangle : (\exists y \in X)\ \langle x,y \rangle \in F \wedge \langle y , z \rangle \in E \}.\]
For \(E \subseteq X^2\), we let \(E[x] = \{ y \in X : \langle x , y \rangle \in E \}\) and
\(E[A] = \bigcup_{x \in A} E[x]\).
\begin{definition}
  A \textdefn{uniformity} on a set \(X\) is a set \(\mathcal E \subseteq \wp^+(X^2)\) which
  satisfies the following properties:
  \begin{itemize}
      \item
      For every \(E \in \mathcal E\), \(\Delta_X \subseteq E\).
      \item
      For every \(E\in \mathcal E\), \(E^{-1} \in \mathcal E\).
      \item
      For every \(E \in \mathcal E\), there exists \(F \in \mathcal E\) so that \(F \circ F \subseteq E\).
      \item
      For \(E , F \in \mathcal E\), \(E \cap F \in \mathcal E\).
      \item
      For \(E \in \mathcal E\) and \(F \subseteq X^2\), if \(E \subseteq F\), \(F \in \mathcal E\).
  \end{itemize}
  If, in addition, \(\Delta_X = \bigcap \mathcal E\), we say that the uniformity
  \(\mathcal E\) is \textdefn{Hausdorff}.
  By an \textdefn{entourage} of \(X\), we mean \(E \in \mathcal E\).
  The pair \((X, \mathcal E)\) is called a \textdefn{uniform space}.
\end{definition}
\begin{definition}
  For a set \(X\), we say that \(\mathcal B \subseteq \wp^+(X^2)\) is
  a \textdefn{base for a uniformity} if
  \begin{itemize}
      \item
      for every \(B \in \mathcal B\), \(\Delta_X \subseteq B\);
      \item
      for every \(B \in \mathcal B\), there is some \(A \in \mathcal B\) so that \(A \subseteq B^{-1}\);
      \item
      for every \(B \in \mathcal B\), there is some \(A \in \mathcal B\) so that \(A \circ A \subseteq B\); and
      \item
      for \(A , B \in \mathcal B\), there is some \(C \in \mathcal B\) so that \(C \subseteq A \cap B\).
  \end{itemize}
  If the uniformity generated by \(\mathcal B\) is \(\mathcal E\),
  we say that \(\mathcal B\) is a \textdefn{base} for \(\mathcal E\).
\end{definition}

If \((X,\mathcal E)\) is a uniform space, then the uniformity \(\mathcal E\)
generates a topology on \(X\) in the following way: \(U \subseteq X\) is declared
to be open provided that, for every \(x \in U\), there is some \(E \in \mathcal E\)
so that \(E[x] \subseteq U\).
An important result about this topology is
\begin{theorem}[see \cite{Kelley}] \label{thm:UniformMetrizability}
    A Hausdorff uniform space \((X,\mathcal E)\) is metrizable if and only if
    \(\mathcal E\) has a countable base.
\end{theorem}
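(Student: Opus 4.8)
The plan is to prove the two implications separately, with the reverse direction being the substantive one. For the forward direction, suppose the uniformity $\mathcal{E}$ is induced by a metric $d$, meaning that the sets $U_\varepsilon = \{\langle x,y\rangle : d(x,y) < \varepsilon\}$ form a base for $\mathcal{E}$. Then $\{U_{1/n} : n \geq 1\}$ is a countable base for $\mathcal{E}$, since every $U_\varepsilon$ contains some $U_{1/n}$ and every entourage contains some $U_\varepsilon$. This settles one direction immediately.

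For the reverse direction, assume $\mathcal{E}$ is Hausdorff with a countable base $\{B_n : n \in \omega\}$. First I would extract a sequence of symmetric entourages $\langle V_n : n \in \omega\rangle$ that is again a base for $\mathcal{E}$ and additionally satisfies the shrinking condition $V_{n+1} \circ V_{n+1} \circ V_{n+1} \subseteq V_n$ for every $n$. This is built by recursion on $n$: at stage $n+1$, use the composition axiom of a uniformity twice to pick $F$ with $F \circ F \subseteq V_n$ and then $G$ with $G \circ G \subseteq F$, so that $G \circ G \circ G \subseteq V_n$; symmetrize by passing to $G \cap G^{-1}$ (still an entourage, by the inverse and intersection axioms); and finally intersect with $B_{n+1}$ so that the resulting $V_{n+1}$ refines $B_{n+1}$, which keeps the sequence cofinal in $\mathcal{E}$ and hence a base. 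Note that $\Delta_X \subseteq V_{n+1}$ forces $V_{n+1} \subseteq V_{n+1}\circ V_{n+1}\circ V_{n+1}\subseteq V_n$, so the sequence is also decreasing.

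The heart of the argument is the Alexandroff--Urysohn metrization lemma applied to $\langle V_n\rangle$. I would introduce an auxiliary gauge $\rho : X^2 \to [0,\infty)$ by setting $\rho(x,y) = 2^{-n}$ when $\langle x,y\rangle \in V_n \setminus V_{n+1}$, $\rho(x,y) = 0$ when $\langle x,y\rangle \in \bigcap_n V_n$, and $\rho(x,y) = 1$ when $\langle x,y\rangle \notin V_0$, and then define
\[
  d(x,y) = \inf\left\{ \sum_{i=0}^{m-1} \rho(z_i, z_{i+1}) : m \in \omega,\ z_0 = x,\ z_m = y \right\},
\]
the infimum over all finite chains from $x$ to $y$. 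The main obstacle, and the only genuinely delicate step, is the two-sided estimate $\tfrac{1}{2}\rho(x,y) \leq d(x,y) \leq \rho(x,y)$. The upper bound is immediate from the length-one chain. The lower bound is proved by induction on the number of links $m$ of a chain: one splits the chain into an initial segment, a single middle link, and a terminal segment, and the triple-shrinking property $V_{n+1}\circ V_{n+1}\circ V_{n+1} \subseteq V_n$ is exactly what absorbs this decomposition and transfers membership in the $V_n$ across the split.

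Finally I would read off the conclusion from this estimate. The inequalities show that $d$ is a pseudometric (symmetry and the triangle inequality being automatic from the infimum-of-chains definition together with the symmetry of each $V_n$), and that the entourage sections $V_n[x]$ and the metric balls $\{y : d(x,y) < 2^{-n}\}$ are mutually cofinal; concretely $\tfrac12\rho \le d \le \rho$ yields inclusions of the form $V_{n+1} \subseteq \{\langle x,y\rangle : d(x,y) < 2^{-n}\} \subseteq V_{n-1}$. Hence the uniformity induced by $d$ coincides with $\mathcal{E}$. Since $\mathcal{E}$ is Hausdorff, $\bigcap \mathcal{E} = \Delta_X$, so $\bigcap_n V_n = \Delta_X$ and $d(x,y) = 0$ forces $x = y$; thus $d$ is a genuine metric inducing $\mathcal{E}$, and $(X,\mathcal{E})$ is metrizable.
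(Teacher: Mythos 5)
The paper offers no proof of this statement at all---it simply cites Kelley---and your argument is exactly the standard proof found in that reference (the Alexandroff--Urysohn metrization lemma with the triple-composition shrinking sequence, the chain-infimum pseudometric, and the \(\tfrac12\rho\le d\le\rho\) estimate), so it is correct and takes essentially the same approach as the cited source. One cosmetic slip: \((G\cap G^{-1})\cap B_{n+1}\) need not be symmetric, so you should intersect with \(B_{n+1}\) \emph{before} symmetrizing, or intersect with \(B_{n+1}\cap B_{n+1}^{-1}\) instead; this is a one-line fix.
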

With the topology induced by the uniformity, we endow \(X^2\) with the
resulting product topology.
\begin{lemma}[See \cite{Kelley}] \label{lem:ClosedEntourages}
    The family of open (or closed) symmetric entourages of a uniformity \(\mathcal E\)
    is a base for \(\mathcal E\).
\end{lemma}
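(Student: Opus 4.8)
The plan is to reduce, in two steps, to the task of producing an open (respectively closed) symmetric entourage inside an arbitrary entourage \(E \in \mathcal E\). Since \(\mathcal E\) is closed upward, exhibiting such a member of the proposed family inside every \(E\) shows that the upward closure of the family is all of \(\mathcal E\), which is exactly what it means for the family to be a base for \(\mathcal E\). First I would dispose of symmetry: for any \(E \in \mathcal E\), the set \(E \cap E^{-1}\) is a symmetric entourage contained in \(E\), using that \(E^{-1} \in \mathcal E\) and that \(\mathcal E\) is closed under finite intersection. Hence it suffices to find an open (resp.\ closed) symmetric entourage inside a given \emph{symmetric} \(E\).

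So fix a symmetric \(E \in \mathcal E\). Iterating the halving axiom and symmetrizing, I would choose a symmetric entourage \(W\) with \(W \circ W \circ W \subseteq E\): pick \(V\) with \(V \circ V \subseteq E\), then \(V'\) with \(V' \circ V' \subseteq V\), replace \(V'\) by \(V' \cap (V')^{-1}\) to name it \(W\), and use \(\Delta_X \subseteq W\) to absorb the extra factor. The whole argument then rests on the standard description of the product topology that \(\mathcal E\) induces on \(X^2\): a neighborhood base at a point \(\langle x,y \rangle\) is given by the rectangles \(W[x] \times W[y]\) as \(W\) ranges over symmetric entourages.

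For the open case I would show \(W \subseteq \mathrm{int}(E)\). Take \(\langle x,y\rangle \in W\) and any \(\langle x',y'\rangle \in W[x] \times W[y]\); then \(x'\) and \(x\) are \(W\)-close, \(x\) and \(y\) are \(W\)-related, and \(y\) and \(y'\) are \(W\)-close, so \(\langle x',y'\rangle\) sits on a three-link \(W\)-chain and thus lies in \(W \circ W \circ W \subseteq E\). Symmetry of \(W\) makes the direction of the links, and hence the paper's convention for \(\circ\), immaterial. Thus \(W[x]\times W[y] \subseteq E\), so every point of \(W\) is interior to \(E\); since \(\mathrm{int}(E) \supseteq W \supseteq \Delta_X\) it is an entourage, it is open by construction, and it is symmetric because coordinate-swap is a self-homeomorphism of \(X^2\) and \(E\) is symmetric. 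The closed case is the mirror image: for \(\langle x,y\rangle \in \mathrm{cl}(W)\) the rectangle \(W[x]\times W[y]\) meets \(W\), and the same chain computation places \(\langle x,y\rangle\) in \(W \circ W \circ W \subseteq E\); hence \(\mathrm{cl}(W)\) is a closed symmetric entourage with \(W \subseteq \mathrm{cl}(W) \subseteq E\).

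The only genuinely delicate points are bookkeeping: verifying that the \(W[x]\times W[y]\) really do form a neighborhood base for the product topology on \(X^2\), and keeping the composition direction straight under the paper's convention for \(\circ\). Both are defused as soon as \(W\) is taken symmetric, and everything else is a direct application of the uniformity axioms.
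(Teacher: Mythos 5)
The paper offers no proof of this lemma at all --- it is quoted from Kelley --- so there is nothing internal to compare against; your argument is correct and is essentially the standard proof from that reference: symmetrize via \(E \cap E^{-1}\), choose a symmetric \(W\) with \(W \circ W \circ W \subseteq E\), and sandwich \(W \subseteq \mathrm{int}(E)\) for the open case and \(W \subseteq \mathrm{cl}(W) \subseteq W \circ W \circ W \subseteq E\) for the closed case. The only external input you lean on is the standard fact that the sets \(W[x] \times W[y]\) form a neighborhood base at \(\langle x, y\rangle\) in \(X^2\) (equivalently, that each \(E[x]\) is a neighborhood of \(x\), which itself needs the halving axiom); that fact is likewise proved in Kelley, so flagging it without reproving it is reasonable, and the rest of your reasoning --- including the upward-closure justification that \(\mathrm{int}(E)\) and \(\mathrm{cl}(W)\) are entourages and the swap-homeomorphism argument for their symmetry --- is sound.
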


For a uniform space \((X,\mathcal E)\), there is a natural way to define a uniformity on \(K(X)\)
which is directly analogous to the Pompeiu-Hausdorff distance defined in the context of metric spaces.
\begin{definition}
    Let \((X,\mathcal E)\) be a uniform space and, for \(E \in \mathcal E\), define
    \[hE = \{\langle K , L \rangle \in K(X)^2 : K \subseteq E[L] \wedge L \subseteq E[K]\}.\]
\end{definition}

Just as the Pompeiu-Hausdorff distance on compact subsets generates the Vietoris topology,
the analogous uniformity also generates the Vietoris topology.
\begin{theorem}[see {\cite[Chapter 2]{CastaingValadier1977}}] \label{thm:UniformVietoris}
    For a uniform space \((X,\mathcal E)\), \(\mathcal B = \{ hE : E \in \mathcal E \}\)
    is a base for a uniformity on \(K(X)\);
    the topology generated by the uniform base \(\mathcal B\) is the Vietoris topology.
\end{theorem}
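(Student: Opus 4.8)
The plan is to verify directly that $\mathcal B = \{hE : E \in \mathcal E\}$ satisfies the four clauses defining a base for a uniformity on $K(X)$, and then to identify the induced topology with the Vietoris topology by comparing neighborhood filters at an arbitrary $K \in K(X)$.

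For the base axioms, the key preliminary observation is that each $hE$ is \emph{symmetric}: the defining condition $K \subseteq E[L] \wedge L \subseteq E[K]$ is invariant under swapping $K$ and $L$, so $hE = (hE)^{-1}$ and the symmetric-refinement clause holds trivially with $A = B$. The diagonal clause $\Delta_{K(X)} \subseteq hE$ reduces to $K \subseteq E[K]$, which follows from $\Delta_X \subseteq E$. For the intersection clause I would use $h(E \cap F) \subseteq hE \cap hF$, which follows from $(E \cap F)[A] \subseteq E[A] \cap F[A]$ together with the monotonicity of $A \mapsto E[A]$. The composition clause is the only one needing the semigroup structure: given $E$, pick $F \in \mathcal E$ with $F \circ F \subseteq E$ and show $hF \circ hF \subseteq hE$. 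Here the useful identity is $(F \circ F)[A] = F[F[A]]$; if $\langle K,L\rangle \in hF$ and $\langle L,M\rangle \in hF$, then $K \subseteq F[L] \subseteq F[F[M]] = (F \circ F)[M] \subseteq E[M]$, and symmetrically $M \subseteq E[K]$, so $\langle K,M\rangle \in hE$.

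For the topological identification, fix $K \in K(X)$; it suffices to show that $\{hE[K] : E \in \mathcal E\}$ and the family of basic Vietoris sets $[U_1,\dots,U_n]$ containing $K$ generate the same neighborhood filter (where $[U_1,\dots,U_n] \ni K$ means $K \subseteq \bigcup_j U_j$ and $K \cap U_j \neq \emptyset$ for each $j$). To see that every such Vietoris set is a uniform neighborhood, choose $x_j \in K \cap U_j$, and, using the standard fact that a compact set inside an open set admits an entourage $E_0$ with $E_0[K] \subseteq \bigcup_j U_j$ (see \cite{Kelley}) together with entourages $E_j$ satisfying $E_j[x_j] \subseteq U_j$, let $E$ be a symmetric entourage contained in $E_0 \cap \bigcap_j E_j$. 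Then for $L \in hE[K]$ we get $L \subseteq E[K] \subseteq \bigcup_j U_j$; and since $x_j \in K \subseteq E[L]$, symmetry of $E$ produces a point of $L$ lying in $E[x_j] \subseteq U_j$. Hence $hE[K] \subseteq [U_1,\dots,U_n]$.

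For the reverse inclusion, given $hE[K]$, use Lemma \ref{lem:ClosedEntourages} to choose an open symmetric entourage $F$ with $F \circ F \subseteq E$. The open sets $\{F[x] : x \in K\}$ cover the compact set $K$, so finitely many $U_i = F[x_i]$ with $x_i \in K$ suffice; set $V = [U_1,\dots,U_n]$, which contains $K$ because $x_i \in F[x_i]$. For $L \in V$, the inclusion $L \subseteq \bigcup_i F[x_i]$ with $x_i \in K$ gives $L \subseteq E[K]$; and for any $x \in K$, picking $i$ with $x \in F[x_i]$ and $z \in L \cap F[x_i]$, symmetry of $F$ yields $\langle z,x\rangle \in F \circ F \subseteq E$, so $x \in E[L]$ and thus $K \subseteq E[L]$. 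Therefore $V \subseteq hE[K]$, and the two topologies coincide. I expect this last inclusion to be the main obstacle, since it is where one must balance the $F \circ F \subseteq E$ refinement against the symmetry of $F$ and the compactness of $K$ to recover the condition $K \subseteq E[L]$ from the finitely many meeting conditions imposed by membership in $V$.
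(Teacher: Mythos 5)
Your proof is correct, but there is nothing in the paper to compare it against: the paper states this theorem purely as quoted background, with a pointer to Castaing--Valadier (Chapter 2), and gives no argument of its own. Your self-contained verification is the standard one and it goes through. The base axioms are all handled correctly --- in particular the observation that each $hE$ is automatically symmetric (so the symmetric-refinement clause is trivial), the inclusion $h(E \cap F) \subseteq hE \cap hF$, and the composition estimate $hF \circ hF \subseteq hE$ via $(F \circ F)[A] = F[F[A]]$, which respects the paper's convention for $\circ$. The topological identification is also sound: one direction uses the uniform Lebesgue-type lemma (an entourage $E_0$ with $E_0[K] \subseteq \bigcup_j U_j$) plus entourages pinning each witness point $x_j$ inside $U_j$; the other uses compactness of $K$, an open symmetric $F$ with $F \circ F \subseteq E$, and the meeting conditions imposed by $[U_1,\dots,U_n]$ to recover $K \subseteq E[L]$. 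The only step you leave implicit is the inclusion $F \subseteq E$, which you need when passing from $L \subseteq \bigcup_i F[x_i]$ to $L \subseteq E[K]$; it follows immediately from $\Delta_X \subseteq F$ and $F \subseteq F \circ F \subseteq E$, so it is worth a line but is not a gap.
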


For the set of functions from a space \(X\) to a uniform space \((Y,\mathcal E)\),
we review the uniformity which generates the topology of uniform convergence on a family
of subsets of \(X\).
For this review, we mostly follow \cite[Chapter 7]{Kelley}.
\begin{definition} \label{def:UniformStructure}
    For the set \(Y^X\) of functions from a set \(X\) to a uniform space \((Y,\mathcal E)\), we define,
    for \(A \in \wp^+(X)\) and \(E \in \mathcal E\),
    \[
        \mathbf U(A,E) = \{\langle f , g \rangle \in (Y^X)^2 : (\forall x \in A)\ \langle f(x), g(x) \rangle \in E\}.
    \]
    For the set of functions \(X \to \mathbb K(Y)\), we let \(\mathbf W(A,E) = \mathbf U(A,hE)\).
\end{definition}
If \(\mathcal B\) is a base for a uniformity on \(Y\) and \(\mathcal A\) is an ideal of subsets of \(X\),
then \(\{ \mathbf U(A,B) : A \in \mathcal A , B \in \mathcal B \}\) forms a base for a uniformity on \(Y^X\).
The corresponding topology generated by this base for a uniformity is the topology of uniform
convergence on \(\mathcal A\).
Consequently, \(\{ \mathbf W(A,B) : A \in \mathcal A , B \in \mathcal B \}\) is a base for a uniformity
on \(\mathbb K(Y)^X\).

\subsection{Usco Mappings}

In this section, we introduce the basic facts of usco mappings needed for this paper.
Of primary use is Theorem \ref{thm:HolaHolyChar} which offers a convenient characterization
of minimal usco maps.

A \emph{set-valued} function from \(X\) to \(Y\) is a function \(\Phi : X \to \wp(Y)\).
These are sometimes also referred to as \emph{multi-functions}.
\begin{definition}
  A set-valued function \(\Phi : X \to \wp(Y)\) is said to be \textdefn{upper semicontinuous} if, for every open
  \(V \subseteq Y\), \[\Phi^\leftarrow(V) := \{ x \in X : \Phi(x) \subseteq V\}\] is open in \(X\).
  An \textdefn{usco} map from a space \(X\) to \(Y\) is a set-valued map \(\Phi\) from \(X\) to \(Y\) which is upper semicontinuous
  and whose range contained in \(\mathbb K(Y)\).
  An usco map \(\Phi : X \to \mathbb K(Y)\) is said to be \textdefn{minimal} if its graph minimal with respect to the
  \(\subseteq\) relation.
  Let \(MU(X,Y)\) denote the collection of all minimal usco maps \(X \to \mathbb K(Y)\).
\end{definition}

It is clear that any map \(\Phi : X \to K(Y)\) is usco if and only if \(\Phi\) is continuous
relative to the \emph{upper Vietoris topology} on \(K(X)\), which is the topology generated
by the sets \(\{ K \in K(X) : K \subseteq U \}\) for open \(U \subseteq X\).
However, inspired by Theorem \ref{thm:UniformVietoris}, we maintain that the full Vietoris
topology is the desirable topology.
At minimum, for most spaces of interest, the full Vietoris topology is Hausdorff.
Moreover, the full Vietoris topology on the set of compact subsets has other desirable properties
that are intimately related with \(X\), like being metrizable when, and only when, \(X\) is metrizable.

As above, it is also clear that any continuous \(\Phi : X \to \mathbb K(Y)\) is usco and
that there are continuous \(\Phi : X \to \mathbb K(Y)\) which are not minimal.
As Example \ref{ex:UscoNotCont} will demonstrate, there are minimal usco
maps which are not continuous.

\begin{definition}
  Suppose \(\Phi : X \to \wp^+(Y)\).
  We say that a function \(f : X \to Y\) is a \textdefn{selection} of \(\Phi\)
  if \(f(x) \in \Phi(x)\) for every \(x \in X\).
  We let \(\mathrm{sel}(\Phi)\) be the set of all selections of \(\Phi\).

  If \(D \subseteq X\) is dense and \(f : D \to Y\) is so that \(f(x) \in \Phi(x)\)
  for each \(x \in D\), we say that \(f\) is a \textdefn{densely defined selection}
  of \(\Phi\).
\end{definition}

Recall that a point \(x\) is said to be an \emph{accumulation point}
of a net \(\langle x_\lambda : \lambda \in \Lambda \rangle\) if,
for every open neighborhood \(U\) of \(x\) and every \(\lambda \in \Lambda\),
there exists \(\mu \geq \lambda\) so that \(x_\mu \in U\).

The notion of subcontinuity was introduced by Fuller \cite{Fuller1968} which can be extended
to so-called densely defined functions in the following way.
See also \cite{Lechicki1990}.
\begin{definition}
    Suppose \(D \subseteq X\) is dense.
    We say that a function \(f : D \to Y\) is \textdefn{subcontinuous} if, for every \(x \in X\)
    and every net \(\langle x_\lambda : \lambda \in \Lambda \rangle\) in \(D\) with \(x_\lambda \to x\),
    \(\langle f(x_\lambda) : \lambda \in \Lambda \rangle\) has an accumulation point.
\end{definition}

The notion of semi-open sets was introduced by Levine \cite{Levine1963}.
\begin{definition}
  For a space \(X\), a set \(A \subseteq X\) is said to be \textdefn{semi-open} if
  \(A \subseteq \mathrm{cl}\ \mathrm{int}(A)\).
\end{definition}

The notion of quasicontinuity was introduced by Kempisty \cite{Kempisty1932}
and surveyed by Neubrunn \cite{Neubrunn}.
\begin{definition}
  A function \(f : X \to Y\) is said to be \textdefn{quasicontinuous} if, for each open
  \(V \subseteq Y\), \(f^{-1}(V)\) is semi-open in \(X\).

  If \(D \subseteq X\) is dense and \(f : D \to Y\), we will say that \(f\) is quasicontinuous
  if it is quasicontinuous on \(D\) with the subspace topology.
\end{definition}

\begin{definition}
  For \(f \in \mathrm{Fn}(X,Y)\), define \(\overline{f} : X \to \wp(Y)\) by the rule
  \[\overline{f}(x) = \{ y \in Y : \langle x , y \rangle \in \mathrm{cl}\ \mathrm{gr}(f) \}.\]
\end{definition}
\begin{theorem}[Hol{\'{a}}, Hol{\'{y}} \cite{HolaHoly2014}] \label{thm:HolaHolyChar}
  Suppose \(Y\) is regular and that \(\Phi : X \to \wp^+(X)\).
  Then the following are equivalent:
  \begin{enumerate}[label=(\roman*)]
    \item \label{tfae:MinimalUsco}
    \(\Phi\) is minimal usco.
    \item \label{tfae:AllSubQuasi}
    Every selection \(f\) of \(\Phi\) is subcontinuous, quasicontinuous, and \(\Phi = \overline{f}\).
    \item \label{tfae:SomeSubQuasi}
    There exists a selection \(f\) of \(\Phi\) which is subcontinuous,
    quasicontinuous, and \(\Phi = \overline{f}\).
    \item \label{tfae:SomeDenseSubQuasi}
    There exists a densely defined selection \(f\) of \(\Phi\) which is subcontinuous,
    quasicontinuous, and \(\Phi = \overline{f}\).
  \end{enumerate}
\end{theorem}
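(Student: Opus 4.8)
The plan is to establish the cycle \ref{tfae:MinimalUsco} $\Rightarrow$ \ref{tfae:AllSubQuasi} $\Rightarrow$ \ref{tfae:SomeSubQuasi} $\Rightarrow$ \ref{tfae:SomeDenseSubQuasi} $\Rightarrow$ \ref{tfae:MinimalUsco}. The two middle links are formal: \ref{tfae:AllSubQuasi} $\Rightarrow$ \ref{tfae:SomeSubQuasi} needs only that $\Phi$ admits at least one selection, which holds since each $\Phi(x)$ is nonempty, and \ref{tfae:SomeSubQuasi} $\Rightarrow$ \ref{tfae:SomeDenseSubQuasi} holds because a genuine selection is a densely defined selection with $D = X$. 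So the content sits in \ref{tfae:MinimalUsco} $\Rightarrow$ \ref{tfae:AllSubQuasi} and \ref{tfae:SomeDenseSubQuasi} $\Rightarrow$ \ref{tfae:MinimalUsco}. I would first isolate two auxiliary facts that carry most of the weight: \textbf{(A)} if $D \subseteq X$ is dense and $f : D \to Y$ is subcontinuous with $Y$ regular, then $\overline f$ is usco; and \textbf{(B)} the \emph{refinement} characterization of minimality, namely that an usco $\Phi$ is minimal if and only if for every nonempty open $U \subseteq X$ and open $V \subseteq Y$ with $\Phi(x) \cap V \neq \emptyset$ for some $x \in U$, there is a nonempty open $W \subseteq U$ with $\Phi(w) \subseteq V$ for all $w \in W$.

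For \ref{tfae:MinimalUsco} $\Rightarrow$ \ref{tfae:AllSubQuasi}, fix an arbitrary selection $f$. Subcontinuity is the easiest: given a net $x_\lambda \to x$, the values $f(x_\lambda) \in \Phi(x_\lambda)$ must accumulate in the compact set $\Phi(x)$, since otherwise a finite subcover of $\Phi(x)$ together with upper semicontinuity would force a tail of the net out of an open set that nonetheless contains every $\Phi(x_\lambda)$, a contradiction. Because an usco map into a regular space has closed graph, $\mathrm{gr}(f) \subseteq \mathrm{gr}(\Phi)$ gives $\mathrm{gr}(\overline f) = \mathrm{cl}\,\mathrm{gr}(f) \subseteq \mathrm{gr}(\Phi)$; by (A) the map $\overline f$ is usco, so minimality of $\Phi$ forces $\Phi = \overline f$. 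Quasicontinuity of $f$ is then read off from (B): if $f(x) \in V$ then $\Phi(x) \cap V \neq \emptyset$, so every open $U \ni x$ contains a nonempty open $W$ with $\Phi(W) \subseteq V$, whence $W \subseteq f^{-1}(V)$ and $f^{-1}(V)$ is semi-open.

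For \ref{tfae:SomeDenseSubQuasi} $\Rightarrow$ \ref{tfae:MinimalUsco}, apply (A) to conclude that $\Phi = \overline f$ is usco; subcontinuity together with regularity is exactly what makes each $\overline f(x)$ nonempty and compact and makes $\overline f$ upper semicontinuous, via a net argument in which points of $\overline f(x_\lambda)$ are approximated by genuine values of $f$ on $D$ and an accumulation value is extracted outside a prescribed open set. Minimality is where quasicontinuity enters. Suppose $\Psi$ is usco with $\mathrm{gr}(\Psi) \subseteq \mathrm{gr}(\Phi)$; since $\mathrm{gr}(\Psi)$ is closed it suffices to show $f(d) \in \Psi(d)$ for every $d \in D$. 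If $f(d) \notin \Psi(d)$, use regularity to find an open $V \ni f(d)$ with $\overline V \cap \Psi(d) = \emptyset$, and upper semicontinuity of $\Psi$ to find an open $U \ni d$ with $\Psi(U) \cap \overline V = \emptyset$. Quasicontinuity of $f$ at $d$ yields a nonempty relatively open $G = D \cap O \subseteq U \cap D$ (with $\emptyset \neq O \subseteq U$) on which $f$ takes values in $V$. For any $x \in O$, each point of $\overline f(x)$ is a limit of values $f(d_\beta)$ along a net $d_\beta \to x$ in $D$, and since $O$ is a neighborhood of $x$ with $D \cap O \subseteq G$, these values eventually lie in $V$; hence $\overline f(x) \subseteq \overline V$. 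Then $\emptyset \neq \Psi(x) \subseteq \Phi(x) = \overline f(x) \subseteq \overline V$, contradicting $\Psi(U) \cap \overline V = \emptyset$.

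I expect the main obstacle to be the proof of (B), specifically the forward direction: when the refinement property fails, one must excise the part of $\mathrm{gr}(\Phi)$ lying over $U$ that meets $V$ in order to produce a strictly smaller usco map, and keeping the trimmed map compact-valued, nonempty-valued, and upper semicontinuous requires careful use of the regularity of $Y$ and of the separation between $\overline V$ and the retained values. The surrounding net-and-accumulation arguments in (A) and in the minimality step are routine once one passes to subnets and exploits compactness, but they must be threaded through the densely defined setting, where every estimate on $f$ has to be obtained by approximation from $D$.
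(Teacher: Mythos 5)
The paper does not prove this theorem at all: it is imported verbatim from Hol\'{a}--Hol\'{y} \cite{HolaHoly2014} and used as a black box, so there is no internal proof to compare your argument against. Judged on its own, your reconstruction is sound and follows what is essentially the standard route in the minimal usco literature: the cycle \ref{tfae:MinimalUsco} $\Rightarrow$ \ref{tfae:AllSubQuasi} $\Rightarrow$ \ref{tfae:SomeSubQuasi} $\Rightarrow$ \ref{tfae:SomeDenseSubQuasi} $\Rightarrow$ \ref{tfae:MinimalUsco}, with your fact (A) being the Lechicki--Levi-type extension lemma (the paper even cites \cite{Lechicki1990} for exactly this circle of ideas) and your fact (B) being the classical refinement/``small open set'' criterion for minimality. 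All four implications are argued correctly: the net argument for subcontinuity of selections, the closed-graph argument giving $\mathrm{cl}\,\mathrm{gr}(f) \subseteq \mathrm{gr}(\Phi)$ and hence $\Phi = \overline{f}$ by minimality, the semi-openness of $f^{-1}(V)$ via (B), and the $V$/$\mathrm{cl}(V)$ separation argument showing $\mathrm{gr}(\Psi) \subseteq \mathrm{gr}(\Phi)$ forces $f(d) \in \Psi(d)$ on $D$ are all correct uses of regularity and quasicontinuity.

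Two remarks on the points you flag as obstacles. First, the forward direction of (B) is easier than you fear: if the refinement property fails for a pair $(U,V)$, then no $x \in U$ can satisfy $\Phi(x) \subseteq V$ (otherwise $W = U \cap \Phi^{\leftarrow}(V)$ would witness the property), so the trimmed map $x \mapsto \Phi(x) \setminus V$ on $U$ (and $\Phi(x)$ off $U$) automatically has nonempty compact values; its graph is $\mathrm{gr}(\Phi) \setminus (U \times V)$, which is closed since usco maps into regular spaces have closed graphs, and a nonempty-valued map with closed graph dominated by an usco map is usco. No delicate excision or separation between $\mathrm{cl}(V)$ and retained values is needed. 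Second, the place where real technical work remains is (A), specifically compactness of $\overline{f}(x)$ and upper semicontinuity of $\overline{f}$ from subcontinuity alone; your one-sentence description of the net argument glosses over the iterated-limit (or filter) construction this requires, but the statement is a known lemma and your proposed use of it is correct.
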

A consequence of Theorem \ref{thm:HolaHolyChar} we will use is that, for \(\Phi \in MU(X,\mathbb R)\),
the function \(f : X \to \mathbb R\) defined by \(f(x) = \max \Phi(x)\) is a selection
of \(\Phi\), and hence subcontinuous and quasicontinuous.

We will be using the following to construct certain functions.
\begin{lemma} \label{lem:UsefulFunction}
    Let \(f , g : X \to Y\) and \(U \in \mathscr T_X\) and define \(h : X \to Y\) by the rule
    \[h(x) = \begin{cases} f(x), & x \in \mathrm{cl}(U); \\ g(x), & x \not\in \mathrm{cl}(U).\end{cases}\]
    \begin{enumerate}[label=(\roman*)]
        \item \label{charSub}
        If \(f\) and \(g\) are subcontinuous, then \(h\) is subcontinuous.
        \item \label{charQuasi}
        If \(f\) is constant, and \(g\) is quasicontinuous,
        then \(h\) is quasicontinuous.
    \end{enumerate}
    Consequently, if \(f\) is constant and \(g\) is both subcontinuous and quasicontinuous, then
    \(h\) is both subcontinuous and quasicontinuous, which implies that \(\overline{h}\)
    is minimal usco.
\end{lemma}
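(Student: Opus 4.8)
The plan is to dispatch the two parts separately and then feed the result into Theorem \ref{thm:HolaHolyChar}. Throughout, I would abbreviate \(C = \mathrm{cl}(U)\) and record that \(X \setminus C\) is open. For \ref{charSub} I would argue straight from the net definition of subcontinuity. Fix \(x \in X\) and a net \(\langle x_\lambda : \lambda \in \Lambda \rangle\) with \(x_\lambda \to x\); the goal is to produce an accumulation point of \(\langle h(x_\lambda) : \lambda \in \Lambda \rangle\). The driving observation is the dichotomy that the net is either frequently in \(C\) or frequently in \(X \setminus C\): if it is not frequently in \(C\), then it is eventually, hence frequently, in \(X \setminus C\). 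In the first case the cofinal set \(\Lambda' = \{\lambda : x_\lambda \in C\}\) is directed under the inherited order, so \(\langle x_\lambda : \lambda \in \Lambda' \rangle\) is a subnet converging to \(x\) on which \(h\) agrees with \(f\); subcontinuity of \(f\) then hands us an accumulation point \(y\) of \(\langle f(x_\lambda) : \lambda \in \Lambda' \rangle = \langle h(x_\lambda) : \lambda \in \Lambda' \rangle\). Since an accumulation point of a subnet is an accumulation point of the original net, \(y\) is the point we need. The second case is verbatim with \(g\) on \(\Lambda'' = \{\lambda : x_\lambda \in X \setminus C\}\).

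For \ref{charQuasi} I would reduce quasicontinuity to three stability facts about semi-open sets. Fixing an open \(V \subseteq Y\), I would compute
\[ h^{-1}(V) = \bigl(C \cap f^{-1}(V)\bigr) \cup \bigl((X\setminus C) \cap g^{-1}(V)\bigr). \]
Because \(f\) is constant, \(f^{-1}(V)\) is either \(\emptyset\) or all of \(X\), so \(h^{-1}(V)\) collapses to either \((X \setminus C) \cap g^{-1}(V)\) or \(C \cup \bigl((X\setminus C)\cap g^{-1}(V)\bigr)\). It then suffices to record that (a) the intersection of an open set with a semi-open set is semi-open, (b) \(C = \mathrm{cl}(U)\) is itself semi-open, since \(U\) open gives \(U \subseteq \mathrm{int}(C)\) and hence \(C \subseteq \mathrm{cl}\,\mathrm{int}(C)\), and (c) any union of semi-open sets is semi-open. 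Fact (a), applied to the open set \(X \setminus C\) and the semi-open set \(g^{-1}(V)\) (using quasicontinuity of \(g\)), settles the first form; combining this with (b) and (c) settles the second. Note that the constancy of \(f\) is exactly what is used here: it forces \(f^{-1}(V)\in\{\emptyset,X\}\) and thereby keeps \(C\cap f^{-1}(V)\) within reach of these closure properties.

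For the final assertion, if \(f\) is constant and \(g\) is both subcontinuous and quasicontinuous, then \ref{charSub} and \ref{charQuasi} make \(h\) subcontinuous and quasicontinuous. Since \(h\) is a globally defined selection of \(\overline{h}\) with \(\overline{h} = \overline{h}\), condition \ref{tfae:SomeSubQuasi} of Theorem \ref{thm:HolaHolyChar} is satisfied, so \(\overline{h}\) is minimal usco. I expect the only genuine care to be needed in \ref{charSub}: one must justify that the frequently-visited region produces an honest convergent subnet and that its accumulation points transfer back to the full net. The semi-open set manipulations in \ref{charQuasi}, though they are the technical core of that part, are routine point-set verifications of standard Levine-type identities.
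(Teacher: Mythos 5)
Your proposal is correct. Part \ref{charSub} is essentially the paper's argument: both of you split according to whether the net is frequently in one region or eventually in the other, restrict to the corresponding cofinal subnet (on which \(h\) agrees with \(f\) or with \(g\)), invoke subcontinuity there, and transfer the accumulation point back to the full net. Part \ref{charQuasi}, however, takes a genuinely different route. The paper verifies semi-openness of \(h^{-1}(V)\) pointwise: given \(x \in h^{-1}(V)\) and a neighborhood \(W\) of \(x\), it treats the cases \(x \in \mathrm{cl}(U)\) (where constancy of \(f\) gives \(U \subseteq \mathrm{int}(h^{-1}(V))\) and closure gives \(W \cap U \neq \emptyset\)) and \(x \notin \mathrm{cl}(U)\) (where quasicontinuity of \(g\) is applied inside the open set \(W \setminus \mathrm{cl}(U)\)). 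You instead decompose \(h^{-1}(V)\) globally as \(\bigl(\mathrm{cl}(U) \cap f^{-1}(V)\bigr) \cup \bigl((X\setminus \mathrm{cl}(U)) \cap g^{-1}(V)\bigr)\), use constancy of \(f\) to collapse this to one of two explicit forms, and finish with three stability facts for semi-open sets: open intersect semi-open is semi-open, the closure of an open set is semi-open, and unions of semi-open sets are semi-open. All three are standard Levine-type facts and are easily checked (for the first: if \(O\) is open and \(S\) semi-open, then \(O \cap \mathrm{int}(S) \subseteq \mathrm{int}(O \cap S)\) and every neighborhood of a point of \(O \cap S\) meets \(O \cap \mathrm{int}(S)\)), so your argument is complete modulo these routine verifications. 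What your organization buys is reusability and transparency: it isolates general lemmas and shows exactly where constancy of \(f\) enters, namely in forcing \(f^{-1}(V) \in \{\emptyset, X\}\); what the paper's pointwise argument buys is self-containment, requiring no auxiliary facts about semi-open sets. Your handling of the final assertion, applying condition \ref{tfae:SomeSubQuasi} of Theorem \ref{thm:HolaHolyChar} with \(h\) itself serving as the subcontinuous, quasicontinuous selection of \(\overline{h}\), is exactly what the paper does.
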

\begin{proof}
    \ref{charSub}
    Suppose \(\langle x_\lambda : \lambda \in \Lambda \rangle\) is so that \(x_\lambda \to x\).
    If there is a cofinal subnet of \(\langle x_\lambda : \lambda \in \Lambda \rangle\)
    which is contained in \(X \setminus \mathrm{cl}(U)\), then we can appeal to the subcontinuity of \(g\) to see that
    \(\langle h(x_\lambda) : \lambda \in \Lambda \rangle\) has an accumulation point.
    Otherwise, we can let \(\lambda_0 \in \Lambda\) be so that, for all
    \(\lambda \geq \lambda_0\), \(x_\lambda \in \mathrm{cl}(U)\).
    Then \(\langle x_\lambda : \lambda \geq \lambda_0 \rangle\) is a net contained
    in \(\mathrm{cl}(U)\), so we can use the subcontinuity of \(f\) to establish that
    \(\langle h(x_\lambda) : \lambda \in \Lambda \rangle\) has an accumulation point.

    \ref{charQuasi}
    Suppose \(V \subseteq Y\) is open so that \(h^{-1}(V) \neq \emptyset\).
    Let \(x \in h^{-1}(V)\) and suppose \(W\) is open with \(x \in W\).
    We proceed by cases.

    If \(x \in \mathrm{cl}(U)\), then \(W \cap U \neq \emptyset\).
    Notice also that \(U \subseteq h^{-1}(V)\) in this case since \(f\) is constant.
    So \(W \cap \mathrm{int}(h^{-1}(V)) \neq \emptyset\).

    Now, suppose \(x \not\in \mathrm{cl}(U)\) and observe that \(x \in W \setminus \mathrm{cl}(U)\).
    Since \(h(x) \in V\) and \(x \not\in \mathrm{cl}(U)\), \(h(x) = g(x) \in V\).
    By the quasicontinuity of \(g\), it must be the case that
    \[(W \setminus \mathrm{cl}(U)) \cap \mathrm{int}(g^{-1}(V)) \neq \emptyset.\]
    Observe that \(\mathrm{int}(g^{-1}(V)) \setminus \mathrm{cl}(U) \subseteq h^{-1}(V)\),
    so \(W \cap \mathrm{int}(h^{-1}(V)) \neq \emptyset\).

    It follows that, in either case, \(h^{-1}(V)\) is semi-open, establishing the quasicontinuity
    of \(h\).

    For the remainder of the proof, combine \ref{charSub} and \ref{charQuasi} with
    Theorem \ref{thm:HolaHolyChar}.
\end{proof}
Note that, if \(F \subseteq X\) is closed and nowhere dense, \(\one_F\) is
not quasicontinuous since \(F\) is not semi-open.
Thus, the requirement that we use the closure of an open set in Lemma \ref{lem:UsefulFunction}\ref{charQuasi}
is, in general, necessary.
As a similar example, \(\one_{(0,1)\cup(1,2)}\) is not quasicontinuous at \(1\).

Moreover, consider \(U \in \mathscr T_X\) with \(\partial U \neq \emptyset\) and \(\mathrm{cl}(U) \neq X\).
Then \(f : X \to \mathbb R\) defined by
\[
    f(x) = \begin{cases}
    \one_{X \setminus U}(x), & x \in \mathrm{cl}(U);\\
    \one_{U}(x), & x \not\in \mathrm{cl}(U).
\end{cases}\]
is \(\mathbf 1_{\partial U}\).
So the requirement that \(f\) be constant (or something stronger than quasicontinuity, at least)
in Lemma \ref{lem:UsefulFunction}\ref{charQuasi}
is, in general, necessary.
\begin{example} \label{ex:UscoNotCont}
    Consider \(MU(\mathbb R,\mathbb R)\).
    By Lemma \ref{lem:UsefulFunction}, \(\Phi := \overline{\one}_{[0,1]}\) is minimal usco.
    However, \(\Phi\) is not continuous since
    \[\{0,1\} = \{ x\in \mathbb R : \Phi(x) \in [(-0.5,0.5),(0.5,1.5)] \}.\]
\end{example}
Hence, when \(Y\) is metrizable, studying the space \(MU(X,Y)\) is, in general, different than studying
the space of continuous functions into a metrizable space.

We will also be using the following corollary often.
\begin{corollary} \label{cor:PreUscoAgreeOpen}
    Suppose \(\Phi , \Psi \in MU(X,Y)\)
    and \(U \in \mathscr T_X\) are so that there exist \(f \in \mathrm{sel}(\Phi)\)
    and \(g\in \mathrm{sel}(\Psi)\) with the property that \(f\restriction_U = g\restriction_U\).
    Then \(\Phi\restriction_U = \Psi\restriction_U\).
\end{corollary}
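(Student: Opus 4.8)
The plan is to reduce everything to the graph-closure description of minimal usco maps supplied by Theorem \ref{thm:HolaHolyChar} and then to argue that, over an open set, the fiber of the closed graph is determined entirely by the restriction of a selection to that open set. First I would invoke Theorem \ref{thm:HolaHolyChar} (using that \(Y\) is regular): since \(\Phi\) and \(\Psi\) are minimal usco and \(f \in \mathrm{sel}(\Phi)\), \(g \in \mathrm{sel}(\Psi)\), the implication \ref{tfae:MinimalUsco}\(\Rightarrow\)\ref{tfae:AllSubQuasi} gives \(\Phi = \overline{f}\) and \(\Psi = \overline{g}\). Thus it suffices to prove that \(\overline{f}(x) = \overline{g}(x)\) for every \(x \in U\); equivalently, that the fibers of \(\mathrm{cl}\ \mathrm{gr}(f)\) and \(\mathrm{cl}\ \mathrm{gr}(g)\) over each \(x \in U\) agree.

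The crux is the following localization claim: for every \(x \in U\),
\[\{ y \in Y : \langle x, y \rangle \in \mathrm{cl}\ \mathrm{gr}(f) \} = \{ y \in Y : \langle x, y \rangle \in \mathrm{cl}\ \mathrm{gr}(f\restriction_U) \}.\]
The inclusion \(\supseteq\) is immediate from \(\mathrm{gr}(f\restriction_U) \subseteq \mathrm{gr}(f)\) and monotonicity of closure. For \(\subseteq\), I would fix \(\langle x, y \rangle \in \mathrm{cl}\ \mathrm{gr}(f)\) with \(x \in U\) and a basic open neighborhood \(W \times V\) of \(\langle x, y \rangle\) in \(X \times Y\). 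Because \(U\) is open and \(x \in U\), I may replace \(W\) by \(W \cap U\), so without loss of generality \(W \subseteq U\). Since \(\langle x, y \rangle\) lies in \(\mathrm{cl}\ \mathrm{gr}(f)\), there is \(x' \in W\) with \(\langle x', f(x') \rangle \in W \times V\); as \(x' \in W \subseteq U\), this point belongs to \(\mathrm{gr}(f\restriction_U)\), witnessing \(\langle x, y \rangle \in \mathrm{cl}\ \mathrm{gr}(f\restriction_U)\). The same argument applies verbatim to \(g\).

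Finally, I would combine the pieces. Since \(f\restriction_U = g\restriction_U\) by hypothesis, we have \(\mathrm{gr}(f\restriction_U) = \mathrm{gr}(g\restriction_U)\), hence equal closures and, in particular, equal fibers over each \(x \in U\). Applying the localization claim to both \(f\) and \(g\) then yields, for every \(x \in U\), \(\overline{f}(x) = \overline{g}(x)\), that is \(\Phi(x) = \Psi(x)\); this is exactly \(\Phi\restriction_U = \Psi\restriction_U\). I expect the only genuine obstacle to be the localization claim, and specifically the step where openness of \(U\) is used to shrink \(W\) into \(U\) — this is precisely where the hypothesis that \(U\) be open (rather than an arbitrary subset) is essential, mirroring the role openness plays in the quasicontinuity arguments of Lemma \ref{lem:UsefulFunction}.
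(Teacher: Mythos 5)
Your proposal is correct and follows essentially the same route as the paper's proof: both reduce to \(\Phi = \overline{f}\), \(\Psi = \overline{g}\) via Theorem \ref{thm:HolaHolyChar} and then exploit openness of \(U\) to shrink a basic neighborhood of \(\langle x,y\rangle\) into \(U\times Y\), where the two selections agree. The only cosmetic difference is organizational: the paper proves the single inclusion \(\Phi(x)\subseteq\Psi(x)\) directly (swapping \(f(z)\) for \(g(z)\) in the witnessing point) and invokes symmetry, whereas you factor the identical neighborhood-shrinking step through a localization claim about \(\mathrm{cl}\ \mathrm{gr}(f\restriction_U)\) and apply it to both selections.
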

\begin{proof}
    We will show that \(\Phi(x) \subseteq \Psi(x)\) for each \(x \in U\).
    By symmetry, this will establish that \(\Phi(x) = \Psi(x)\) for each \(x \in U\).

    So let \(x \in U\) and \(y \in \Phi(x)\) which, by Theorem \ref{thm:HolaHolyChar}
    means that \(y \in \overline{f}(x)\).
    Now, consider any neighborhood \(V \times W\) of \(\langle x, y \rangle\).
    Without loss of generality, we may assume that \(V \subseteq U\).
    Then we can find \(\langle z, f(z) \rangle \in V \times W\).
    Since \(z \in U\), \(f(z) = g(z)\) and, as
    \(V \times W\) was arbitrary, we see that \(\langle x, y \rangle \in \mathrm{cl}\ \mathrm{gr}(g)\).
    Hence, \(y \in \overline{g}(x) = \Psi(x)\).
\end{proof}
\begin{corollary} \label{cor:UscoAgreeOpen}
    If \(A \subseteq X\) is non-empty, \(U, V \in \mathscr T_X\) are so that
    \(A \subseteq V \subseteq \mathrm{cl}(V) \subseteq U\),
    \(\Phi \in MU(X,Y)\), and \(f \in \mathrm{sel}(\Phi)\), then, for \(y_0 \in Y\), \(g : X \to Y\) defined by
    \[
        g(x) = \begin{cases} y_0, & x \in \mathrm{cl}(X \setminus \mathrm{cl}(V));\\
        f(x), & \mathrm{otherwise},\end{cases}
    \]
    has the property that \(\Psi := \overline{g} \in MU(X,Y)\), \(\langle \Phi, \Psi \rangle \in \mathbf W(A,E)\)
    for any entourage \(E\) of \(Y\), and \(g[X \setminus U] = \{y_0\}\).
\end{corollary}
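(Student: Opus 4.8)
The plan is to assemble all three conclusions from the machinery already in place, principally Lemma \ref{lem:UsefulFunction} and Corollary \ref{cor:PreUscoAgreeOpen}. The key observation is that \(g\) is exactly an instance of the function \(h\) produced by Lemma \ref{lem:UsefulFunction}: setting \(W = X \setminus \mathrm{cl}(V)\), the set on which \(g\) takes the constant value \(y_0\) is precisely \(\mathrm{cl}(W)\), and off of \(\mathrm{cl}(W)\) the function \(g\) agrees with \(f\). So the roles are matched by taking the lemma's constant function to be the constant map \(y_0\), its quasicontinuous function to be our \(f\), and its open set to be \(W\).

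First I would check that \(W \in \mathscr T_X\), so that Lemma \ref{lem:UsefulFunction} applies. Since \(V\) is non-empty, \(\mathrm{cl}(V) \neq \emptyset\) and hence \(W \neq X\); since \(\mathrm{cl}(V) \subseteq U \subsetneq X\), we have \(\emptyset \neq X \setminus U \subseteq W\), so \(W\) is non-empty. Because \(f \in \mathrm{sel}(\Phi)\) with \(\Phi\) minimal usco, Theorem \ref{thm:HolaHolyChar} guarantees that \(f\) is both subcontinuous and quasicontinuous, while the constant function \(y_0\) is of course constant. The final clause of Lemma \ref{lem:UsefulFunction} then gives that \(g\) is subcontinuous and quasicontinuous, whence \(\Psi = \overline{g} \in MU(X,Y)\). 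This settles the first assertion.

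For the membership \(\langle \Phi, \Psi \rangle \in \mathbf W(A,E)\), I would reduce it to the equality \(\Phi(x) = \Psi(x)\) for every \(x \in A\). Indeed, since every entourage \(E\) contains \(\Delta_Y\), each compact \(K\) satisfies \(K \subseteq E[K]\), so \(\Phi(x) = \Psi(x)\) forces \(\langle \Phi(x), \Psi(x) \rangle \in hE\), which is exactly the defining condition of \(\mathbf W(A,E) = \mathbf U(A,hE)\). To obtain \(\Phi = \Psi\) on \(A\), I note that \(A \subseteq V\) and I would establish \(\Phi\restriction_V = \Psi\restriction_V\) by invoking Corollary \ref{cor:PreUscoAgreeOpen} with the open set \(V \in \mathscr T_X\). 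Its hypotheses hold: \(g\) is a selection of \(\Psi = \overline{g}\), since \(\langle x, g(x) \rangle \in \mathrm{gr}(g) \subseteq \mathrm{cl}\ \mathrm{gr}(g)\) gives \(g(x) \in \overline{g}(x)\); and \(f\restriction_V = g\restriction_V\). This last equality is where the nesting \(A \subseteq V \subseteq \mathrm{cl}(V) \subseteq U\) does its work, and it is the one point deserving care: \(V\) is an open set disjoint from \(X \setminus \mathrm{cl}(V)\), hence \(V \cap \mathrm{cl}(X \setminus \mathrm{cl}(V)) = \emptyset\), so every \(x \in V\) falls into the ``otherwise'' case of the definition of \(g\) and \(g(x) = f(x)\).

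Finally, for \(g[X \setminus U] = \{y_0\}\), I would observe that \(X \setminus U\) is non-empty (as \(U \subsetneq X\)) and that \(X \setminus U \subseteq X \setminus \mathrm{cl}(V) \subseteq \mathrm{cl}(X \setminus \mathrm{cl}(V))\), so \(g\) takes the value \(y_0\) at each point of \(X \setminus U\). Apart from the elementary disjointness fact \(V \cap \mathrm{cl}(X \setminus \mathrm{cl}(V)) = \emptyset\) used above, the entire argument is bookkeeping that invokes the cited results, so I do not anticipate any substantial obstacle.
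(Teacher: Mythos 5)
Your proposal is correct and follows essentially the same route as the paper's own proof: Lemma \ref{lem:UsefulFunction} together with Theorem \ref{thm:HolaHolyChar} for minimality of \(\overline{g}\), the disjointness \(V \cap \mathrm{cl}(X \setminus \mathrm{cl}(V)) = \emptyset\) feeding into Corollary \ref{cor:PreUscoAgreeOpen} to get \(\Phi\restriction_A = \Psi\restriction_A\), and the containment \(X \setminus U \subseteq \mathrm{cl}(X \setminus \mathrm{cl}(V))\) for the last claim. The extra checks you supply (that \(X \setminus \mathrm{cl}(V) \in \mathscr T_X\), and that pointwise equality of values gives membership in \(\mathbf W(A,E)\) since \(\Delta_Y \subseteq E\)) are details the paper leaves implicit, and they are sound.
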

\begin{proof}
    Note that Theorem \ref{thm:HolaHolyChar} and Lemma \ref{lem:UsefulFunction}
    imply that \(g\) is subcontinuous and quasicontinuous.
    Then \(\Psi := \overline{g} \in MU(X,Y)\).

    Since \(V\) is open, \(\mathrm{cl}(X \setminus \mathrm{cl}(V)) \subseteq X \setminus V\).
    Then \(g(x) = f(x)\) for all \(x \in V\).
    By Corollary \ref{cor:PreUscoAgreeOpen}, we see that \(\Phi\restriction_A = \Psi\restriction_A\)
    as \(A \subseteq V\).
    Also, since \(X \setminus U \subseteq X \setminus \mathrm{cl}(V) \subseteq \mathrm{cl}(X \setminus \mathrm{cl}(V))\),
    we see that \(g[X \setminus U] = \{y_0\}\).
\end{proof}

As a final note in this section,
we offer the following generalization of \cite[Lemma 3.1]{HolaHoly2016} to general uniform spaces.
Recall that Lemma \ref{lem:ClosedEntourages} allows us to restrict our attention
to closed entourages.
\begin{corollary} \label{cor:AlmostLikeContinuous}
    Let \(X\) be a space and \(Y\) be a uniform space.
  If \(\Phi,\Psi \in MU(X,Y)\), a closed entourage \(E\) of \(Y\), and a dense \(D \subseteq X\) are so that
  \(\langle \Phi(x),\Psi(x) \rangle \in hE\) for all \(x \in D\), then \(\langle \Phi(x), \Psi(x) \rangle \in hE\)
  for all \(x \in X\).
\end{corollary}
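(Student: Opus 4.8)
The plan is to use the Holá--Holý characterization (Theorem \ref{thm:HolaHolyChar}) to replace each set-valued map by an ordinary quasicontinuous selection, and then to transport the closeness from \(D\) to an arbitrary point by a net argument that leans on the density of \(D\). Since \(hE\) is defined by the two inclusions \(\Phi(x) \subseteq E[\Psi(x)]\) and \(\Psi(x) \subseteq E[\Phi(x)]\), and the hypothesis supplies both of these on \(D\), it suffices by the symmetry \(\Phi \leftrightarrow \Psi\) to prove that \(\Phi(x) \subseteq E[\Psi(x)]\) for every \(x \in X\). So I would fix \(x \in X\) and \(y \in \Phi(x)\) and aim to produce \(z \in \Psi(x)\) with \(\langle z, y \rangle \in E\). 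Note that regularity of \(Y\), which is automatic for uniform spaces, is what licenses the use of Theorem \ref{thm:HolaHolyChar} here.

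By Theorem \ref{thm:HolaHolyChar} I fix a selection \(f \in \mathrm{sel}(\Phi)\); then \(f\) is quasicontinuous and \(\Phi = \overline{f}\), so \(y \in \Phi(x)\) means \(\langle x, y \rangle \in \mathrm{cl}\,\mathrm{gr}(f)\). The first key step is to build a net \(\langle x_\lambda : \lambda \in \Lambda \rangle\) in \(D\) with \(x_\lambda \to x\) and \(f(x_\lambda) \to y\). Given open \(V \ni x\) and open \(W \ni y\), density of the graph gives some \(x' \in V\) with \(f(x') \in W\); quasicontinuity of \(f\), i.e.\ semi-openness of \(f^{-1}(W)\), then yields a nonempty open \(O \subseteq V\) with \(f[O] \subseteq W\) (take \(O = V \cap \mathrm{int}(f^{-1}(W))\)), and density of \(D\) produces a point of \(O \cap D\). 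Indexing by the product of the neighborhood filters of \(x\) and of \(y\) assembles these into the desired net inside \(D\). This relocation of graph-approximating points into \(D\) is the heart of the argument and the step I expect to be the main obstacle, since it is exactly where quasicontinuity (rather than mere density of the graph) is indispensable.

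With the net in hand, I would use the hypothesis pointwise: for each \(\lambda\), since \(x_\lambda \in D\) and \(f(x_\lambda) \in \Phi(x_\lambda) \subseteq E[\Psi(x_\lambda)]\), I may choose \(z_\lambda \in \Psi(x_\lambda)\) with \(\langle z_\lambda, f(x_\lambda) \rangle \in E\). It remains to extract a limit of \(\langle z_\lambda \rangle\), and here I invoke the usco property of \(\Psi\): because \(\Psi\) is upper semicontinuous with compact values and \(x_\lambda \to x\), the net \(\langle z_\lambda \rangle\) has an accumulation point \(z \in \Psi(x)\). I would prove this inline by the standard finite-subcover argument: if no point of the compact set \(\Psi(x)\) accumulated \(\langle z_\lambda \rangle\), then finitely many neighborhoods that each eventually avoid the net would cover \(\Psi(x)\), giving an open \(O \supseteq \Psi(x)\) eventually avoided by \(\langle z_\lambda \rangle\); but \(\Psi^\leftarrow(O)\) is open and contains \(x\), so \(x_\lambda \in \Psi^\leftarrow(O)\) eventually forces \(z_\lambda \in O\), a contradiction.

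Finally, passing to a subnet along which \(z_\lambda \to z\) (and still \(f(x_\lambda) \to y\)), the pairs \(\langle z_\lambda, f(x_\lambda) \rangle\) all lie in \(E\) and converge to \(\langle z, y \rangle\) in \(Y^2\); since \(E\) is closed, \(\langle z, y \rangle \in E\). This is the one place the closedness hypothesis on \(E\) is used, and Lemma \ref{lem:ClosedEntourages} is what makes restricting to closed entourages harmless in applications. Thus \(y \in E[z] \subseteq E[\Psi(x)]\), which establishes \(\Phi(x) \subseteq E[\Psi(x)]\); the reverse inclusion \(\Psi(x) \subseteq E[\Phi(x)]\) follows verbatim with \(\Phi\) and \(\Psi\) interchanged, using the other inclusion furnished by the hypothesis on \(D\), and the two together give \(\langle \Phi(x), \Psi(x) \rangle \in hE\).
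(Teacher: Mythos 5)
Your proof is correct, and it takes a genuinely different route from the paper's. The paper argues globally: it defines the set-valued map \(F(x) = E[\Phi(x)]\), shows \(\mathrm{gr}(F)\) is closed (by the same accumulation-point fact you prove inline: a net \(w_\lambda \in \Phi(x_\lambda)\) with \(x_\lambda \to x\) accumulates at a point of \(\Phi(x)\), and closedness of \(E\) passes the entourage relation to the limit), and then invokes the densely-defined-selection clause of Theorem \ref{thm:HolaHolyChar}: taking \(g : D \to Y\) with \(g(x) \in \Psi(x)\) on \(D\) and \(\Psi = \overline{g}\), the hypothesis gives \(\mathrm{gr}(g) \subseteq \mathrm{gr}(F)\), so \(\mathrm{cl}\,\mathrm{gr}(g) \subseteq \mathrm{gr}(F)\), i.e.\ \(\Psi(x) \subseteq E[\Phi(x)]\) for all \(x\), with the other inclusion by symmetry. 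Your argument is pointwise: you fix \(\langle x, y \rangle\) in \(\mathrm{cl}\,\mathrm{gr}(f)\) for a full selection \(f\) of \(\Phi\), use quasicontinuity of \(f\) plus density of \(D\) to relocate the approximating graph points into \(D\), and then combine the hypothesis on \(D\), the usco property of \(\Psi\) (for the accumulation point \(z \in \Psi(x)\)), and closedness of \(E\). The ingredients are the same — Theorem \ref{thm:HolaHolyChar}, accumulation points from compact values, closedness of \(E\) — but mirrored: per inclusion, the paper places the accumulation argument on \(\Phi\) and the dense-selection argument on \(\Psi\), while you place the quasicontinuous-selection argument on \(\Phi\) and the accumulation argument on \(\Psi\). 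What the paper's packaging buys is brevity, since the density of \(D\) is absorbed into the appeal to a densely defined selection; what yours buys is self-containedness: you only need the full-selection clauses of Theorem \ref{thm:HolaHolyChar}, you prove the usco accumulation fact from scratch, and your explicit quasicontinuity-plus-density step is precisely the verification that a selection restricted to the \emph{given} dense set \(D\) still has graph closure equal to the whole map — a point the paper's proof passes over when it asserts \(\Psi = \overline{g}\) for \(g\) defined on \(D\).
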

\begin{proof}
  Define \(F : X \to \wp(Y)\) by \(F(x) = E[\Phi(x)]\).

  We first show that the graph of \(F\) is closed.
  Suppose \(\langle x, y \rangle \in \mathrm{cl}\ \mathrm{gr}(F)\)
  and let \(\langle \langle x_\lambda , y_\lambda \rangle : \lambda \in \Lambda \rangle\)
  be a net in \(\mathrm{gr}(F)\) so that \(\langle x_\lambda, y_\lambda \rangle \to \langle x , y \rangle\).
  Since \(y_\lambda \in E[\Phi(x_\lambda)]\), we can let
  \(w_\lambda \in \Phi(x_\lambda)\) be so that \(y_\lambda \in E[w_\lambda]\).
  Observe that, since \(x_\lambda \to x\) and \(w_\lambda \in \Phi(x_\lambda)\) for each \(\lambda \in \Lambda\),
  by Theorem \ref{thm:HolaHolyChar},
  \(\langle w_\lambda : \lambda \in \Lambda \rangle\) has an accumulation point \(w \in \Phi(x)\).
  Since \(y_\lambda \to y\) and \(w\) is an accumulation point of \(\langle w_\lambda : \lambda \in \Lambda \rangle\),
  \(\langle w, y\rangle\) is an accumulation point of
  \(\langle \langle w_\lambda , y_\lambda \rangle : \lambda \in \Lambda \rangle\).
  Moreover, as \(\langle w_\lambda , y_\lambda \rangle \in E\) for all \(\lambda\in\Lambda\)
  and \(E\) is closed, we see that \(\langle w , y \rangle \in E\).
  Hence, \(y \in E[w] \subseteq E[\Phi(x)] = F(x)\).
  That is, \(\langle x, y \rangle \in \mathrm{gr}(F)\) which establishes that
  \(\mathrm{gr}(F)\) is closed.

  Now, by Theorem \ref{thm:HolaHolyChar}, we can let \(g : D \to Y\) be subcontinuous
  and quasicontinuous so that \(g(x) \in \Psi(x)\) for each \(x\in D\) and \(\Psi = \overline{g}\).
  Since \(\mathrm{gr}(F)\) is closed and \(\mathrm{gr}(g) \subseteq \mathrm{gr}(F)\),
  we see that \(\mathrm{cl}\ \mathrm{gr}(g) \subseteq \mathrm{gr}(F)\).
  That is, \(\Psi(x) \subseteq F(x) = E[\Phi(x)]\) for all \(x \in X\).

  A symmetric argument shows that \(\Phi(x) \subseteq E[\Psi(x)]\) for all \(x \in X\),
  finishing the proof.
\end{proof}

\section{Results}

We first note that the clustering version of Gruenhage's \(W\)-game is equivalent to
an entourage selection game in the realm of topological groups.
Such a result holds, for example, for \(C_{\mathcal A}(X)\) where we define
\(C_{\mathcal A}(X)\) to be the space of continuous real-valued functions on \(X\)
endowed with the topology of uniform convergence on \(\mathcal A\), an ideal of closed subsets of \(X\).
This topology is generated by the uniformity outlined in Definition \ref{def:UniformStructure}.

Recall that a topological group is a (multiplicative) group \(G\) with a topology for which the operations
\(\langle g, h \rangle \mapsto gh\), \(G^2 \to G\), and \(g \mapsto g^{-1}\), \(G \to G\),
are continuous.
Let \(\mathbf i\) be the identity element of \(G\).
Also recall that \[\left\{ \{ \langle g , h \rangle \in G^2 : gh^{-1} \in U \} : U \in \mathscr N_{G,\mathbf i} \right\}\]
is a basis for a uniformity on \(G\).
Let \(\mathscr E_G\) be the set of all entourages of \(G\) with the generated uniformity.
Also let
\[\mathbf \Omega_{\Delta} = \left\{ A \subseteq G^2 :
\{gh^{-1} : \langle g , h \rangle \in A \} \in \Omega_{G,\mathbf i} \right\}.\]
\begin{theorem} \label{thm:Group}
    If \(G\) is a (multiplicative) topological group and \(\mathbf i\) is the identity,
    then, for any \(g \in G\),
    \[\mathsf G_1(\mathscr N_{G,g} , \neg \Omega_{G,g})
    \equiv \mathsf G_1(\mathscr N_{G,\mathbf i} , \neg \Omega_{G,\mathbf i})
    \equiv \mathsf G_1(\mathscr E_G, \neg \mathbf \Omega_\Delta).\]
\end{theorem}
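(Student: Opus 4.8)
The plan is to prove each of the two equivalences separately, in each case establishing $\leq_{\mathrm{II}}$ in both directions with the translation machinery of Theorem \ref{TranslationTheorem}; since $\leq_{\mathrm{II}}$ is transitive and mutual domination yields $\equiv$, the displayed chain of equivalences then follows. Throughout, the relevant unions are $\bigcup \mathscr N_{G,x} \subseteq G$ and $\bigcup \mathscr E_G = G^2$, the latter because $G^2 \in \mathscr E_G$ by upward closure of the uniformity.

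First I would dispose of $\mathsf G_1(\mathscr N_{G,\mathbf i}, \neg\Omega_{G,\mathbf i}) \equiv \mathsf G_1(\mathscr N_{G,g}, \neg\Omega_{G,g})$ using the left-translation homeomorphism $L_g : G \to G$, $x \mapsto gx$, which carries $\mathbf i$ to $g$. To obtain $\leq_{\mathrm{II}}$ from left to right, set $\overleftarrow{T}_{\mathrm{I},n}(U) = g^{-1}U$ for $U \in \mathscr N_{G,g}$ (an open neighborhood of $\mathbf i$) and $\overrightarrow{T}_{\mathrm{II},n}(x,U) = gx$. Condition (i) holds since $x \in g^{-1}U$ forces $gx \in U$; condition (ii) holds because $L_g$ is a homeomorphism, so $\mathrm{cl}\{gx_n\} = L_g[\mathrm{cl}\{x_n\}]$ and hence $g \in \mathrm{cl}\{gx_n\} \iff \mathbf i \in \mathrm{cl}\{x_n\}$, which sends a set omitting $\mathbf i$ from its closure to one omitting $g$. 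Replacing $g$ by $g^{-1}$ supplies the reverse domination, giving the first equivalence.

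For $\mathsf G_1(\mathscr N_{G,\mathbf i}, \neg\Omega_{G,\mathbf i}) \equiv \mathsf G_1(\mathscr E_G, \neg\mathbf{\Omega}_{\Delta})$ I would use the base of basic entourages $E_U := \{\langle a,b\rangle \in G^2 : ab^{-1}\in U\}$ indexed by $U \in \mathscr N_{G,\mathbf i}$. To prove $\mathsf G_1(\mathscr E_G, \neg\mathbf{\Omega}_{\Delta}) \leq_{\mathrm{II}} \mathsf G_1(\mathscr N_{G,\mathbf i}, \neg\Omega_{G,\mathbf i})$, translate One's neighborhood $U$ backward to $E_U$ and Two's pair $\langle a,b\rangle$ forward to $ab^{-1}$; condition (i) is immediate from the definition of $E_U$, and condition (ii) is a tautology because, by the definition of $\mathbf{\Omega}_{\Delta}$, the set $\{\langle a_n, b_n\rangle : n\in\omega\}$ lies in $\mathbf{\Omega}_{\Delta}$ precisely when $\{a_n b_n^{-1} : n\in\omega\} \in \Omega_{G,\mathbf i}$. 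For the reverse domination, translate One's entourage $E$ backward to a chosen $U_E \in \mathscr N_{G,\mathbf i}$ with $E_{U_E} \subseteq E$ (available since the $E_U$ form a base) and Two's point $x$ forward to $\langle x, \mathbf i\rangle$, which lies in $E_{U_E} \subseteq E$ because $x\mathbf i^{-1} = x \in U_E$; condition (ii) again reduces to the defining equivalence $\{\langle x_n,\mathbf i\rangle\} \in \mathbf{\Omega}_{\Delta} \iff \{x_n\} \in \Omega_{G,\mathbf i}$.

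Each verification is essentially bookkeeping once the translation functions are named, and I expect no serious obstacle. The one point demanding care is to keep the three win conditions straight---clustering at $g$, clustering at $\mathbf i$, and the $\Delta$-clustering condition $\mathbf{\Omega}_{\Delta}$---and to observe that the non-canonical choice of $U_E$ in the entourage-to-neighborhood direction is harmless, since Theorem \ref{TranslationTheorem} permits arbitrary translation functions. The subtlest individual step is verifying condition (ii) for the second equivalence, where one must unwind the definition of $\mathbf{\Omega}_{\Delta}$; but this amounts to a direct translation between the quotients $a_n b_n^{-1}$ and the points played in the neighborhood game.
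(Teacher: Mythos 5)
Your proposal is correct and takes essentially the same route as the paper: the left-translation homeomorphism disposes of the first equivalence, and the second is proved by two applications of Theorem \ref{TranslationTheorem} with exactly the paper's translation functions (\(U \mapsto E_U\) paired with \(\langle a,b\rangle \mapsto ab^{-1}\) in one direction, and \(E \mapsto U_E\) with \(E_{U_E} \subseteq E\) paired with \(x \mapsto \langle x, \mathbf i\rangle\) in the other). The only cosmetic difference is that your forward map \(x \mapsto \langle x , \mathbf i \rangle\) is defined uniformly, whereas the paper uses the fallback value \(\langle \mathbf i , \mathbf i \rangle\) for illegal moves; this is immaterial since condition (i) only constrains legal selections.
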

\begin{proof}
    The equivalence
    \[\mathsf G_1(\mathscr N_{G,g} , \neg \Omega_{G,g}) \equiv \mathsf G_1(\mathscr N_{G,\mathbf i} , \neg \Omega_{G,\mathbf i})\]
    can be seen by using the homeomorphism \(x \mapsto gx\), \(G \to G\).

    We first show that \[\mathsf G_1(\mathscr N_{G,\mathbf i} , \neg \Omega_{G,\mathbf i}) \leq_{\mathrm{II}}
    \mathsf G_1(\mathscr E_G, \neg \mathbf \Omega_\Delta).\]
    Define \(\overleftarrow{T}_{\mathrm{I},n} : \mathscr E_G \to \mathscr N_{G,\mathbf i}\) so that
    \[\left\{\langle g, h \rangle \in G^2 : gh^{-1} \in \overleftarrow{T}_{\mathrm{I},n}(E) \right\} \subseteq E.\]

    We now define \(\overrightarrow{T}_{\mathrm{II},n} : G \times \mathscr E_G \to G^2\) in the following way.
    For \(g \in \overleftarrow{T}_{\mathrm{I},n}(E)\), let
    \(\overrightarrow{T}_{\mathrm{II},n}(g,E) = \langle g , \mathbf i \rangle\); otherwise,
    let \(\overrightarrow{T}_{\mathrm{II},n}(g,E) = \langle \mathbf i , \mathbf i \rangle\).
    By definition, we see that \(\overrightarrow{T}_{\mathrm{II},n}(g,E) \in E\) when
    \(g \in \overleftarrow{T}_{\mathrm{I},n}(E)\).

    Suppose \(\langle g_n : n \in \omega \rangle \in \prod_{n\in\omega} \overleftarrow{T}_{\mathrm{I},n}(E_n)\)
    for a sequence \(\langle E_n : n \in \omega \rangle\) of \(\mathscr E_G\) is so that
    \(\{ g_n : n \in \omega \} \not\in \Omega_{G,\mathbf i}\).
    Then let \(U \in \mathscr N_{G,\mathbf i}\) be so that \(U \cap \{ g_n : n \in \omega \} = \emptyset\).
    Notice that \(\overrightarrow{T}_{\mathrm{II},n}(g_n,E_n) \not\in \mathbf \Omega_\Delta\).
    Thus, Theorem \ref{TranslationTheorem} applies.

    We now show that \[\mathsf G_1(\mathscr E_G, \neg \mathbf \Omega_\Delta) \leq_{\mathrm{II}}
    \mathsf G_1(\mathscr N_{G,\mathbf i} , \neg \Omega_{G,\mathbf i}).\]
    Define \(\overleftarrow{T}_{\mathrm{II},n} : \mathscr N_{G,\mathbf i} \to \mathscr E_G\) by the rule
    \[\overleftarrow{T}_{\mathrm{II},n}(U) = \{ \langle g , h \rangle \in G^2 : gh^{-1} \in U \}\]
    and \(\overrightarrow{T}_{\mathrm{II},n} : G^2 \times \mathscr N_{G,\mathbf i} \to G\)
    by \(\overrightarrow{T}_{\mathrm{II},n}(\langle g,h \rangle, U) = gh^{-1}.\)
    Note that, if \(\langle g, h \rangle \in \overleftarrow{T}_{\mathrm{II},n}(U)\),
    \(\overrightarrow{T}_{\mathrm{II},n}(\langle g,h \rangle, U) \in U\).
    So suppose we have
    \[ \langle \langle g_n, h_n \rangle : n \in \omega \rangle \in \prod_{n\in\omega} \overleftarrow{T}_{\mathrm{II},n}(U_n)\]
    for a sequence \(\langle U_n : n \in \omega \rangle\) of \(\mathscr N_{G,\mathbf i}\) so that
    \(\{ \langle g_n, h_n \rangle : n \in \omega \} \not\in \mathbf \Omega_\Delta\).
    Evidently, \(\overrightarrow{T}_{\mathrm{II},n}(\langle g_n,h_n \rangle, U_n) \not\in \Omega_{G,\mathbf i}\).
    Again, Theorem \ref{TranslationTheorem} applies.
\end{proof}

For the remainder of the paper, we will be interested only in real set-valued functions;
so we will let \(MU(X) = MU(X,\mathbb R)\).
We also use, for \(\varepsilon > 0\), \[\Delta_\varepsilon = \{ \langle x, y \rangle \in \mathbb R^2 : |x-y| < \varepsilon \}.\]
For \(A \subseteq X\), we will use \(\mathbf U(A,\varepsilon) = \mathbf U(A,\Delta_\varepsilon)\)
and \(\mathbf W(A,\varepsilon) = \mathbf W(A , \Delta_\varepsilon)\).
For \(Y \subseteq \mathbb R\), let \(\mathbb B(Y,\varepsilon) = \bigcup_{y \in Y} B(y,\varepsilon)\) and
note that
\begin{align*}
    &\mathbf W(A, \varepsilon)\\
    &{\ } = \left\{ \langle \Phi , \Psi \rangle \in \mathbb K(\mathbb R)^X : (\forall x \in A)
    [\Phi(x) \subseteq \mathbb B(\Psi(x),\varepsilon) \wedge \Psi(x) \subseteq \mathbb B(\Phi(x),\varepsilon)] \right\}.
\end{align*}
Then, if \(\mathcal A\) is an ideal of closed subsets of \(X\),
we will use \(MU_{\mathcal A}(X)\) to denote the set \(MU(X)\) with the topology generated by the base
for a uniformity \(\{ \mathbf W(A,\varepsilon) : A \in\mathcal A, \varepsilon > 0\}\).
When \(\mathcal A = [X]^{<\omega}\), we use \(MU_p(X)\) and, when \(\mathcal A = K(X)\),
we use \(MU_k(X)\).
For \(\Phi \in MU(X)\), \(A \subseteq X\), and \(\varepsilon > 0\), we let \([\Phi; A , \varepsilon]
= \mathbf W(A,\varepsilon)[\Phi]\).
We will use \(\mathbf 0\) to denote the function that is constantly \(0\) when dealing
with real-valued functions and the function that is constantly \(\{0\}\) when dealing
with usco maps.

\begin{theorem} \label{thm:FirstEquivalence}
  Let \(X\) be regular and \(\mathcal A\) and \(\mathcal B\) be ideals of closed
  subsets of \(X\).
  Then,
  \begin{enumerate}[label=(\roman*)]
    \item \label{thm:RothbergerFirst}
    \(\mathsf G_1(\mathcal O_X(\mathcal A), \Lambda_X(\mathcal B))
    \leq_{\mathrm{II}} \mathsf G_1(\Omega_{MU_{\mathcal A}(X),\mathbf 0},\Omega_{MU_{\mathcal B}(X),\mathbf 0})\),
    \item \label{thm:RothbergerSecond}
    \(\mathsf G_1(\Omega_{MU_{\mathcal A}(X),\mathbf 0},\Omega_{MU_{\mathcal B}(X),\mathbf 0})
    \leq_{\mathrm{II}} \mathsf G_1(\mathscr D_{MU_{\mathcal A}(X)},\Omega_{MU_{\mathcal B}(X),\mathbf 0})\), and
    \item \label{thm:RothbergerThird}
    if \(X\) is \(\mathcal A\)-normal,
    \(\mathsf G_1(\mathscr D_{MU_{\mathcal A}(X)},\Omega_{MU_{\mathcal B}(X),\mathbf 0})
    \leq_{\mathrm{II}} \mathsf G_1(\mathcal O_X(\mathcal A), \Lambda_X(\mathcal B))\).
  \end{enumerate}
  Thus, if \(X\) is \(\mathcal A\)-normal, the three games are equivalent.
\end{theorem}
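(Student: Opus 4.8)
The plan is to prove the three $\leq_{\mathrm{II}}$ relations by building explicit translation functions and invoking Theorem \ref{TranslationTheorem} in each case; the final equivalence then follows by chaining the three inclusions with transitivity of $\leq_{\mathrm{II}}$ (and noting $\mathscr D_{MU_{\mathcal A}(X)} \supseteq \Omega_{MU_{\mathcal A}(X),\mathbf 0}$ handles part \ref{thm:RothbergerSecond} almost for free). The heart of the matter is to translate between open $\mathcal A$-covers of $X$ and basic neighborhoods of $\mathbf 0$ in the uniform-convergence topology on minimal usco maps, using Corollary \ref{cor:UscoAgreeOpen} to manufacture the minimal usco maps that witness clustering at $\mathbf 0$.

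\textbf{Part \ref{thm:RothbergerFirst}.}
For each $n$, One in the target game presents a set $\mathscr U_n \in \mathcal O_X(\mathcal A)$. I would define $\overleftarrow{T}_{\mathrm{I},n}(\mathscr U_n)$ to be a neighborhood-trace of $\mathbf 0$ in $\Omega_{MU_{\mathcal A}(X),\mathbf 0}$ coded by $\mathscr U_n$: given $U \in \mathscr U_n$ and $A \in \mathcal A$ with $A \subseteq U$, use $\mathcal A$-normality-free regularity to find $V$ with $A \subseteq V \subseteq \mathrm{cl}(V) \subseteq U$ (available since $\mathcal A$ consists of closed sets and, for the forward direction, only regularity is needed), then Corollary \ref{cor:UscoAgreeOpen} produces a minimal usco map $\Psi_U$ with $\langle \mathbf 0, \Psi_U\rangle \in \mathbf W(A,\varepsilon)$ that is $\{1\}$ off $U$; the collection of such $\Psi_U$ clusters at $\mathbf 0$, so it lies in $\Omega_{MU_{\mathcal A}(X),\mathbf 0}$. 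The forward translation $\overrightarrow{T}_{\mathrm{II},n}$ sends a selected open set $U$ to the witnessing map $\Psi_U$. Condition (i) of Theorem \ref{TranslationTheorem} is the statement $\Psi_U \in \overleftarrow{T}_{\mathrm{I},n}(\mathscr U_n)$; condition (ii) requires that if the selected $U_n$ form a set in $\Lambda_X(\mathcal B)$, then $\{\Psi_{U_n}\} \in \Omega_{MU_{\mathcal B}(X),\mathbf 0}$, i.e. $\mathbf 0 \in \mathrm{cl}\{\Psi_{U_n}\}$ in the $\mathcal B$-topology. This last clustering is exactly where $\Lambda_X(\mathcal B)$ is used: given $B \in \mathcal B$ and $\varepsilon>0$, infinitely many $U_n \supseteq B$ means infinitely many $\Psi_{U_n}$ agree with $\mathbf 0$ on $B$, hence are $\mathbf W(B,\varepsilon)$-close to $\mathbf 0$.

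\textbf{Parts \ref{thm:RothbergerSecond} and \ref{thm:RothbergerThird}.}
Part \ref{thm:RothbergerSecond} is immediate from Theorem \ref{TranslationTheorem} with the inclusion $\Omega_{MU_{\mathcal A}(X),\mathbf 0} \subseteq \mathscr D_{MU_{\mathcal A}(X)}$ (every set clustering at $\mathbf 0$ is dense since $MU_{\mathcal A}(X)$ is homogeneous enough, or directly by taking identity translations), so the forward maps are identities and both hypotheses hold trivially. For \ref{thm:RothbergerThird} I would reverse the dictionary: One presents a dense set $\mathcal D_n$ of minimal usco maps, and I must extract an open $\mathcal A$-cover. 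Given $A \in \mathcal A$ and $\varepsilon \in (0,1)$, density gives $\Psi \in \mathcal D_n \cap [\mathbf 0; A, \varepsilon]$, and I attach to such a $\Psi$ the open set $U_\Psi = \{x : \Psi(x) \subseteq B(0,1)\}$ (or a suitable upper-Vietoris preimage), which is open by upper semicontinuity and contains $A$; here $\mathcal A$-normality guarantees the $U_\Psi$ can be arranged to form a genuine $\mathcal A$-cover. Condition (ii) then transfers a $\Omega_{MU_{\mathcal B}(X),\mathbf 0}$ selection back to a $\Lambda_X(\mathcal B)$-cover.

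\textbf{Anticipated obstacle.}
The main difficulty is the reverse direction \ref{thm:RothbergerThird}: verifying that the open sets $U_\Psi$ extracted from maps near $\mathbf 0$ genuinely cover each $A \in \mathcal A$ and that the $\Lambda$/$\Omega$ correspondence survives the passage back, since a map being $\mathbf W(A,\varepsilon)$-close to $\mathbf 0$ controls $\Psi$ only on $A$, not globally. This is precisely why $\mathcal A$-normality is invoked — it lets me interpose a set $V$ with $A \subseteq V \subseteq \mathrm{cl}(V) \subseteq U_\Psi$ so that the cofinal/infinite membership conditions defining $\Lambda_X(\mathcal B)$ are recovered from the clustering of the usco maps at $\mathbf 0$ in the $\mathcal B$-topology. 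Keeping the $\varepsilon$-thresholds and the closed-entourage bookkeeping (via Corollary \ref{cor:AlmostLikeContinuous}) coherent across the translation is the technical crux.
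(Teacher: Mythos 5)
There is a genuine gap, and it is not merely one of labeling: your translations run in the opposite direction to what Theorem \ref{TranslationTheorem} and the definition of \(\leq_{\mathrm{II}}\) require. To prove part \ref{thm:RothbergerFirst}, the backward map \(\overleftarrow{T}_{\mathrm{I},n}\) must convert an \emph{arbitrary} move of One in \(\mathsf G_1(\Omega_{MU_{\mathcal A}(X),\mathbf 0},\Omega_{MU_{\mathcal B}(X),\mathbf 0})\) --- an arbitrary set \(\mathscr F\) of minimal usco maps clustering at \(\mathbf 0\) --- into an \(\mathcal A\)-cover, and Two's response in that game must then be an element \emph{of \(\mathscr F\)}. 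Your scheme goes the other way: it starts from a cover \(\mathscr U\) and manufactures maps \(\Psi_U\) via Corollary \ref{cor:UscoAgreeOpen}, so the strategy it builds for Two plays maps that in general do not belong to One's actual moves; it is not a legal strategy in the target game, and it gives no way at all to handle a cluster set \(\mathscr F\) not of your special form. The missing idea is the preimage translation: given \(\mathscr F\), form the \(\mathcal A\)-cover \(\{\Phi^{\leftarrow}[(-2^{-n},2^{-n})] : \Phi \in \mathscr F\}\) (it is an \(\mathcal A\)-cover precisely because \(\mathscr F\) clusters at \(\mathbf 0\)), and send Two's selected open set back to a map \(\Phi \in \mathscr F\) whose preimage it is. This also explains why part \ref{thm:RothbergerFirst} holds for merely regular \(X\): your route needs to interpose \(A \subseteq V \subseteq \mathrm{cl}(V) \subseteq U\) for arbitrary \(A \in \mathcal A\), and regularity alone does not provide this for a general ideal of closed sets --- that is exactly what \(\mathcal A\)-normality is for, and it is why the manufactured-map construction belongs in part \ref{thm:RothbergerThird}, not part \ref{thm:RothbergerFirst}.

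Even taken on its own terms, your verification of condition (ii) in part \ref{thm:RothbergerFirst} asserts a false implication: from \(B \subseteq U_n\) you conclude that \(\Psi_{U_n}\) agrees with \(\mathbf 0\) on \(B\). But the map produced by Corollary \ref{cor:UscoAgreeOpen} equals \(\{0\}\) only near the set \(A_n \in \mathcal A\) used in its construction, while \(1 \in \Psi_{U_n}(x)\) for every \(x \in U_n \setminus \mathrm{cl}(V_{A_n})\); any point of \(B\) in that region destroys \(\mathbf W(B,\varepsilon)\)-closeness to \(\mathbf 0\) for \(\varepsilon \leq 1\). The correct implication runs the other way --- if \(\Psi \in [\mathbf 0; B, 1]\) and some selection of \(\Psi\) equals \(1\) off \(U\), then \(B \subseteq U\) --- and that is the implication the paper uses in part \ref{thm:RothbergerThird}, where One's covers are translated \emph{backwards} to \emph{dense} sets of maps that agree with \(\max\Phi\) near a given \(A\) and equal \(1\) off some \(U \in \mathscr U\); establishing density (approximating arbitrary \(\Phi\), not just \(\mathbf 0\)) is the step your sketch of \ref{thm:RothbergerThird} never addresses, since it again points the translation the wrong way. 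Finally, part \ref{thm:RothbergerSecond} rests on the inclusion \(\mathscr D_{MU_{\mathcal A}(X)} \subseteq \Omega_{MU_{\mathcal A}(X),\mathbf 0}\) (every dense set clusters at \(\mathbf 0\)), not the reverse inclusion you assert, which is false: a nontrivial sequence converging to \(\mathbf 0\) clusters at \(\mathbf 0\) but is nowhere near dense, homogeneity notwithstanding.
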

\begin{proof}
  We first address \ref{thm:RothbergerFirst}.
  Fix some \(\mathscr U_0 \in \mathcal O_X(\mathcal A)\) and let \(W_{\Phi,n} = \Phi^\leftarrow[(-2^{-n},2^{-n})]\)
  for \(\Phi \in MU(X)\) and \(n \in \omega\).
  Define \(\overleftarrow{T}_{\mathrm{I},n} : \Omega_{MU_{\mathcal A}(X),\mathbf 0} \to \mathcal O_X(\mathcal A)\)
  by the rule
  \[\overleftarrow{T}_{\mathrm{I},n}(\mathscr F) =
  \begin{cases}
    \{ W_{\Phi,n} : \Phi \in \mathscr F \}, & (\forall \Phi \in \mathscr F)\ W_{\Phi,n} \neq X;\\
    \mathscr U_0, & \mathrm{otherwise}.
  \end{cases}\]
  To see that \(\overleftarrow{T}_{\mathrm{I},n}\) is defined, let \(\mathscr F \in \Omega_{MU_{\mathcal A}(X),\mathbf 0}\)
  be so that \(W_{\Phi,n} \neq X\) for every \(\Phi \in \mathscr F\).
  Let \(A \in \mathcal A\) be arbitrary and choose \(\Phi \in [\mathbf 0;A,2^{-n}] \cap \mathscr F\).
  It follows that \(A \subseteq W_{\Phi,n}\).
  Hence, \(\overleftarrow{T}_{\mathrm{I},n}(\mathscr F) \in \mathcal O_X(\mathcal A)\).

  We now define
  \[\overrightarrow{T}_{\mathrm{II},n} : \mathscr T_X \times \Omega_{MU_{\mathcal A}(X),\mathbf 0} \to MU(X)\]
  in the following way.
  Let \[\mathfrak T_n = \{ \mathscr F \in \Omega_{MU_{\mathcal A}(X),\mathbf 0} : (\exists \Phi \in \mathscr F)\ W_{\Phi,n} = X \}\]
  and \(\mathfrak T_n^\star = \Omega_{MU_{\mathcal A}(X),\mathbf 0} \setminus \mathfrak T_n\).
  For each \(\langle U ,\mathscr F \rangle \in \mathscr T_X \times \mathfrak T_n\), let
  \(\overrightarrow{T}_{\mathrm{II},n}(U,\mathscr F) \in \mathscr F\) be so that
  \(W_{\overrightarrow{T}_{\mathrm{II},n}(U,\mathscr F),n} = X\).
  For \(\langle U, \mathscr F \rangle \in \mathscr T_X \times \mathfrak T_n^\star\) so that
  \(U \in \overleftarrow{T}_{\mathrm{I},n}(\mathscr F)\), let \(\overrightarrow{T}_{\mathrm{II},n}(U,\mathscr F) \in \mathscr F\)
  be so that \(U = W_{\overrightarrow{T}_{\mathrm{II},n}(U,\mathscr F),n}\).
  For \(\langle U, \mathscr F \rangle \in \mathscr T_X \times \mathfrak T_n^\star\) so that
  \(U \not\in \overleftarrow{T}_{\mathrm{I},n}(\mathscr F)\), let \(\overrightarrow{T}_{\mathrm{II},n}(U,\mathscr F) = \mathbf 0\).
  By construction, if \(U \in \overleftarrow{T}_{\mathrm{I},n}(\mathscr F)\),
  then \(\overrightarrow{T}_{\mathrm{II},n}(U,\mathscr F) \in \mathscr F\).

  To finish this application of Theorem \ref{TranslationTheorem}, assume that we have
  \[\langle U_n : n \in \omega \rangle \in \prod_{n\in\omega} \overleftarrow{T}_{\mathrm{I},n}(\mathscr F_n)\]
  for some sequence \(\langle \mathscr F_n : n \in \omega \rangle\) of \(\Omega_{MU_{\mathcal B}(X),\mathbf 0}\)
  so that \(\{ U_n : n \in \omega \} \in \Lambda_X(\mathcal B)\).
  For each \(n \in \omega\), let \(\Phi_n = \overrightarrow{T}_{\mathrm{II},n}(U_n,\mathscr F_n)\).
  Now, let \(B \in \mathcal B\) and \(\varepsilon > 0\) be arbitrary.
  Choose \(n \in \omega\) so that \(2^{-n} < \varepsilon\) and \(B \subseteq U_n\).
  If \(\mathscr F_n \in \mathfrak T_n\), then \(\Phi_n\) has the property that \(X = \Phi_n^\leftarrow[(-2^{-n},2^{-n})]\);
  hence, \(\Phi_n \in [\mathbf 0; B, \varepsilon]\).
  Otherwise, \(B \subseteq U_n = \Phi_n^\leftarrow[(-2^{-n},2^{-n})]\) which also implies that
  \(\Phi_n \in [\mathbf 0; B , \varepsilon]\).
  Thus, \(\{ \Phi_n : n \in \omega \} \in \Omega_{MU_{\mathcal B}(X),\mathbf 0}\).

  \ref{thm:RothbergerSecond} holds since \(\mathscr D_{MU_{\mathcal A}(X)} \subseteq
  \Omega_{MU_{\mathcal A}(X),\mathbf 0}\).

  Lastly, we address \ref{thm:RothbergerThird}.
  We define
  \(\overset{\leftarrow}{T}_{\mathrm I,n} : \mathcal O_X(\mathcal A) \to \mathscr D_{MU_{\mathcal A}(X)}\)
  by the rule \[\overset{\leftarrow}{T}_{\mathrm I,n}(\mathscr U)
  = \{ \Phi \in MU(X) : (\exists U \in \mathscr U)(\exists f \in \mathrm{sel}(\Phi))\ f[X \setminus U] = \{1\} \}.\]
  To see that \(\overset{\leftarrow}{T}_{\mathrm I,n}\) is defined, let
  \(\mathscr U \in \mathcal O_X(\mathcal A)\) and consider a basic open set
  \([\Phi ; A , \varepsilon ]\).
  Then let \(U \in \mathscr U\) be so that \(A \subseteq U\) and, by \(\mathcal A\)-normality,
  let \(V\) be open so that \(A \subseteq V \subseteq \mathrm{cl}(V) \subseteq U\).
  Define \(f : X \to \mathbb R\) by the rule
  \[f(x) =
  \begin{cases}
    1, & x \in \mathrm{cl}(X \setminus \mathrm{cl}(V));\\
    \max \Phi(x), & \mathrm{otherwise}.
  \end{cases}
  \]
  By Corollary \ref{cor:UscoAgreeOpen},
  \(\overline{f} \in [\Phi; A, \varepsilon] \cap \overset{\leftarrow}{T}_{\mathrm I,n}(\mathscr U)\).
  Hence, \(\overset{\leftarrow}{T}_{\mathrm I,n}(\mathscr U) \in \mathscr D_{MU_{\mathcal A}(X)}\).

  We define \(\overset{\rightarrow}{T}_{\mathrm{II},n} : MU(X) \times \mathcal O_X(\mathcal A) \to \mathscr T_X\)
  in the following way.
  Fix some \(U_0 \in \mathscr T_X\).
  For \(\langle \Phi , \mathscr U \rangle \in MU(X) \times \mathcal O_X(\mathcal A)\),
  if \[\{ U \in \mathscr U : (\exists f \in \mathrm{sel}(\Phi))\ f[X \setminus U] = \{1\} \} \neq \emptyset,\]
  let \(\overset{\rightarrow}{T}_{\mathrm{II},n}(\Phi, \mathscr U) \in \mathscr U\) be so that
  there exists \(f \in \mathrm{sel}(\Phi)\) with the property that
  \(f[X \setminus \overset{\rightarrow}{T}_{\mathrm{II},n}(\Phi, \mathscr U)] = \{1\};\)
  otherwise, let \(\overset{\rightarrow}{T}_{\mathrm{II},n}(\Phi, \mathscr U) = U_0\).
  By construction, if \(\Phi \in \overset{\leftarrow}{T}_{\mathrm I,n}(\mathscr U)\),
  then \(\overset{\rightarrow}{T}_{\mathrm{II},n}(\Phi, \mathscr U) \in \mathscr U\).

  Suppose we have
  \[\langle \Phi_n : n \in \omega \rangle \in \prod_{n\in\omega} \overset{\leftarrow}{T}_{\mathrm I,n}(\mathscr U_n)\]
  for a sequence \(\langle \mathscr U_n : n \in \omega\rangle\) of \(\mathcal O_X(\mathcal A)\)
  with the property that \(\{ \Phi_n : n \in \omega \} \in \Omega_{MU_{\mathcal B}(X),\mathbf 0}\).
  For each \(n \in \omega\), let \(U_n = \overset{\rightarrow}{T}_{\mathrm{II},n}(\Phi_n, \mathscr U_n)\).
  Since \(\mathcal B\) is an ideal of sets, we need only show that \(\langle U_n : n \in \omega \rangle\)
  is a \(\mathcal B\)-cover.
  So let \(B \in \mathcal B\) be arbitrary and let \(n \in \omega\) be so that
  \(\Phi_n \in [\mathbf 0; B, 1]\).
  Then we can let \(f \in \mathrm{sel}(\Phi_n)\) be so that \(f[X \setminus U_n] = \{1\}\).
  Since \(\Phi_n \in [\mathbf 0;B,1]\), we see that, for each \(x \in B\), \(f(x) \in \Phi_n(x) \subseteq (-1,1)\).
  Hence, \(B \cap (X \setminus U_n) = \emptyset\), which is to say that \(B \subseteq U_n\).
  So Theorem \ref{TranslationTheorem} applies.
\end{proof}

We now establish some relationships between these games and games on the space of continuous real-valued functions.
\begin{corollary} \label{cor:Rothberger}
    Let \(\mathcal A\) and \(\mathcal B\) be ideals of closed subsets of \(X\) and suppose that
    \(X\) is \(\mathcal A\)-normal.
    Then
    \begin{align*}
        \mathcal G := \mathsf G_1(\mathcal O_X(\mathcal A), \mathcal O_X(\mathcal B))
        &\equiv \mathsf G_1(\Omega_{MU_{\mathcal A}(X),\mathbf 0},\Omega_{MU_{\mathcal B}(X),\mathbf 0})\\
        &\equiv \mathsf G_1(\mathscr D_{MU_{\mathcal A}(X)},\Omega_{MU_{\mathcal B}(X),\mathbf 0}),
    \end{align*}
    \begin{align*}
        \mathcal H := \mathsf G_1(\mathscr N_X[\mathcal A], \neg \mathcal O_X(\mathcal B))
        &\equiv \mathsf G_1(\mathscr N_{MU_{\mathcal A}(X),\mathbf 0}, \neg \Omega_{MU_{\mathcal B}(X),\mathbf 0})\\
        &\equiv \mathsf G_1(\mathscr T_{MU_{\mathcal A}(X)}, \neg \Omega_{MU_{\mathcal B}(X),\mathbf 0}),
    \end{align*}
    and \(\mathcal G\) is dual to \(\mathcal H\).
    If \(X\) is functionally \(\mathcal A\)-normal, then
    \begin{align*}
        \mathsf G_1(\mathcal O_X(\mathcal A), \mathcal O_X(\mathcal B))
        &\equiv \mathsf G_1(\Omega_{MU_{\mathcal A}(X),\mathbf 0},\Omega_{MU_{\mathcal B}(X),\mathbf 0})\\
        &\equiv \mathsf G_1(\Omega_{C_{\mathcal A}(X),\mathbf 0},\Omega_{C_{\mathcal B}(X),\mathbf 0})\\
        &\equiv \mathsf G_1(\mathscr D_{MU_{\mathcal A}(X)},\Omega_{MU_{\mathcal B}(X),\mathbf 0})\\
        &\equiv \mathsf G_1(\mathscr D_{C_{\mathcal A}(X)},\Omega_{C_{\mathcal B}(X),\mathbf 0})
    \end{align*}
    and
    \begin{align*}
        \mathsf G_1(\mathscr N_X[\mathcal A], \neg \mathcal O_X(\mathcal B))
        &\equiv \mathsf G_1(\mathscr N_{MU_{\mathcal A}(X),\mathbf 0}, \neg \Omega_{MU_{\mathcal B}(X),\mathbf 0})\\
        &\equiv \mathsf G_1(\mathscr N_{C_{\mathcal A}(X),\mathbf 0}, \neg \Omega_{C_{\mathcal B}(X),\mathbf 0})\\
        &\equiv \mathsf G_1(\mathscr T_{MU_{\mathcal A}(X)}, \neg \Omega_{MU_{\mathcal B}(X),\mathbf 0})\\
        &\equiv \mathsf G_1(\mathscr T_{C_{\mathcal A}(X)}, \neg \Omega_{C_{\mathcal B}(X),\mathbf 0}).
    \end{align*}
\end{corollary}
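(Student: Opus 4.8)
The plan is to derive every assertion formally from Theorem \ref{thm:FirstEquivalence} together with the duality bookkeeping in Lemmas \ref{lem:DualGames} and \ref{lem:BasicEquivDual}; no new translation needs to be constructed, since the substantive work already sits inside Theorem \ref{thm:FirstEquivalence}. The first step is a purely notational reconciliation. Theorem \ref{thm:FirstEquivalence} is phrased with target class \(\Lambda_X(\mathcal B)\), whereas \(\mathcal G\) is phrased with \(\mathcal O_X(\mathcal B)\); since \(\mathcal B\) is an ideal of closed sets, Lemma \ref{lem:IdealLargeCovers} gives \(\mathcal O_X(\mathcal B) = \Lambda_X(\mathcal B)\), so \(\mathcal G = \mathsf G_1(\mathcal O_X(\mathcal A), \Lambda_X(\mathcal B))\). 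Because \(\mathcal A\)-normality implies regularity, all three clauses of Theorem \ref{thm:FirstEquivalence} are available, and they form a cycle of \(\leq_{\mathrm{II}}\) relations among \(\mathcal G\), \(\mathsf G_1(\Omega_{MU_{\mathcal A}(X),\mathbf 0},\Omega_{MU_{\mathcal B}(X),\mathbf 0})\), and \(\mathsf G_1(\mathscr D_{MU_{\mathcal A}(X)},\Omega_{MU_{\mathcal B}(X),\mathbf 0})\). By transitivity of \(\leq_{\mathrm{II}}\) and the remark that mutual \(\leq_{\mathrm{II}}\) upgrades to \(\equiv\), this yields the first displayed equivalence chain.

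Next I would pass to duals. Lemma \ref{lem:DualGames}(i) makes \(\mathcal G\) dual to \(\mathcal H\); Lemma \ref{lem:DualGames}(iii), applied inside \(MU_{\mathcal A}(X)\) at the point \(\mathbf 0\) with the collection \(\mathcal B\) of that lemma instantiated as \(\Omega_{MU_{\mathcal B}(X),\mathbf 0}\), makes \(\mathsf G_1(\Omega_{MU_{\mathcal A}(X),\mathbf 0},\Omega_{MU_{\mathcal B}(X),\mathbf 0})\) dual to \(\mathsf G_1(\mathscr N_{MU_{\mathcal A}(X),\mathbf 0}, \neg\Omega_{MU_{\mathcal B}(X),\mathbf 0})\); and Lemma \ref{lem:DualGames}(ii), applied in the same space with the same instantiation, makes \(\mathsf G_1(\mathscr D_{MU_{\mathcal A}(X)},\Omega_{MU_{\mathcal B}(X),\mathbf 0})\) dual to \(\mathsf G_1(\mathscr T_{MU_{\mathcal A}(X)}, \neg\Omega_{MU_{\mathcal B}(X),\mathbf 0})\). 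Since the three primal games are pairwise equivalent by the previous step, Lemma \ref{lem:BasicEquivDual} transfers that equivalence to their three duals, which is exactly the second displayed chain for \(\mathcal H\); the asserted duality of \(\mathcal G\) and \(\mathcal H\) is simply the instance of Lemma \ref{lem:DualGames}(i) already invoked.

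For the functionally \(\mathcal A\)-normal case I would first observe that functional \(\mathcal A\)-normality implies \(\mathcal A\)-normality: if \(f\) separates \(A\) from \(X \setminus U\), then \(V = f^{-1}((-\infty,1/2))\) is open with \(A \subseteq V\) and \(\mathrm{cl}(V) \subseteq f^{-1}((-\infty,1/2]) \subseteq U\). Hence both chains above persist verbatim. The one extra ingredient is the continuous-function analog of Theorem \ref{thm:FirstEquivalence}: under functional \(\mathcal A\)-normality one has \(\mathcal G \equiv \mathsf G_1(\Omega_{C_{\mathcal A}(X),\mathbf 0},\Omega_{C_{\mathcal B}(X),\mathbf 0}) \equiv \mathsf G_1(\mathscr D_{C_{\mathcal A}(X)},\Omega_{C_{\mathcal B}(X),\mathbf 0})\). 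This can be obtained either by rerunning the argument of Theorem \ref{thm:FirstEquivalence} with continuous selections — functional \(\mathcal A\)-normality is precisely what supplies the Urysohn-type function replacing the piecewise \(f\) of clause (iii) — or by importing the existing \(C_{\mathcal A}(X)\) theory in \cite{CHContinuousFunctions}. Splicing this into the \(MU\) chain by transitivity of \(\equiv\) produces the enlarged primal chain, and applying Lemma \ref{lem:DualGames}(ii),(iii) to the two \(C\)-games followed by Lemma \ref{lem:BasicEquivDual} produces the enlarged dual chain.

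I expect the only friction to be clerical rather than mathematical: in each invocation of Lemma \ref{lem:DualGames} one must keep straight which collection serves as the ambient space and which serves as the target collection \(\mathcal B\) of that lemma, and one must confirm that the imported continuous-function statement is established under a hypothesis no stronger than functional \(\mathcal A\)-normality so that the citation is legitimate. There is no delicate construction to perform here; all the genuine content resides in Theorem \ref{thm:FirstEquivalence}, and the corollary reduces to correctly wiring together equivalence, duality, and transitivity.
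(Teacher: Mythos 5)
Your proposal is correct and follows essentially the same route as the paper, whose entire proof is the citation ``Apply Theorem \ref{thm:FirstEquivalence}, Lemmas \ref{lem:IdealLargeCovers}, \ref{lem:BasicEquivDual}, and \ref{lem:DualGames}, and \cite[Cor. 14]{CHContinuousFunctions}'' --- exactly the wiring of equivalence, duality, and the imported \(C_{\mathcal A}(X)\) result that you describe. Your explicit check that functional \(\mathcal A\)-normality implies \(\mathcal A\)-normality is a detail the paper leaves implicit, and it is a welcome addition.
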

\begin{proof}
    Apply Theorem \ref{thm:FirstEquivalence}, Lemmas \ref{lem:IdealLargeCovers}, \ref{lem:BasicEquivDual}, and \ref{lem:DualGames},
    and \cite[Cor. 14]{CHContinuousFunctions}.
\end{proof}
In \cite[Thm. 31]{CHHyperspaces}, inspired by Li \cite{Li2016}, the game
\(\mathsf G_1(\mathcal O_X(\mathcal A) , \mathcal O_X(\mathcal B))\) is shown to be equivalent
to the selective separability game on certain hyperspaces of \(X\),
which we only note in passing here for the interested reader.

Recall that a subset \(A\) of a topological space is \emph{sequentially compact} if every sequence in
\(A\) has a subsequence which converges to a point of \(A\).
\begin{lemma} \label{lem:SequentiallyCompact}
    Suppose \(\Phi \in MU(X)\) and that \(A \subseteq X\) is sequentially compact.
    Then \(\Phi[A]\) is bounded.
\end{lemma}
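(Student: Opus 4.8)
The plan is to argue by contradiction, exploiting the characterization of minimal usco maps via subcontinuous selections given in Theorem \ref{thm:HolaHolyChar}. Suppose $\Phi \in MU(X)$ but $\Phi[A]$ is unbounded for some sequentially compact $A \subseteq X$. By Theorem \ref{thm:HolaHolyChar}, the selection $f(x) = \max \Phi(x)$ is a subcontinuous selection of $\Phi$ (this is exactly the consequence of Theorem \ref{thm:HolaHolyChar} noted in the excerpt). Since $\Phi[A] = \bigcup_{x \in A} \Phi(x)$ is unbounded, I first want to transfer the unboundedness to a single selection: because each $\Phi(x)$ is compact, hence bounded, unboundedness of $\Phi[A]$ must come from the points $x$ ranging over $A$, so there is a sequence $\langle x_n : n \in \omega \rangle$ in $A$ together with values $y_n \in \Phi(x_n)$ such that $|y_n| \to \infty$. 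Replacing $y_n$ by $\max \Phi(x_n) = f(x_n)$ or $\min \Phi(x_n)$ as appropriate, I can arrange that $|f(x_n)| \to \infty$ (using that $\max \Phi(x)$ and $\min \Phi(x)$ both give subcontinuous selections, so either extreme is available).

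Next I would invoke sequential compactness of $A$: the sequence $\langle x_n : n \in \omega \rangle$ has a subsequence $\langle x_{n_k} : k \in \omega \rangle$ converging to some $x \in A$. Now subcontinuity of the selection $f$ applies directly to this convergent sequence (viewed as a net): since $x_{n_k} \to x$, the sequence $\langle f(x_{n_k}) : k \in \omega \rangle$ must have an accumulation point in $\mathbb{R}$. But an accumulation point in $\mathbb{R}$ forces a subsequence of $\langle f(x_{n_k}) \rangle$ to stay within a bounded neighborhood of that accumulation point, contradicting $|f(x_{n_k})| \to \infty$. This contradiction establishes that $\Phi[A]$ is bounded.

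The one technical point requiring care is the passage from ``$\Phi[A]$ unbounded'' to ``some subcontinuous selection is unbounded along a convergent sequence.'' The subtlety is that the extremal selections $x \mapsto \max \Phi(x)$ and $x \mapsto \min \Phi(x)$ are the natural subcontinuous selections to use, and I must check that at least one of them realizes the unboundedness. If $\Phi[A]$ is unbounded above, then $\sup_{x \in A} \max \Phi(x) = +\infty$, giving a sequence $x_n$ with $f(x_n) = \max \Phi(x_n) \to +\infty$; symmetrically, if unbounded below, use $g(x) = \min \Phi(x)$, which is also a selection and hence subcontinuous by Theorem \ref{thm:HolaHolyChar}. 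In either case the selected values diverge to $\pm\infty$ along the chosen sequence, and I can then extract the convergent subsequence of domain points. I expect this bookkeeping step---ensuring the divergence is carried by a genuine subcontinuous selection rather than merely by the union of fibers---to be the main (though minor) obstacle; the rest is an immediate application of subcontinuity against sequential compactness.
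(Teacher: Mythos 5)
Your proof is correct and follows essentially the same route as the paper's: both argue by contradiction, extract a sequence in \(A\) along which values of \(\Phi\) diverge, pass to a convergent subsequence by sequential compactness, and contradict the subcontinuity of a selection guaranteed by Theorem \ref{thm:HolaHolyChar}. The only difference is cosmetic: the paper builds an ad hoc selection \(f\) with \(f(x_n) = y_n\), whereas you use the extremal selections \(x \mapsto \max \Phi(x)\) and \(x \mapsto \min \Phi(x)\) with a case split, which neatly sidesteps the need to define a selection through prescribed points.
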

\begin{proof}
    Suppose \(\Phi : X \to \mathbb K(\mathbb R)\) is unbounded on \(A \subseteq X\) which is sequentially compact.
    For each \(n \in \omega\), let \(x_n \in A\) be so that there is some \(y \in \Phi(x_n)\) with \(|y| \geq n\).
    Then let \(y_n \in \Phi(x_n)\) be so that \(|y_n| \geq n\) and define \(f : X \to \mathbb R\) to be a selection
    of \(\Phi\) so that \(f(x_n) = y_n\) for \(n \in \omega\).
    Since \(A\) is sequentially compact, we can find \(x \in A\) and a subsequence \(\langle x_{n_k} : k \in \omega \rangle\)
    so that \(x_{n_k} \to x\).
    Notice that \(\langle f(x_{n_k}) : k \in \omega \rangle\) does not have an accumulation point.
    Therefore \(f\) is not subcontinuous and, by Theorem \ref{thm:HolaHolyChar}, \(\Phi\) is not a minimal usco map.
\end{proof}

\begin{theorem} \label{thm:SecondTheorem}
    Let \(\mathcal A\) and \(\mathcal B\) be ideals of closed subsets of \(X\).
    If \(X\) is \(\mathcal A\)-normal and \(\mathcal B\) consists of sequentially compact sets, then
    \[
        \mathsf G_1(\mathscr{N}_X[\mathcal A], \neg \Lambda_X(\mathcal B))
        \leq_{\mathrm{II}} \mathsf G_1(\mathscr{T}_{MU_{\mathcal A}(X)}, \mathrm{CD}_{MU_{\mathcal B}(X)}).
    \]
    Consequently,
    \begin{align*}
        \mathsf G_1(\mathscr N_X[\mathcal A], \neg \mathcal O_X(\mathcal B))
        &\equiv \mathsf G_1(\mathscr N_{MU_{\mathcal A}(X),\mathbf 0}, \neg \Omega_{MU_{\mathcal B}(X),\mathbf 0})\\
        &\equiv \mathsf G_1(\mathscr T_{MU_{\mathcal A}(X)}, \neg \Omega_{MU_{\mathcal B}(X),\mathbf 0})\\
        &\equiv \mathsf G_1(\mathscr{T}_{MU_{\mathcal A}(X)}, \mathrm{CD}_{MU_{\mathcal B}(X)}).
    \end{align*}
\end{theorem}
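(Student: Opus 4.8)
The plan is to derive the displayed inequality directly from the Translation Theorem (Theorem~\ref{TranslationTheorem}) and then to splice it into the chain already supplied by Corollary~\ref{cor:Rothberger}.

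\emph{The inequality.} I would run Theorem~\ref{TranslationTheorem} with source game \(\mathsf G_1(\mathscr N_X[\mathcal A],\neg\Lambda_X(\mathcal B))\) and target game \(\mathsf G_1(\mathscr T_{MU_{\mathcal A}(X)},\mathrm{CD}_{MU_{\mathcal B}(X)})\). For each proper nonempty open \(\mathcal O\subseteq MU_{\mathcal A}(X)\), fix \(\Phi_{\mathcal O}\in\mathcal O\) together with a basic neighborhood \([\Phi_{\mathcal O};A_{\mathcal O},\varepsilon_{\mathcal O}]\subseteq\mathcal O\) with \(A_{\mathcal O}\in\mathcal A\), and declare \(\overleftarrow{T}_{\mathrm{I},n}(\mathcal O)=\mathscr N_X(A_{\mathcal O})\). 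When Two answers with an open \(U\supseteq A_{\mathcal O}\), I would use \(\mathcal A\)-normality to interpolate \(A_{\mathcal O}\subseteq V\subseteq\mathrm{cl}(V)\subseteq U\) and feed the selection \(x\mapsto\max\Phi_{\mathcal O}(x)\) (a selection by Theorem~\ref{thm:HolaHolyChar}) into Corollary~\ref{cor:UscoAgreeOpen} with constant value \(y_0=n\); the output \(\Phi_n:=\overline g\in MU(X)\) agrees with \(\Phi_{\mathcal O}\) on \(A_{\mathcal O}\), hence sits in \([\Phi_{\mathcal O};A_{\mathcal O},\varepsilon_{\mathcal O}]\subseteq\mathcal O\), while \(g\) is constantly \(n\) off \(U\). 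Setting \(\overrightarrow{T}_{\mathrm{II},n}(U,\mathcal O)=\Phi_n\) (and any point of \(\mathcal O\) otherwise) gives condition (i) at once.

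\emph{The crux.} The substance is condition (ii). Given \(U_n\supseteq A_{\mathcal O_n}\) with \(\{U_n:n\in\omega\}\in\neg\Lambda_X(\mathcal B)\), Lemma~\ref{lem:IdealLargeCovers} identifies \(\Lambda_X(\mathcal B)\) with \(\mathcal O_X(\mathcal B)\); since every \(U_n\) is a proper open set, the only way \(\{U_n\}\) can fail to be a \(\mathcal B\)-cover is that some \(B\in\mathcal B\) has \(B\not\subseteq U_n\) for all \(n\). Choosing \(x_n\in B\setminus U_n\) yields \(n=g(x_n)\in\Phi_n(x_n)\). To see that \(\{\Phi_n\}\) is closed discrete in \(MU_{\mathcal B}(X)\), take any \(\Psi\in MU(X)\): as \(B\) is sequentially compact, Lemma~\ref{lem:SequentiallyCompact} bounds \(\Psi[B]\subseteq(-M,M)\), so \(\Phi_n\in[\Psi;B,1]\) forces \(\Phi_n(x_n)\subseteq\mathbb B(\Psi(x_n),1)\subseteq(-M-1,M+1)\) and hence \(n<M+1\); thus \([\Psi;B,1]\) contains only finitely many \(\Phi_n\). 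Because \(\mathcal B\) holds all singletons, the generating entourages \(\mathbf W(B,\varepsilon)\) intersect in the diagonal, so \(MU_{\mathcal B}(X)\) is Hausdorff, and in a \(T_1\) space a set that some neighborhood of each point meets finitely is closed discrete. This is the main obstacle: the value \(y_0=n\) must be driven to infinity precisely so that the sequential compactness of the uncovered \(B\) defeats every candidate cluster point, which is exactly where both Lemma~\ref{lem:SequentiallyCompact} and \(T_1\)-separation are used.

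\emph{The equivalences.} By Lemma~\ref{lem:IdealLargeCovers} the inequality just obtained reads \(\mathcal H:=\mathsf G_1(\mathscr N_X[\mathcal A],\neg\mathcal O_X(\mathcal B))\leq_{\mathrm{II}}\mathsf G_1(\mathscr T_{MU_{\mathcal A}(X)},\mathrm{CD}_{MU_{\mathcal B}(X)})\), and Corollary~\ref{cor:Rothberger} already gives \(\mathcal H\equiv\mathsf G_1(\mathscr N_{MU_{\mathcal A}(X),\mathbf 0},\neg\Omega_{MU_{\mathcal B}(X),\mathbf 0})\equiv\mathsf G_1(\mathscr T_{MU_{\mathcal A}(X)},\neg\Omega_{MU_{\mathcal B}(X),\mathbf 0})\). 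It then suffices to prove the reverse comparison \(\mathsf G_1(\mathscr T_{MU_{\mathcal A}(X)},\mathrm{CD}_{MU_{\mathcal B}(X)})\leq_{\mathrm{II}}\mathsf G_1(\mathscr T_{MU_{\mathcal A}(X)},\neg\Omega_{MU_{\mathcal B}(X),\mathbf 0})\), which I would again get from Theorem~\ref{TranslationTheorem} using the trimming maps \(\overleftarrow{T}_{\mathrm{I},n}(\mathcal O)=\mathcal O\setminus\{\mathbf 0\}\) and \(\overrightarrow{T}_{\mathrm{II},n}(\Phi,\mathcal O)=\Phi\). One checks that \(\mathbf 0\) is not isolated in \(MU_{\mathcal A}(X)\) (perturb \(\mathbf 0\) off a closed neighborhood of a chosen \(A\in\mathcal A\) via Corollary~\ref{cor:UscoAgreeOpen}), so that \(\mathcal O\setminus\{\mathbf 0\}\) remains a proper nonempty open set; then any closed discrete selection drawn from the sets \(\mathcal O_n\setminus\{\mathbf 0\}\) avoids \(\mathbf 0\) and, being closed, cannot cluster at \(\mathbf 0\), giving condition (ii). Finally, transitivity of \(\leq_{\mathrm{II}}\) applied to \(\mathcal H\leq_{\mathrm{II}}\mathsf G_1(\mathscr T_{MU_{\mathcal A}(X)},\mathrm{CD}_{MU_{\mathcal B}(X)})\leq_{\mathrm{II}}\mathsf G_1(\mathscr T_{MU_{\mathcal A}(X)},\neg\Omega_{MU_{\mathcal B}(X),\mathbf 0})\equiv\mathcal H\) collapses all four games into a single equivalence class.
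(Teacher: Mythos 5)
Your proposal is correct and follows essentially the same route as the paper's proof: the identical translation-theorem setup (fix a basic neighborhood \([\Phi_{\mathcal O};A_{\mathcal O},\varepsilon_{\mathcal O}]\subseteq\mathcal O\), send One's move to \(\mathscr N_X(A_{\mathcal O})\), use \(\mathcal A\)-normal interpolation and Corollary \ref{cor:UscoAgreeOpen} with the escalating constant \(y_0=n\), and invoke Lemma \ref{lem:SequentiallyCompact} on the uncovered \(B\) to force closed discreteness), followed by the same cycle \(\mathcal H\leq_{\mathrm{II}}\mathsf G_1(\mathscr T_{MU_{\mathcal A}(X)},\mathrm{CD}_{MU_{\mathcal B}(X)})\leq_{\mathrm{II}}\mathsf G_1(\mathscr T_{MU_{\mathcal A}(X)},\neg\Omega_{MU_{\mathcal B}(X),\mathbf 0})\equiv\mathcal H\) collapsed by transitivity. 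Your only departures are refinements rather than a different argument: routing the failure of \(\Lambda_X(\mathcal B)\) through Lemma \ref{lem:IdealLargeCovers}, checking all points (not just those outside \(\{\Phi_n\}\)) in the closed-discreteness verification, and the trimming maps \(\mathcal O\mapsto\mathcal O\setminus\{\mathbf 0\}\), which make rigorous the paper's one-line claim that closed discrete plays avoid clustering at \(\mathbf 0\) (a claim that silently needs the plays to miss \(\mathbf 0\) itself).
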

\begin{proof}
    Let \(\pi_1 : MU(X) \times \mathcal A \times \mathbb R \to MU(X)\),
    \(\pi_2 : MU(X) \times \mathcal A \times \mathbb R \to \mathcal A\),
    and \(\pi_3 : MU(X) \times \mathcal A \times \mathbb R \to \mathbb R\) be the
    standard coordinate projection maps.
    Define a choice function \(\gamma : \mathscr T_{MU_{\mathcal A}(X)} \to MU(X) \times \mathcal A \times \mathbb R\)
    so that \[[\pi_1(\gamma(W)); \pi_2(\gamma(W)) , \pi_3(\gamma(W))] \subseteq W.\]
    Let \(\Psi_{W} = \pi_1(\gamma(W))\) and \(A_W = \pi_2(\gamma(W))\).
    Now we define \(\overleftarrow{T}_{\mathrm{I},n} : \mathscr T_{MU_{\mathcal A}(X)} \to \mathscr N_X[\mathcal A]\)
    by \(\overleftarrow{T}_{\mathrm{I},n}(W) = \mathscr N_X(A_W)\).

    We now define
    \(\overrightarrow{T}_{\mathrm{II},n} : \mathscr T_X \times \mathscr T_{MU_{\mathcal A}(X)} \to MU(X)\)
    in the following way.
    For \(A \in \mathcal A\) and \(U \in \mathscr N_X(A)\), let \(V_{A,U}\) be open so that
    \[A \subseteq V_{A,U} \subseteq \mathrm{cl}(V_{A,U}) \subseteq U.\]
    For \(W \in \mathscr T_{MU_{\mathcal A}(X)}\) and \(U \in \overleftarrow{T}_{\mathrm{I},n}(W)\),
    define \(f_{W,U,n} : X \to \mathbb R\) by the rule
    \[f_{W,U,n}(x) =
    \begin{cases}
        n, & x \in \mathrm{cl}(X \setminus \mathrm{cl}(V_{A_W,U}));\\
        \max \Psi_W(x), & \mathrm{otherwise}.\\
    \end{cases}
    \]
    Then we set
    \[\overrightarrow{T}_{\mathrm{II},n}(U,W) =
    \begin{cases}
        \overline{f}_{W,U,n}, & U \in \overleftarrow{T}_{\mathrm{I},n}(W);\\
        \mathbf 0, & \text{otherwise}.
    \end{cases}
    \]
    By Corollary \ref{cor:UscoAgreeOpen}, \(\overrightarrow{T}_{\mathrm{II},n}(U,W) \in MU(X)\) and,
    if \(U \in \overleftarrow{T}_{\mathrm{I},n}(W)\),
    \[\overrightarrow{T}_{\mathrm{II},n}(U,W) \in [\Psi_W;A_W,\pi_3(\gamma(W))] \subseteq W.\]

    Suppose we have a sequence
    \[\langle U_n : n \in \omega \rangle \in \prod_{n\in\omega} \overleftarrow{T}_{\mathrm{I},n}(W_n)\]
    for a sequence \(\langle W_n : n \in \omega \rangle\) of \(\mathscr T_{MU_{\mathcal A}(X)}\)
    so that \(\{ U_n : n \in \omega \} \not\in \Lambda_X(\mathcal B)\).
    Let \(\Phi_n = \overrightarrow{T}_{\mathrm{II},n}(U_n,W_n)\) for each \(n \in \omega\).
    We can find \(N \in \omega\) and \(B \in \mathcal B\) so that, for every \(n \geq N\),
    \(B \not\subseteq U_n\).
    Now, suppose \(\Phi \in MU(X) \setminus \{\Phi_n : n \in \omega\}\) is arbitrary.
    By Lemma \ref{lem:SequentiallyCompact}, \(\Phi[B]\) is bounded, so let \(M > \sup |\Phi[B]|\)
    and \(n \geq \max\{N,M+1\}\).
    Now, for \(x \in B \setminus U_n\), note that \(n \in \Phi_n(x)\) and that, for \(y \in \Phi(x)\),
    \[y \leq \sup |\Phi[B]| < M \leq n-1 \implies y-n < -1 \implies |y-n| > 1.\]
    In particular, \(\Phi_n(x) \not\subseteq \mathbb B(\Phi(x),1)\) which establishes that
    \(\Phi_n \not\in [\Phi; B, 1]\).
    Hence, \(\{\Phi_n : n \in \omega \}\) is closed and discrete and Theorem \ref{TranslationTheorem} applies.

    For what remains, observe that
    \[\mathsf G_1(\mathscr{T}_{MU_{\mathcal A}(X)}, \mathrm{CD}_{MU_{\mathcal B}(X)})
    \leq_{\mathrm{II}} \mathsf G_1(\mathscr T_{MU_{\mathcal A}(X)}, \neg \Omega_{MU_{\mathcal B}(X),\mathbf 0})\]
    since, if Two can produce a closed discrete set, then Two can avoid clustering around \(\mathbf 0\).
    Hence, by Corollary \ref{cor:Rothberger} we obtain that
    \begin{align*}
        \mathsf G_1(\mathscr{N}_X[\mathcal A], \neg \mathcal O_X(\mathcal B))
        &= \mathsf G_1(\mathscr{N}_X[\mathcal A], \neg \Lambda_X(\mathcal B))\\
        &\leq_{\mathrm{II}} \mathsf G_1(\mathscr{T}_{MU_{\mathcal A}(X)}, \mathrm{CD}_{MU_{\mathcal B}(X)})\\
        &\leq_{\mathrm{II}} \mathsf G_1(\mathscr T_{MU_{\mathcal A}(X)}, \neg \Omega_{MU_{\mathcal B}(X),\mathbf 0})\\
        &\equiv \mathsf G_1(\mathscr{N}_X[\mathcal A], \neg \mathcal O_X(\mathcal B)).
    \end{align*}
    This complete the proof.
\end{proof}

\begin{corollary} \label{cor:PointOpen}
    Let \(\mathcal A\) and \(\mathcal B\) be ideals of closed subsets of \(X\).
    If \(X\) is functionally \(\mathcal A\)-normal and \(\mathcal B\) consists of
    sequentially compact sets, then
    \begin{align*}
        \mathsf G_1(\mathscr N_X[\mathcal A], \neg \mathcal O_X(\mathcal B))
        &\equiv \mathsf G_1(\mathscr N_{MU_{\mathcal A}(X),\mathbf 0}, \neg \Omega_{MU_{\mathcal B}(X),\mathbf 0})\\
        &\equiv \mathsf G_1(\mathscr N_{C_{\mathcal A}(X),\mathbf 0}, \neg \Omega_{C_{\mathcal B}(X),\mathbf 0})\\
        &\equiv \mathsf G_1(\mathscr T_{MU_{\mathcal A}(X)}, \neg \Omega_{MU_{\mathcal B}(X),\mathbf 0})\\
        &\equiv \mathsf G_1(\mathscr T_{C_{\mathcal A}(X)}, \neg \Omega_{C_{\mathcal B}(X),\mathbf 0})\\
        &\equiv \mathsf G_1(\mathscr{T}_{MU_{\mathcal A}(X)}, \mathrm{CD}_{MU_{\mathcal B}(X)})\\
        &\equiv \mathsf G_1(\mathscr{T}_{C_{\mathcal A}(X)}, \mathrm{CD}_{C_{\mathcal B}(X)}).
    \end{align*}
\end{corollary}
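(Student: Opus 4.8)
The plan is to recognize this as an assembly corollary: all seven games are shown equivalent by chaining together equivalences already established, with transitivity of \(\equiv\) doing the bookkeeping. The first thing I would record is that functional \(\mathcal A\)-normality implies \(\mathcal A\)-normality — given \(A \in \mathcal A\) and open \(U \supseteq A\), a continuous \(f\) with \(f[A] = \{0\}\) and \(f[X \setminus U] = \{1\}\) yields the open set \(V = \{x \in X : f(x) < 1/2\}\) satisfying \(A \subseteq V \subseteq \mathrm{cl}(V) \subseteq \{x \in X : f(x) \leq 1/2\} \subseteq U\) — so that the hypotheses of both Corollary \ref{cor:Rothberger} and Theorem \ref{thm:SecondTheorem} are simultaneously in force.

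With that in hand I would invoke the functionally-\(\mathcal A\)-normal conclusion of Corollary \ref{cor:Rothberger}, which already delivers the mutual equivalence of the first five games in the display, namely \(\mathsf G_1(\mathscr N_X[\mathcal A], \neg \mathcal O_X(\mathcal B))\) together with its four companions built from \(\mathscr N_{MU_{\mathcal A}(X),\mathbf 0}\), \(\mathscr N_{C_{\mathcal A}(X),\mathbf 0}\), \(\mathscr T_{MU_{\mathcal A}(X)}\), and \(\mathscr T_{C_{\mathcal A}(X)}\). Next I would apply Theorem \ref{thm:SecondTheorem}, whose conclusion inserts the sixth game \(\mathsf G_1(\mathscr{T}_{MU_{\mathcal A}(X)}, \mathrm{CD}_{MU_{\mathcal B}(X)})\) into the chain by tying it back to \(\mathsf G_1(\mathscr N_X[\mathcal A], \neg \mathcal O_X(\mathcal B))\); since the latter appears in both lists, transitivity of \(\equiv\) collapses games one through six into a single equivalence class.

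The only genuinely new ingredient is the last game \(\mathsf G_1(\mathscr{T}_{C_{\mathcal A}(X)}, \mathrm{CD}_{C_{\mathcal B}(X)})\), the continuous-function closed-discrete selection game, and this is where I expect the real work to sit. My plan is to establish the continuous analogue of Theorem \ref{thm:SecondTheorem}, that is \(\mathsf G_1(\mathscr N_X[\mathcal A], \neg \Lambda_X(\mathcal B)) \leq_{\mathrm{II}} \mathsf G_1(\mathscr{T}_{C_{\mathcal A}(X)}, \mathrm{CD}_{C_{\mathcal B}(X)})\), by re-running the argument of Theorem \ref{thm:SecondTheorem} through Theorem \ref{TranslationTheorem}: the translation maps \(\overleftarrow{T}_{\mathrm{I},n}\) and \(\overrightarrow{T}_{\mathrm{II},n}\) are taken to be formally identical, except that where the usco proof patches \(\max \Psi_W\) to the constant \(n\) via Corollary \ref{cor:UscoAgreeOpen}, the continuous proof interpolates between the center function \(g_W\) and \(n\) using a Urysohn function supplied directly by functional \(\mathcal A\)-normality, and where Lemma \ref{lem:SequentiallyCompact} guarantees boundedness of \(\Phi[B]\) one instead uses the trivial fact that a continuous real-valued function carries a sequentially compact set to a bounded set. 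Combined with the reverse \(\leq_{\mathrm{II}}\) coming from the observation that a closed discrete set does not cluster at \(\mathbf 0\) — so that \(\mathsf G_1(\mathscr{T}_{C_{\mathcal A}(X)}, \mathrm{CD}_{C_{\mathcal B}(X)}) \leq_{\mathrm{II}} \mathsf G_1(\mathscr T_{C_{\mathcal A}(X)}, \neg \Omega_{C_{\mathcal B}(X),\mathbf 0})\), the latter already in the class via Corollary \ref{cor:Rothberger} — this sandwiches the seventh game between members of the established equivalence class. The main obstacle is therefore not conceptual but a matter of verification: checking that the patched continuous functions still land in the prescribed basic neighborhoods of \(C_{\mathcal A}(X)\) and that the resulting selection set is closed and discrete, mirroring the usco computation step for step. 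Alternatively, if \cite{CHContinuousFunctions} already records this continuous closed-discrete equivalence, that final step reduces to a citation and the entire corollary becomes a one-line assembly of Corollary \ref{cor:Rothberger}, Theorem \ref{thm:SecondTheorem}, and the cited continuous-function result.
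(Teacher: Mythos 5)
Your proposal is correct, and at the level of structure it is the same assembly-by-transitivity the paper uses, anchored at the common game \(\mathsf G_1(\mathscr N_X[\mathcal A], \neg \mathcal O_X(\mathcal B))\); the difference lies in how the \(C_{\mathcal A}(X)\) games are obtained. The paper's entire proof is ``Apply Theorem \ref{thm:SecondTheorem} and \cite[Cor. 15]{CHContinuousFunctions}'': the cited corollary hands over all three continuous-function games at once, \emph{including} \(\mathsf G_1(\mathscr T_{C_{\mathcal A}(X)}, \mathrm{CD}_{C_{\mathcal B}(X)})\), so the ``alternative'' you mention only as a fallback is precisely the paper's route. Your primary route differs in two ways: you pull the five \(\neg\Omega\) games from Corollary \ref{cor:Rothberger} rather than from the citation, and you re-prove the continuous closed-discrete equivalence by hand, mirroring Theorem \ref{thm:SecondTheorem} with the interpolation \(h = (1-u)g_W + u\cdot n\) (with \(u\) a Urysohn-type function from functional \(\mathcal A\)-normality) in place of the patching of Corollary \ref{cor:UscoAgreeOpen}, and with boundedness of continuous images of sequentially compact sets in place of Lemma \ref{lem:SequentiallyCompact}. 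That argument is sound: \(h\) agrees with \(g_W\) on \(A_W\), giving condition (i) of Theorem \ref{TranslationTheorem}; \(h\) equals \(n\) off \(U\), giving the discreteness estimate in condition (ii); and the sandwich through \(\mathsf G_1(\mathscr T_{C_{\mathcal A}(X)}, \neg\Omega_{C_{\mathcal B}(X),\mathbf 0})\), which Corollary \ref{cor:Rothberger} already places in the equivalence class, closes the loop. What your route buys is self-containedness (no reliance on \cite[Cor. 15]{CHContinuousFunctions}); what the paper's buys is brevity, since that equivalence is already on record. Finally, your explicit check that functional \(\mathcal A\)-normality implies \(\mathcal A\)-normality---needed before Theorem \ref{thm:SecondTheorem} can be invoked at all---is a hypothesis verification the paper leaves implicit, and your proof of it is correct.
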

\begin{proof}
    Apply Theorem \ref{thm:SecondTheorem} and \cite[Cor. 15]{CHContinuousFunctions}.
\end{proof}
We now offer some relationships related to Gruenhage's \(W\)-games.
\begin{proposition} \label{prop:ConvergenceGames}
    Let \(\mathcal A\) and \(\mathcal B\) be ideals of closed subsets of \(X\).
    Then
    \begin{enumerate}[label=(\roman*)]
        \item \label{prop:ConvergenceGamesA}
        \(\mathsf G_1(\mathscr N_{MU_{\mathcal A}(X),\mathbf 0} , \neg \Omega_{MU_{\mathcal B}(X),\mathbf 0})
        \leq_{\mathrm{II}} \mathsf G_1(\mathscr N_{MU_{\mathcal A}(X),\mathbf 0} , \neg \Gamma_{MU_{\mathcal B}(X),\mathbf 0})\) and
        \item \label{prop:ConvergenceGamesB}
        \(\mathsf G_1(\mathscr N_{MU_{\mathcal A}(X),\mathbf 0} , \neg \Gamma_{MU_{\mathcal B}(X),\mathbf 0})
        \leq_{\mathrm{II}} \mathsf G_1(\mathscr N_X[\mathcal A], \neg \Gamma_X(\mathcal B))\).
    \end{enumerate}
\end{proposition}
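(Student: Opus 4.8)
The plan is to derive both inequalities from the Translation Theorem (Theorem \ref{TranslationTheorem}). The workhorse observation is the identification
\(\Phi \in [\mathbf 0; A, \varepsilon]\) if and only if \(A \subseteq \Phi^\leftarrow[(-\varepsilon,\varepsilon)]\), which comes from unwinding \(\mathbf W(A,\varepsilon)\) together with \(\mathbf 0(x) = \{0\}\): the containment \(\Phi(x)\subseteq(-\varepsilon,\varepsilon)\) for every \(x\in A\) is exactly what \([\mathbf 0;A,\varepsilon]\) records.

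For \ref{prop:ConvergenceGamesA} the two games share the same move set \(\mathscr N_{MU_{\mathcal A}(X),\mathbf 0}\) for One, so I would take both translation maps to be the identity, \(\overleftarrow{T}_{\mathrm{I},n}(W) = W\) and \(\overrightarrow{T}_{\mathrm{II},n}(\Phi,W) = \Phi\). The first hypothesis of Theorem \ref{TranslationTheorem} is then trivial, and the second reduces to the inclusion \(\neg\Omega_{MU_{\mathcal B}(X),\mathbf 0} \subseteq \neg\Gamma_{MU_{\mathcal B}(X),\mathbf 0}\); that is, a set of maps failing to cluster at \(\mathbf 0\) a fortiori fails to converge to \(\mathbf 0\), which is immediate since the limit of a convergent sequence lies in the closure of its range.

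For \ref{prop:ConvergenceGamesB} I would set \(\overleftarrow{T}_{\mathrm{I},n}(\mathscr N_X(A)) = [\mathbf 0; A, 2^{-n}]\), a basic neighborhood of \(\mathbf 0\) in \(MU_{\mathcal A}(X)\), and let \(\overrightarrow{T}_{\mathrm{II},n}(\Phi, \mathscr N_X(A))\) be \(\Phi^\leftarrow[(-2^{-n},2^{-n})]\) whenever this is a proper subset of \(X\), and a fixed proper open superset of \(A\) otherwise. The first hypothesis of Theorem \ref{TranslationTheorem} holds because \(\Phi \in [\mathbf 0; A, 2^{-n}]\) forces \(A \subseteq \Phi^\leftarrow[(-2^{-n},2^{-n})]\), placing the output in \(\mathscr N_X(A)\). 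For the second hypothesis, assume \(\Phi_n \in [\mathbf 0; A_n, 2^{-n}]\) for each \(n\), write \(U_n = \overrightarrow{T}_{\mathrm{II},n}(\Phi_n,\mathscr N_X(A_n))\), and argue contrapositively: supposing \(\{U_n\} \in \Gamma_X(\mathcal B)\), I would show \(\Phi_n \to \mathbf 0\) in \(MU_{\mathcal B}(X)\). Fixing \(B \in \mathcal B\) and \(\varepsilon > 0\), co-finiteness of the \(\gamma\)-cover gives \(B \subseteq U_n\) for all but finitely many \(n\); restricting to the cofinitely many \(n\) that also satisfy \(2^{-n} < \varepsilon\), each branch of the definition of \(U_n\) yields \(\Phi_n(x) \subseteq (-2^{-n},2^{-n}) \subseteq (-\varepsilon,\varepsilon)\) for all \(x \in B\), i.e.\ \(\Phi_n \in [\mathbf 0; B, \varepsilon]\). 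As \(B\) and \(\varepsilon\) are arbitrary, \(\Phi_n \to \mathbf 0\), contradicting \(\{\Phi_n\} \in \neg\Gamma_{MU_{\mathcal B}(X),\mathbf 0}\); hence \(\{U_n\} \in \neg\Gamma_X(\mathcal B)\).

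I expect the main obstacle to be the diagonal mismatch in \ref{prop:ConvergenceGamesB}: the translated open sets carry the shrinking radius \(2^{-n}\), whereas convergence in \(MU_{\mathcal B}(X)\) must be tested against every fixed \(\varepsilon\). The \(\gamma\)-structure is precisely what closes this gap, since co-finiteness of \(\{n : B \subseteq U_n\}\) together with \(2^{-n} \to 0\) forces \(B\) to be eventually absorbed by \(\Phi_n^\leftarrow[(-\varepsilon,\varepsilon)]\); a bare \(\omega\)-cover would not. The only other delicate point is the degenerate branch \(\Phi_n^\leftarrow[(-2^{-n},2^{-n})] = X\), but there \(\Phi_n\) is uniformly within \(2^{-n}\) of \(\mathbf 0\), so \(\Phi_n \in [\mathbf 0; B, \varepsilon]\) holds automatically and the case only aids convergence.
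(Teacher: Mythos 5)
Your proof is correct and takes essentially the same route as the paper: part \ref{prop:ConvergenceGamesA} is the paper's observation that convergence implies clustering (you merely formalize it through identity translation maps), and part \ref{prop:ConvergenceGamesB} uses exactly the paper's translation maps \(\overleftarrow{T}_{\mathrm{I},n}(\mathscr N_X(A)) = [\mathbf 0; A, 2^{-n}]\) and \(\overrightarrow{T}_{\mathrm{II},n}(\Phi,\mathscr N_X(A)) = \Phi^\leftarrow[(-2^{-n},2^{-n})]\), with the verification of the second hypothesis stated in contrapositive form. Your two refinements are in fact small improvements on the paper's write-up: the degenerate branch \(\Phi^\leftarrow[(-2^{-n},2^{-n})] = X\) (which the paper ignores even though \(X \notin \mathscr T_X\), so its map is not literally well-defined into \(\mathscr N_X(A)\)) and the contrapositive phrasing, which avoids the paper's overstatement that non-convergence yields a full tail outside some \([\mathbf 0; B, \varepsilon]\) when it only yields infinitely many such terms.
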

\begin{proof}
    \ref{prop:ConvergenceGamesA} is evident since, if Two can avoid clustering at \(\mathbf 0\), they can surely
    avoid converging to \(\mathbf 0\).

    \ref{prop:ConvergenceGamesB}
    Fix \(U_0 \in \mathscr T_X\) and
    define \(\overleftarrow{T}_{\mathrm{I},n} : \mathscr N_X[\mathcal A] \to \mathscr N_{MU_{\mathcal A}(X),\mathbf 0}\)
    by \(\overleftarrow{T}_{\mathrm{I},n}(\mathscr N_X(A)) = \left[\mathbf 0; A , 2^{-n}\right]\).
    Then define \(\overrightarrow{T}_{\mathrm{II},n} : MU(X) \times \mathscr N_X[\mathcal A] \to \mathscr T_X\) by
    \(\overrightarrow{T}_{\mathrm{II},n}(\Phi, \mathscr N_X(A)) = \Phi^{\leftarrow}\left[\left(-2^{-n},2^{-n}\right)\right]\).
    Note that, if \(\Phi \in \left[\mathbf 0; A , 2^{-n}\right] = \overleftarrow{T}_{\mathrm{I},n}(\mathscr N_X(A)),\)
    then \(A \subseteq \Phi^\leftarrow\left[\left(-2^{-n},2^{-n}\right)\right]\), which establishes that
    \(\overrightarrow{T}_{\mathrm{II},n}(\Phi, \mathscr N_X(A)) \in \mathscr N_X(A)\).

    Suppose we have
    \[\left\langle \Phi_n : n \in \omega \right\rangle \in \prod_{n\in\omega} \overleftarrow{T}_{\mathrm{I},n}(\mathscr N_X(A_n))\]
    for a sequence \(\langle A_n : n \in \omega \rangle\) of \(\mathcal A\)
    so that \(\langle \Phi_n : n \in \omega \rangle \not\in \Gamma_{MU_{\mathcal B}(X),\mathbf 0}\).
    Then we can find \(B \in \mathcal B\), \(\varepsilon > 0\), and \(N \in \omega\) so that \(2^{-N} < \varepsilon\) and,
    for all \(n \geq N\), \(\Phi_n \not\in [\mathbf 0; B, \varepsilon]\).

    To finish this application of Theorem \ref{TranslationTheorem}, we need to show that \[B \not\subseteq
    \overrightarrow{T}_{\mathrm{II},n}(\Phi_n, \mathscr N_X(A_n))\] for all \(n \geq N\).
    So let \(n \geq N\) and note that, since \(\Phi_n \not\in [\mathbf 0; B, \varepsilon]\),
    there is some \(x \in B\) and \(y \in \Phi_n(x)\) so that \(|y| \geq \varepsilon > 2^{-N} \geq 2^{-n}\).
    That is, \(\Phi_n(x) \not\subseteq (-2^{-n} , 2^{-n})\) and so
    \(x \not\in \overrightarrow{T}_{\mathrm{II},n}(\Phi_n, \mathscr N_X(A_n))\).
    This finishes the proof.
\end{proof}

Though particular applications of Corollaries \ref{cor:Rothberger} and \ref{cor:PointOpen}
abound, we record a few that capture the general spirit using ideals of usual interest
after recalling some other facts and some names for particular selection principles.
We suppress the relationships with \(C_{\mathcal A}(X)\) in the following applications
in the interest of space.
\begin{definition}
    We identify some particular selection principles by name.
    \begin{itemize}
        \item
        \(\mathsf S_1(\Omega_{X,x}, \Omega_{X,x})\) is known as the \textdefn{strong countable fan-tightness property
        for \(X\) at \(x\)}.
        \item
        \(\mathsf S_1(\mathscr D_{X}, \Omega_{X,x})\) is known as the \textdefn{strong countable dense fan-tightness property
        for \(X\) at \(x\)}.
        \item
        \(\mathsf S_1(\mathscr T_X, \mathrm{CD}_X)\) is known as the \textdefn{discretely selective property for \(X\)}.
        \item
        We refer to \(\mathsf S_1(\Omega_X,\Omega_X)\) as the \textdefn{\(\omega\)-Rothberger property}
        and \(\mathsf S_1(\mathcal K_X,\mathcal K_X)\) as the \textdefn{\(k\)-Rothberger property}.
    \end{itemize}
\end{definition}

\begin{definition}
    For a partially ordered set \((\mathbb P, \leq)\) and collections \(\mathcal A , \mathcal B \subseteq \mathbb P\)
    so that, for every \(B \in \mathcal B\), there exists some \(A \in \mathcal A\) with \(B \subseteq A\),
    we define the \textdefn{cofinality of \(\mathcal A\) relative to \(\mathcal B\)} by
    \[\mathrm{cof}(\mathcal A; \mathcal B, \leq)
    = \min \{ \kappa \in \mathrm{CARD} : (\exists \mathscr F \in [\mathcal A]^\kappa)(\forall B \in \mathcal B)(\exists A \in \mathscr F)
    \ B \subseteq A\}\]
    where \(\mathrm{CARD}\) is the class of cardinals.
\end{definition}
\begin{lemma} \label{lem:TelgarskyCofinal}
    Let \(\mathcal A, \mathcal B \subseteq \wp^+(X)\) for a space \(X\).

    As long as \(X\) is \(T_1\),
    \[\mathrm{I} \underset{\mathrm{pre}}{\uparrow} \mathsf G_1(\mathscr N_X[\mathcal A] , \neg \mathcal O_X(\mathcal B))
    \iff \mathrm{cof}(\mathcal A;\mathcal B, \subseteq) \leq \omega.\]
    (See {\cite{GerlitsNagy,TkachukCpBook}, and \cite[Lemma 23]{CHContinuousFunctions}}.)

    If \(\mathcal A\) consists of \(G_\delta\) sets,
    \begin{align*}
        \mathrm{I} \uparrow \mathsf G_1(\mathscr N_X[\mathcal A] , \neg \mathcal O_X(\mathcal B))
        &\iff \mathrm{I} \underset{\mathrm{pre}}{\uparrow} \mathsf G_1(\mathscr N_X[\mathcal A] , \neg \mathcal O_X(\mathcal B))\\
        &\iff \mathrm{cof}(\mathcal A;\mathcal B, \subseteq) \leq \omega.
    \end{align*}
    (See {\cite{Galvin1978,Telgarsky1975} and \cite[Lemma 24]{CHContinuousFunctions}}.)
\end{lemma}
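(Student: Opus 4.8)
The plan is to prove both displayed equivalences by translating, in each case, a strategy for One directly into a cofinal family in \((\mathcal A,\subseteq)\) for \(\mathcal B\). Recall that in \(\mathsf G_1(\mathscr N_X[\mathcal A], \neg\mathcal O_X(\mathcal B))\) One plays sets \(\mathscr N_X(A_n)\) with \(A_n\in\mathcal A\) and Two responds with a proper open \(U_n\supseteq A_n\); Two wins exactly when \(\{U_n:n\in\omega\}\) fails to be a \(\mathcal B\)-cover, so One wins precisely when the responses are forced to form one. A predetermined strategy for One is therefore nothing more than a sequence \(\langle A_n:n\in\omega\rangle\) in \(\mathcal A\), and the content of the first equivalence is that such a sequence forces a \(\mathcal B\)-cover against every response of Two if and only if \(\{A_n:n\in\omega\}\) is cofinal for \(\mathcal B\) under \(\subseteq\).

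For the easy direction (\(\Leftarrow\)), given a countable \(\mathscr F=\{A_n:n\in\omega\}\subseteq\mathcal A\) witnessing \(\mathrm{cof}(\mathcal A;\mathcal B,\subseteq)\leq\omega\), I would let One's predetermined strategy play \(\mathscr N_X(A_n)\) on turn \(n\). Any legal response \(U_n\supseteq A_n\) then satisfies: for each \(B\in\mathcal B\) there is an \(n\) with \(B\subseteq A_n\subseteq U_n\), so \(\{U_n\}\) is a \(\mathcal B\)-cover (the covering of \(X\) itself being automatic once singletons lie in \(\mathcal B\), as they do for ideals), and One wins. This direction uses no separation hypothesis. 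For the converse (\(\Rightarrow\)) I would take a winning predetermined strategy, read off \(A_n\in\mathcal A\) from \(\sigma(n)=\mathscr N_X(A_n)\), and show \(\{A_n\}\) is cofinal. If not, fix \(B\in\mathcal B\) with \(B\not\subseteq A_n\) for every \(n\) and pick \(b_n\in B\setminus A_n\). Here is the only place \(T_1\) enters: since points are closed, \(U_n:=X\setminus\{b_n\}\) is a proper open set containing \(A_n\), hence a legal response, and \(b_n\in B\setminus U_n\) shows no \(U_n\) contains \(B\); thus Two defeats \(\sigma\), a contradiction.

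For the second group of equivalences I would keep the argument above, noting that when \(\mathcal A\) consists of \(G_\delta\) sets the \(T_1\) step can be replaced: writing \(A_n=\bigcap_k W_{n,k}\) with each \(W_{n,k}\) proper open and containing \(A_n\), the failure \(B\not\subseteq A_n\) yields some \(k\) with \(B\not\subseteq W_{n,k}\), and \(U_n:=W_{n,k}\) does the same job, so \(\mathrm{I}\underset{\mathrm{pre}}{\uparrow}\iff\mathrm{cof}(\mathcal A;\mathcal B,\subseteq)\leq\omega\) again. Since a predetermined strategy is in particular a full-information one, \(\mathrm{I}\underset{\mathrm{pre}}{\uparrow}\Rightarrow\mathrm{I}\uparrow\) is immediate, so the real work is \(\mathrm{I}\uparrow\Rightarrow\mathrm{cof}(\mathcal A;\mathcal B,\subseteq)\leq\omega\), which closes the cycle. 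The plan is a Galvin--Telg\'{a}rsky strategy-tree argument. Given a full-information winning \(\sigma\), I would build a tree indexed by \(\omega^{<\omega}\): at a node \(s\) One has played \(\mathscr N_X(A_s)\); I fix a proper-open \(G_\delta\) representation \(A_s=\bigcap_k W_{s,k}\) and let Two's only canonical responses be the countably many \(W_{s,k}\), passing to the child \(s^\frown k\) where One's reply determines \(A_{s^\frown k}\). As the tree is countably branching, \(\mathscr F:=\{A_s:s\in\omega^{<\omega}\}\) is countable, and I would claim it is cofinal for \(\mathcal B\). If some \(B\in\mathcal B\) lay in no \(A_s\), then at every node the relation \(B\not\subseteq A_s=\bigcap_k W_{s,k}\) supplies a \(k_s\) with \(B\not\subseteq W_{s,k_s}\); following these choices produces a single branch along which Two's canonical plays never contain \(B\), so the resulting \(\sigma\)-consistent run is a win for Two, contradicting that \(\sigma\) is winning.

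The crux, and the step I expect to be the main obstacle, is this last reduction: converting a full-information strategy into a countable cofinal family. The difficulty is that Two has uncountably many legal responses \(U\supseteq A_s\) at each node, so the naive strategy tree is wildly branching and yields no countable family. The \(G_\delta\) hypothesis is exactly what tames this—it confines Two's responses to the countable canonical set \(\{W_{s,k}:k\in\omega\}\) without loss, since any witness \(B\) escaping \(A_s\) already escapes one of the \(W_{s,k}\). I would take care over two bookkeeping points: ensuring each \(W_{s,k}\) can be taken proper (intersect a fixed proper open superset of \(A_s\), which exists whenever One's move \(\mathscr N_X(A_s)\) is non-empty and hence legally answerable), and verifying that the branch assembled from the \(k_s\) is genuinely consistent with \(\sigma\), so that the fabricated run is a legitimate play defeating it.
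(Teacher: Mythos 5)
The paper itself contains no proof of this lemma: it is imported from the literature (\cite{GerlitsNagy}, \cite{Galvin1978}, \cite{Telgarsky1975}, \cite{TkachukCpBook}, and \cite[Lemmas 23 and 24]{CHContinuousFunctions}), so the only in-paper point of comparison is the proof of the later, unnumbered analogue concerning \(\mathrm{DO}_X\) and \(wkc(\mathcal A)\). Measured against either, your argument is correct and follows the same route: the predetermined case is handled by playing a countable cofinal family in one direction and, in the other, by defeating a non-cofinal sequence with the co-singleton responses \(X\setminus\{b_n\}\) under \(T_1\); the full-information case is the Galvin--Telg\'arsky tree argument, where the \(G_\delta\) hypothesis confines Two, without loss, to countably many canonical responses per node. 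In the paper's \(wkc\) lemma the same tree device appears with the metric enlargements \(\mathbb B(\sigma(w),2^{-\ell})\) of compact sets playing the role of your sets \(W_{s,k}\); your remark that properness can be arranged by intersecting with a fixed proper open superset of \(A_s\) is the right bookkeeping (it does not change the intersection), and the branch assembled from the choices \(k_s\) is a legitimate \(\sigma\)-consistent run, so the contradiction goes through.

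One substantive caveat, which you noticed and which deserves emphasis: as transcribed, the lemma allows arbitrary \(\mathcal A,\mathcal B\subseteq\wp^+(X)\), and in that generality the implication \(\mathrm{cof}(\mathcal A;\mathcal B,\subseteq)\leq\omega\implies\mathrm{I}\underset{\mathrm{pre}}{\uparrow}\mathsf G_1(\mathscr N_X[\mathcal A],\neg\mathcal O_X(\mathcal B))\) fails, because members of \(\mathcal O_X(\mathcal B)\) must in particular cover \(X\): for \(X=\mathbb R\) and \(\mathcal A=\mathcal B=\{\{0\}\}\) the cofinality is \(1\), yet Two wins every run by always answering \((-1,1)\). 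Your parenthetical repair --- covering is automatic once \(\bigcup\mathcal B=X\), e.g.\ when \(\mathcal B\) contains all singletons, as the paper's ideals of closed sets do --- is precisely the hypothesis under which the cited sources state the result and under which the paper applies it, so your proof establishes the lemma in the form actually used. The remaining loose ends (Two needs \(|X|\geq 2\) for \(X\setminus\{b_n\}\) to be a legal move; One's move \(\mathscr N_X(A)\) must be non-empty to be answerable) are degenerate cases you already flag.
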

Observe that Lemma \ref{lem:TelgarskyCofinal} informs us that, for a \(T_1\) space \(X\),
\begin{itemize}
    \item
    \(\mathrm{I} \underset{\mathrm{pre}}{\uparrow} \mathsf G_1(\mathbb P_X, \neg\mathcal O_X)\) if and only if \(X\) is countable,
    \item
    \(\mathrm{I} \underset{\mathrm{pre}}{\uparrow} \mathsf G_1(\mathscr N_X[K(X)], \neg\mathcal O_X)\) if and only if
    \(X\) is \(\sigma\)-compact, and
    \item
    \(\mathrm{I} \underset{\mathrm{pre}}{\uparrow} \mathsf G_1(\mathscr N_X[K(X)], \neg\mathcal K_X)\) if and only if \(X\) is hemicompact.
\end{itemize}
\begin{corollary}
    For an ideal \(\mathcal A\) of closed subsets of a \(T_1\) space \(X\), \(\mathrm{cof}(\mathcal A; \mathcal A, \subseteq) \leq \omega\)
    if and only if \(MU_{\mathcal A}(X)\) is metrizable.
\end{corollary}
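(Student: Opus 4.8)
The plan is to view \(MU_{\mathcal A}(X)\) as the Hausdorff uniform space it is and reduce to Theorem \ref{thm:UniformMetrizability}, which makes metrizability equivalent to the generating uniformity, with base \(\{\mathbf W(A,\varepsilon) : A \in \mathcal A, \varepsilon > 0\}\), admitting a countable base. First I would record that this uniformity is Hausdorff: if \(\Phi \neq \Psi\) in \(MU(X)\) they differ at some \(x\), so \(\langle \Phi(x),\Psi(x)\rangle \notin h\Delta_\varepsilon\) for small \(\varepsilon\); since \(\mathcal A\) is an ideal of closed sets of a \(T_1\) space it contains \(\{x\}\), and \(\mathbf W(\{x\},\varepsilon)\) then separates \(\Phi\) from \(\Psi\). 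So the whole statement becomes: \(\mathrm{cof}(\mathcal A;\mathcal A,\subseteq)\le\omega\) if and only if the uniformity has a countable base.

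For the forward implication, suppose \(\{A_n : n \in \omega\}\subseteq\mathcal A\) is cofinal. I would show \(\{\mathbf W(A_n,2^{-m}) : n,m\in\omega\}\) is a countable base. The only ingredient is the evident monotonicity of \(\mathbf W\): if \(A \subseteq A'\) then \(\mathbf W(A',\varepsilon)\subseteq\mathbf W(A,\varepsilon)\), and if \(\delta\le\varepsilon\) then \(\mathbf W(A,\delta)\subseteq\mathbf W(A,\varepsilon)\). Hence, given a basic \(\mathbf W(A,\varepsilon)\), choosing \(n\) with \(A\subseteq A_n\) and \(m\) with \(2^{-m}\le\varepsilon\) gives \(\mathbf W(A_n,2^{-m})\subseteq\mathbf W(A,\varepsilon)\). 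Together with Hausdorffness, Theorem \ref{thm:UniformMetrizability} yields metrizability.

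For the reverse implication, metrizability supplies a countable base \(\{E_k : k\in\omega\}\); for each \(k\) fix \(A_k\in\mathcal A\) and \(\varepsilon_k>0\) with \(\mathbf W(A_k,\varepsilon_k)\subseteq E_k\). I claim \(\{A_k : k\in\omega\}\) is cofinal. Given \(A\in\mathcal A\), since \(\mathbf W(A,1)\) is an entourage there is \(k\) with \(E_k\subseteq\mathbf W(A,1)\), so \(\mathbf W(A_k,\varepsilon_k)\subseteq\mathbf W(A,1)\); it then suffices to prove the containment lemma that \(\mathbf W(A_k,\varepsilon_k)\subseteq\mathbf W(A,1)\) forces \(A\subseteq A_k\). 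To prove it, suppose \(x\in A\setminus A_k\) and build a single witness \(\Psi\in MU(X)\) that is uniformly close to \(\mathbf 0\) on \(A_k\) but far from \(\mathbf 0\) at \(x\). Concretely, apply Corollary \ref{cor:UscoAgreeOpen} with \(\Phi=\mathbf 0\), the closed set \(A_k\) in the role of its \(A\), an open \(U\supseteq A_k\) with \(x\notin U\), a sandwiching open \(V\) with \(A_k\subseteq V\subseteq\mathrm{cl}(V)\subseteq U\), and \(y_0=1\); this returns \(\Psi=\overline g\in MU(X)\) with \(\langle\mathbf 0,\Psi\rangle\in\mathbf W(A_k,E)\) for every entourage \(E\) (in particular \(\langle\mathbf 0,\Psi\rangle\in\mathbf W(A_k,\varepsilon_k)\)), while \(g[X\setminus U]=\{1\}\), so \(1\in\Psi(x)\) and hence \(\langle\mathbf 0,\Psi\rangle\notin\mathbf W(A,1)\). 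This contradicts the containment and forces \(A\subseteq A_k\), giving cofinality.

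The main obstacle is precisely this last construction, and more sharply the existence of the sandwiching open set \(V\) with \(A_k\subseteq V\subseteq\mathrm{cl}(V)\subseteq X\setminus\{x\}\): separating the closed set \(A_k\) from the point \(x\) with closure room is what feeds Corollary \ref{cor:UscoAgreeOpen} and lets quasicontinuity together with subcontinuity (Theorem \ref{thm:HolaHolyChar}) deliver a genuine minimal usco witness rather than a mere set-valued map. This is exactly where the separation hypotheses on \(X\) are consumed; without enough room to separate \(x\) from \(A_k\), one cannot keep \(\Psi\) near \(\mathbf 0\) on all of \(A_k\) while spiking at \(x\), so securing this separation is the crux of the reverse implication.
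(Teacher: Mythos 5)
Your forward implication is exactly the paper's (a cofinal family \(\{A_n\}\) yields the countable base \(\{\mathbf W(A_n,2^{-m}) : n,m\in\omega\}\), then Theorem \ref{thm:UniformMetrizability}), and your verification that the uniformity is Hausdorff is a worthwhile addition the paper leaves implicit. The reverse implication, however, opens with a step that does not hold as stated: you extract a countable base \(\{E_k\}\) for the \emph{uniformity} from metrizability of the \emph{topology}. Theorem \ref{thm:UniformMetrizability} (Kelley's metrization theorem) characterizes when the uniformity itself is generated by a metric; a metrizable topology can be induced by a uniformity with no countable base (for instance, the uniformity \(\mathbb N\) inherits as a dense subspace of \(\beta\mathbb N\) induces the discrete, hence metrizable, topology, yet has no countable base, since otherwise its completion \(\beta\mathbb N\) would be metrizable). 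This gap is local and repairable: metrizability does give first countability at \(\mathbf 0\), so fix a countable neighborhood base \(\{N_k\}\) at \(\mathbf 0\) and choose \(A_k\in\mathcal A\), \(\varepsilon_k>0\) with \([\mathbf 0;A_k,\varepsilon_k]\subseteq N_k\); given \(A\in\mathcal A\), the set \([\mathbf 0;A,1]\) is a neighborhood of \(\mathbf 0\), hence contains some \([\mathbf 0;A_k,\varepsilon_k]\), and your witness \(\Psi\) built from Corollary \ref{cor:UscoAgreeOpen} with \(y_0=1\) lies in \([\mathbf 0;A_k,\varepsilon_k]\setminus[\mathbf 0;A,1]\) whenever there is some \(x\in A\setminus A_k\), forcing \(A\subseteq A_k\). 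Nothing else in your construction needs to change.

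With that repair, your reverse direction is a genuinely different and more self-contained route than the paper's. The paper never returns to the uniformity: it uses first countability to hand One a winning predetermined strategy in \(\mathsf G_1(\mathscr N_{MU_{\mathcal A}(X),\mathbf 0},\neg\Omega_{MU_{\mathcal A}(X),\mathbf 0})\), transfers this to \(\mathsf G_1(\mathscr N_X[\mathcal A],\neg\mathcal O_X(\mathcal A))\) via Corollary \ref{cor:PointOpen}, and concludes with Lemma \ref{lem:TelgarskyCofinal}; your containment lemma accomplishes in one explicit construction what those game translations do in general, at the price of being tied to this particular statement. The caveat you flag honestly --- the sandwich \(A_k\subseteq V\subseteq\mathrm{cl}(V)\subseteq X\setminus\{x\}\), an instance of \(\mathcal A\)-normality that the stated \(T_1\) hypothesis does not supply --- is a real mismatch with the statement, but you should know the paper's own proof has the same defect, and worse: its citation of Corollary \ref{cor:PointOpen} silently imports functional \(\mathcal A\)-normality and sequential compactness of the members of \(\mathcal A\). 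So the separation you consume is strictly weaker than what the paper's argument consumes, and the uniformity-base step is the only gap genuinely attributable to your proposal.
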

\begin{proof}
    If \(\{ A_n : n \in \omega \} \subseteq \mathcal A\) is so that, for every \(A \in \mathcal A\), there is an \(n\in\omega\)
    with \(A \subseteq A_n\), notice that the family \(\{ \mathbf W(A_n,2^{-m}) : n,m \in \omega\}\) is a countable
    base for the uniformity on \(MU_{\mathcal A}(X)\); so Theorem \ref{thm:UniformMetrizability} demonstrates
    that \(MU_{\mathcal A}(X)\) is metrizable.

    Now, suppose \(MU_{\mathcal A}(X)\) is metrizable, which implies that \(MU_{\mathcal A}(X)\) is first-countable.
    Using a descending countable basis at \(\mathbf 0\), we see that
    \[\mathrm{I} \underset{\mathrm{pre}}{\uparrow} \mathsf G_1(\mathscr N_{MU_{\mathcal A}(X),\mathbf 0} ,
    \neg \Gamma_{MU_{\mathcal A}(X),\mathbf 0}),\]
    and, in particular,
    \[\mathrm{I} \underset{\mathrm{pre}}{\uparrow} \mathsf G_1(\mathscr N_{MU_{\mathcal A}(X),\mathbf 0} ,
    \neg \Omega_{MU_{\mathcal A}(X),\mathbf 0}).\]
    By Corollary \ref{cor:PointOpen}, we see that
    \[\mathrm{I} \underset{\mathrm{pre}}{\uparrow} \mathsf G_1(\mathscr N_X[\mathcal A], \neg \mathcal O_X(\mathcal A)).\]
    So, by Lemma \ref{lem:TelgarskyCofinal}, \(\mathrm{cof}(\mathcal A; \mathcal A, \subseteq) \leq \omega\).
\end{proof}
As a particular consequence of this, we see that
\begin{corollary} \label{cor:ParticularMetrizability}
    For any regular space \(X\), the following are equivalent.
    \begin{enumerate}[label=(\roman*)]
        \item
        \(X\) is countable.
        \item
        \(MU_p(X)\) is metrizable.
        \item
        \(MU_p(X)\) is not discretely selective.
        \item
        \(\mathrm{II} \underset{\mathrm{mark}}{\uparrow} \mathsf G_1(\Omega_X,\Omega_X)\).
        \item
        \(\mathrm{II} \underset{\mathrm{mark}}{\uparrow} \mathsf G_1(\Omega_{MU_p(X),\mathbf 0},\Omega_{MU_p(X),\mathbf 0})\).
        \item
        \(\mathrm{II} \underset{\mathrm{mark}}{\uparrow} \mathsf G_1(\mathscr D_{MU_p(X)},\Omega_{MU_p(X),\mathbf 0})\).
    \end{enumerate}
    Also, the following are equivalent.
    \begin{enumerate}[label=(\roman*)]
        \item
        \(X\) is hemicompact.
        \item
        \(\mathbb K(X)\) is hemicompact. (See \cite[Thm. 3.22]{CHCompactOpen}.)
        \item
        \(MU_k(X)\) is metrizable. (See \cite[Cor. 4.5]{HolaHoly2022}.)
        \item
        \(MU_k(X)\) is not discretely selective.
        \item
        \(\mathrm{II} \underset{\mathrm{mark}}{\uparrow} \mathsf G_1(\mathcal K_X,\mathcal K_X)\).
        \item
        \(\mathrm{II} \underset{\mathrm{mark}}{\uparrow} \mathsf G_1(\Omega_{MU_k(X),\mathbf 0},\Omega_{MU_k(X),\mathbf 0})\).
        \item
        \(\mathrm{II} \underset{\mathrm{mark}}{\uparrow} \mathsf G_1(\mathscr D_{MU_k(X)},\Omega_{MU_k(X),\mathbf 0})\).
    \end{enumerate}
\end{corollary}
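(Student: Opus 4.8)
The plan is to read both equivalence lists as the two canonical instances $\mathcal A = \mathcal B = [X]^{<\omega}$ and $\mathcal A = \mathcal B = K(X)$ of the machinery already assembled, noting that for regular $X$ both ideals are contained in $K(X)$ and hence make $X$ into an $\mathcal A$-normal space. Throughout I will use that $\mathcal O_X([X]^{<\omega}) = \Omega_X$ and $\mathcal O_X(K(X)) = \mathcal K_X$, that $\mathsf S_1(\mathcal C, \mathcal D)$ is the negation of $\mathrm{I}\underset{\mathrm{pre}}{\uparrow}\mathsf G_1(\mathcal C,\mathcal D)$ (so ``not discretely selective'' means $\mathrm{I}\underset{\mathrm{pre}}{\uparrow}\mathsf G_1(\mathscr T_{MU_{\mathcal A}(X)},\mathrm{CD}_{MU_{\mathcal A}(X)})$), and that $\equiv$ and duality transport the four strategic clauses in the indicated directions.

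First the metrizability clauses. By the corollary immediately preceding this one, $MU_{\mathcal A}(X)$ is metrizable iff $\mathrm{cof}(\mathcal A;\mathcal A,\subseteq) \leq \omega$. For $\mathcal A = [X]^{<\omega}$ a countable $\subseteq$-cofinal family of finite sets exists exactly when $X$ is countable (its union must exhaust $X$), giving $\text{(i)}\iff\text{(ii)}$ of the first list; for $\mathcal A = K(X)$ the cofinality condition is by definition hemicompactness, giving $\text{(i)}\iff\text{(iii)}$ of the second list, while $\text{(i)}\iff\text{(ii)}$ of the second list is the cited $\mathbb K(X)$ result.

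Next the Markov-strategy clauses. Applying Corollary \ref{cor:Rothberger} with $\mathcal A = \mathcal B$ yields $\mathsf G_1(\mathcal O_X(\mathcal A), \mathcal O_X(\mathcal A)) \equiv \mathsf G_1(\Omega_{MU_{\mathcal A}(X),\mathbf 0}, \Omega_{MU_{\mathcal A}(X),\mathbf 0}) \equiv \mathsf G_1(\mathscr D_{MU_{\mathcal A}(X)}, \Omega_{MU_{\mathcal A}(X),\mathbf 0})$, and since $\equiv$ preserves $\mathrm{II}\underset{\mathrm{mark}}{\uparrow}$ this delivers $\text{(iv)}\iff\text{(v)}\iff\text{(vi)}$ (first list) and $\text{(v)}\iff\text{(vi)}\iff\text{(vii)}$ (second list). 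To tie these to the domain I would invoke the duality half of Corollary \ref{cor:Rothberger}: $\mathsf G_1(\mathcal O_X(\mathcal A),\mathcal O_X(\mathcal A))$ is dual to $\mathsf G_1(\mathscr N_X[\mathcal A], \neg\mathcal O_X(\mathcal A))$, so the clause $\mathrm{II}\underset{\mathrm{mark}}{\uparrow}\mathsf G_1(\mathcal O_X(\mathcal A),\mathcal O_X(\mathcal A))$ is equivalent to $\mathrm{I}\underset{\mathrm{pre}}{\uparrow}\mathsf G_1(\mathscr N_X[\mathcal A], \neg\mathcal O_X(\mathcal A))$, which by Lemma \ref{lem:TelgarskyCofinal} and the consequences displayed after it reads ``$X$ countable'' for $\mathcal A = [X]^{<\omega}$ and ``$X$ hemicompact'' for $\mathcal A = K(X)$. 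This closes $\text{(i)}\iff\text{(iv)}$ (first list) and $\text{(i)}\iff\text{(v)}$ (second list).

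Finally the discrete-selection clause ((iii) of the first list, (iv) of the second). One direction is cheap: the reduction $\mathsf G_1(\mathscr T_{MU_{\mathcal A}(X)}, \mathrm{CD}_{MU_{\mathcal A}(X)}) \leq_{\mathrm{II}} \mathsf G_1(\mathscr T_{MU_{\mathcal A}(X)}, \neg\Omega_{MU_{\mathcal A}(X),\mathbf 0})$ recorded in the proof of Theorem \ref{thm:SecondTheorem}, combined with the $\mathcal H$-chain of Corollary \ref{cor:Rothberger} and the cofinality computation, shows that $X$ countable (resp. hemicompact) forces failure of discrete selectivity. The reverse implication is the crux, and it is where the two lists diverge. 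For the finite ideal it is immediate from Theorem \ref{thm:SecondTheorem}, since finite sets are sequentially compact: one gets $\mathsf G_1(\mathscr N_X[[X]^{<\omega}], \neg\Omega_X) \equiv \mathsf G_1(\mathscr T_{MU_p(X)}, \mathrm{CD}_{MU_p(X)})$, and transporting $\mathrm{I}\underset{\mathrm{pre}}{\uparrow}$ back through Lemma \ref{lem:TelgarskyCofinal} yields countability. For the compact ideal, however, Theorem \ref{thm:SecondTheorem} is stated only for $\mathcal B$ consisting of \emph{sequentially} compact sets, whereas $K(X)$ need not. \textbf{This is the main obstacle.} I expect to resolve it by observing that sequential compactness enters the proof of Theorem \ref{thm:SecondTheorem} solely through Lemma \ref{lem:SequentiallyCompact}, i.e. only to guarantee that $\Phi[B]$ is bounded for $\Phi \in MU(X)$ and $B \in \mathcal B$; and for a genuinely compact $B$ this boundedness is automatic from upper semicontinuity, since covering $B$ by the open sets $\Phi^\leftarrow[(-m,m)]$ and extracting a finite subcover bounds $\Phi[B]$. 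Hence the conclusion of Theorem \ref{thm:SecondTheorem} remains valid for $\mathcal A = \mathcal B = K(X)$, giving $\mathsf G_1(\mathscr N_X[K(X)], \neg\mathcal K_X) \equiv \mathsf G_1(\mathscr T_{MU_k(X)}, \mathrm{CD}_{MU_k(X)})$ and, exactly as in the finite case, $\text{(iv)}\iff\text{(i)}$ of the second list, completing both chains of equivalences.
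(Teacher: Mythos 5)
Your proposal is correct, and its skeleton is exactly the paper's (implicit) argument: the paper offers no standalone proof of this corollary, reading it off from the immediately preceding cofinality--metrizability corollary, Lemma \ref{lem:TelgarskyCofinal} and the consequences listed after it, the equivalences and dualities of Corollaries \ref{cor:Rothberger} and \ref{cor:PointOpen}, and Theorem \ref{thm:SecondTheorem} --- precisely the ingredients you assemble, with the same bookkeeping between \(\mathrm{II} \underset{\mathrm{mark}}{\uparrow}\), \(\mathrm{I} \underset{\mathrm{pre}}{\uparrow}\), and cofinality. Where you add genuine value is in the final paragraph: you are right that Theorem \ref{thm:SecondTheorem} and Corollary \ref{cor:PointOpen} are stated only for \(\mathcal B\) consisting of \emph{sequentially} compact sets, that \(K(X)\) need not satisfy this for a regular space \(X\) (take \(X\) containing a copy of \(\beta\omega\)), and that the paper nevertheless applies these results with \(\mathcal B = K(X)\) without comment; strictly read, only the cheap direction (hemicompact implies \(MU_k(X)\) is not discretely selective) is covered by the stated hypotheses, and the converse direction of the second list's item (iv) rests on an application of Theorem \ref{thm:SecondTheorem} outside them. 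Your repair is valid: sequential compactness enters that proof only through Lemma \ref{lem:SequentiallyCompact}, i.e.\ only to bound \(\Phi[B]\), and for compact \(B\) the increasing family \(\{\Phi^\leftarrow[(-m,m)] : m \in \omega\}\) is an open cover of \(B\) (open by upper semicontinuity, covering since each \(\Phi(x)\) is compact and bounded), so a finite subcover gives \(\Phi[B] \subseteq (-M,M)\) for some \(M\). So yours is not a different route but a corrected version of the paper's route; the one thing to make explicit in a final write-up is that you are re-proving Theorem \ref{thm:SecondTheorem} under the weakened hypothesis that \(\mathcal B\) consists of compact sets, rather than merely citing it.
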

Before the next corollary, we recall Tkachuk's strategy strengthening for the generalized point-open games.
\begin{theorem}[See {\cite[Cor. 11]{CHContinuousFunctions}} and {\cite{TkachukCpBook}}] \label{thm:TkachukStrengthening}
    Let \(\mathcal A\) and \(\mathcal B\) be ideals of closed subsets of \(X\).
    Then
    \[\mathrm{I} \uparrow \mathsf G_1(\mathscr N_X[\mathcal A], \neg \mathcal O_X(\mathcal B))
    \iff \mathrm{I} \uparrow \mathsf G_1(\mathscr N_X[\mathcal A], \neg \Gamma_X(\mathcal B))\]
    and
    \[\mathrm{I} \underset{\mathrm{pre}}{\uparrow} \mathsf G_1(\mathscr N_X[\mathcal A], \neg \mathcal O_X(\mathcal B))
    \iff \mathrm{I} \underset{\mathrm{pre}}{\uparrow} \mathsf G_1(\mathscr N_X[\mathcal A], \neg \Gamma_X(\mathcal B))\]
\end{theorem}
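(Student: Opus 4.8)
The easy direction is immediate: since $\Gamma_X(\mathcal B) \subseteq \mathcal O_X(\mathcal B)$, any strategy for One that forces Two's selections into $\Gamma_X(\mathcal B)$ also forces them into $\mathcal O_X(\mathcal B)$, so $\mathrm{I}\uparrow\mathsf G_1(\mathscr N_X[\mathcal A],\neg\Gamma_X(\mathcal B))$ implies $\mathrm{I}\uparrow\mathsf G_1(\mathscr N_X[\mathcal A],\neg\mathcal O_X(\mathcal B))$, and likewise for predetermined strategies. The whole content lies in the reverse implication, the \emph{strengthening}. I would stress at the outset that this is genuinely outside the reach of the equivalence machinery developed above: the preorder $\leq_{\mathrm{II}}$ and the relation $\equiv$ only transfer the negative clauses $\mathrm{I}\not\uparrow$ and $\mathrm{I}\underset{\mathrm{pre}}{\not\uparrow}$, whereas here one must transfer the \emph{positive} statements $\mathrm{I}\uparrow$ and $\mathrm{I}\underset{\mathrm{pre}}{\uparrow}$, which is why a dedicated argument (and the citation to Tkachuk) is required.

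The plan is to pass to the dual side, where the combinatorics is more transparent. By Lemma \ref{lem:DualGames}(i), $\mathsf G_1(\mathcal O_X(\mathcal A),\mathcal C)$ is dual to $\mathsf G_1(\mathscr N_X[\mathcal A],\neg\mathcal C)$ for any collection $\mathcal C$; applying this with $\mathcal C = \mathcal O_X(\mathcal B)$ and with $\mathcal C = \Gamma_X(\mathcal B)$ and reading off the duality clauses governing $\mathrm I\uparrow$ and $\mathrm I\underset{\mathrm{pre}}{\uparrow}$, the theorem becomes the assertion
\[
  \mathrm{II}\uparrow\mathsf G_1(\mathcal O_X(\mathcal A),\mathcal O_X(\mathcal B)) \iff \mathrm{II}\uparrow\mathsf G_1(\mathcal O_X(\mathcal A),\Gamma_X(\mathcal B))
\]
together with its Markov analogue. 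In words: a winning strategy for Two that extracts a $\mathcal B$-cover from One's stream of $\mathcal A$-covers can be upgraded to one extracting a $\gamma$-cover. Since $\mathcal B$ is an ideal, Lemma \ref{lem:IdealLargeCovers} gives $\mathcal O_X(\mathcal B) = \Lambda_X(\mathcal B)$, so a $\mathcal B$-cover already enjoys the feature that each $B\in\mathcal B$ lies in infinitely many of its members; the task is to promote ``infinitely often'' to ``cofinitely often.''

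To carry out the upgrade I would run, inside the real play, a bookkeeping of simulated sub-plays of the $\Lambda$-winning strategy $\tau$, indexed via a surjection of $\omega$ onto $\omega^{<\omega}$: One's actual covers are fed to a growing tree of simulations, $\tau$ dictates a selection in each, and Two commits to these along the real stages. Because $\mathcal A$-covers are themselves $\Lambda$-covers (Lemma \ref{lem:IdealLargeCovers} again), there is always cofinal room to realize $\tau$'s dictates; and because $\mathcal B$ is an ideal, the finitely many ``demands'' $B_0,\dots,B_n$ accumulated by stage $n$ can be consolidated into the single element $B_0\cup\cdots\cup B_n\in\mathcal B$, which is the structural feature letting one selected set discharge several demands at once. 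The Markov case runs the same scheme with $\tau$ Markov: each simulated dictate then depends only on the most recently fed cover and a stage counter, so the upgraded strategy again depends only on One's last move and the turn number.

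The main obstacle is precisely this promotion from infinite to cofinite coverage. A naive parallelization that merely restarts $\tau$ on infinitely many disjoint blocks of stages guarantees only that each $B$ is covered \emph{once per block}, which reproduces a $\Lambda$-cover rather than a $\Gamma$-cover. Confining the misses of each fixed $B$ to a finite initial segment requires the prioritized, nested bookkeeping of Tkachuk's amplification, in which later stages are made responsible for ever-larger consolidated demands while earlier threads are retired. This nested induction is exactly the content of \cite[Cor. 11]{CHContinuousFunctions} and \cite{TkachukCpBook}, so at this point I would either invoke those results directly or reproduce their tree/rank induction on $\tau$, and then dualize back to recover the statement for One.
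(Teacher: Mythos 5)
The theorem you set out to prove is not actually proved in the paper: it is imported by citation, and the cited result (\cite[Cor.~11]{CHContinuousFunctions}, with Tkachuk's book as the origin of the technique) is stated for One, for both full and predetermined strategies, in essentially this exact form. So the defensible core of your proposal --- ``invoke those results directly'' --- coincides with what the paper does; but under that reading your duality detour does no work: you pass to the Two-sided formulation only to have to dualize straight back in order to apply a citation that is already in the One-sided form. Your framing observations are correct and worth keeping: the \(\leq_{\mathrm{II}}\)/\(\equiv\) machinery transfers only the clauses \(\mathrm{I}\not\uparrow\) and \(\mathrm{I}\underset{\mathrm{pre}}{\not\uparrow}\), so it cannot yield this theorem, and the inclusion \(\Gamma_X(\mathcal B)\subseteq\mathcal O_X(\mathcal B)\) gives the easy implications.

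The genuine problem is in the only part of your proposal that goes beyond citation, namely the sketch of the amplification. You have Two, on the dual side, consolidating accumulated ``demands'' \(B_0,\ldots,B_n\) into \(B_0\cup\cdots\cup B_n\in\mathcal B\). But elements of \(\mathcal B\) are not moves of either player in either game: they occur only in the winning condition, and \(\mathcal B\) is in general uncountable, so a strategy has no enumeration of demands to accumulate --- nothing in the play corresponds to this step, and the scheme cannot be executed as described. The consolidation that actually drives Tkachuk's strengthening lives on One's side of the \emph{primal} game and uses the ideal property of \(\mathcal A\), not of \(\mathcal B\): since \(\mathscr N_X(A\cup A') = \mathscr N_X(A)\cap\mathscr N_X(A')\), One can, at stage \(n\), play \(\mathscr N_X\) of the union of the prescriptions of the given winning strategy \(\sigma\) along all simulated positions indexed by subsequences of \((U_0,\ldots,U_{n-1})\) (finitely many positions, so the union stays in \(\mathcal A\)); Two's single response \(U_n\) then legally extends every active simulation at once. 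If some \(B\in\mathcal B\) had \(B\not\subseteq U_n\) for infinitely many \(n\), those stages would themselves constitute a legal run against \(\sigma\), so the corresponding selections would form a \(\mathcal B\)-cover and one of them would contain \(B\) --- a contradiction. Hence each \(B\) is contained in all but finitely many \(U_n\), i.e.\ the set of selections lands in \(\Gamma_X(\mathcal B)\). This construction also visibly preserves predetermined strategies, since the consolidated move \(\mathscr N_X(A_0\cup\cdots\cup A_n)\) depends only on the turn number, which yields the second displayed equivalence; no dualization is needed anywhere.
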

\begin{corollary} \label{cor:WeakerThanMetrizability}
    For any regular space \(X\), the following are equivalent.
    \begin{enumerate}[label=(\roman*)]
        \item
        \(\mathrm{II} \uparrow \mathsf G_1(\Omega_X,\Omega_X)\).
        \item
        \(\mathrm{II} \uparrow \mathsf G_1(\Omega_{MU_p(X),\mathbf 0},\Omega_{MU_p(X),\mathbf 0})\).
        \item
        \(\mathrm{II} \uparrow \mathsf G_1(\mathscr D_{MU_p(X)},\Omega_{MU_p(X),\mathbf 0})\).
        \item
        \(\mathrm{I} \uparrow \mathsf G_1(\mathscr T_{MU_p(X)}, \mathrm{CD}_{MU_p(X)})\).
        \item
        \(\mathrm{I} \uparrow \mathsf G_1(\mathscr N_X[[X]^{<\omega}], \neg \Omega_X)\).
        \item
        \(\mathrm{I} \uparrow \mathsf G_1(\mathscr N_X[[X]^{<\omega}], \neg \Gamma_\omega(X))\).
    \end{enumerate}
    Also, the following are equivalent.
    \begin{enumerate}[label=(\roman*)]
        \item
        \(\mathrm{II} \uparrow \mathsf G_1(\mathcal K_X,\mathcal K_X)\).
        \item
        \(\mathrm{II} \uparrow \mathsf G_1(\Omega_{MU_k(X),\mathbf 0},\Omega_{MU_k(X),\mathbf 0})\).
        \item
        \(\mathrm{II} \uparrow \mathsf G_1(\mathscr D_{MU_k(X)},\Omega_{MU_k(X),\mathbf 0})\).
        \item
        \(\mathrm{I} \uparrow \mathsf G_1(\mathscr T_{MU_k(X)}, \mathrm{CD}_{MU_k(X)})\).
        \item
        \(\mathrm{I} \uparrow \mathsf G_1(\mathscr N_X[K(X)], \neg \mathcal K_X)\).
        \item
        \(\mathrm{I} \uparrow \mathsf G_1(\mathscr N_X[K(X)], \neg \Gamma_k(X)\).
    \end{enumerate}
\end{corollary}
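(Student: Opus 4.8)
The plan is to treat both equivalence lists as one parametrized assembly of the transfer results already proved, specializing to $\mathcal A = \mathcal B = [X]^{<\omega}$ for the first list and to $\mathcal A = \mathcal B = K(X)$ for the second. In either case regularity is exactly the hypothesis needed: since $[X]^{<\omega} \subseteq K(X)$ and $K(X) \subseteq K(X)$, and a regular space is $\mathcal A$-normal whenever $\mathcal A \subseteq K(X)$, the space $X$ is $\mathcal A$-normal in both cases. I would also record at the outset the identifications $\mathcal O_X([X]^{<\omega}) = \Omega_X$, $\mathcal O_X(K(X)) = \mathcal K_X$, $\Gamma_X([X]^{<\omega}) = \Gamma_\omega(X)$, and $\Gamma_X(K(X)) = \Gamma_k(X)$, so that the covering games in items (v) and (vi) of each list are literally the games occurring in Corollary \ref{cor:Rothberger} and Theorem \ref{thm:TkachukStrengthening}.

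First I would read items (i), (ii), (iii), and (v) off of Corollary \ref{cor:Rothberger}. Writing $\mathcal G = \mathsf G_1(\mathcal O_X(\mathcal A), \mathcal O_X(\mathcal B))$ and $\mathcal H = \mathsf G_1(\mathscr N_X[\mathcal A], \neg\mathcal O_X(\mathcal B))$, that corollary supplies $\mathcal G \equiv \mathsf G_1(\Omega_{MU_{\mathcal A}(X),\mathbf 0},\Omega_{MU_{\mathcal B}(X),\mathbf 0}) \equiv \mathsf G_1(\mathscr D_{MU_{\mathcal A}(X)},\Omega_{MU_{\mathcal B}(X),\mathbf 0})$ together with the duality of $\mathcal G$ and $\mathcal H$. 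The $\mathrm{II}\uparrow$ clause of the equivalence gives (i) $\iff$ (ii) $\iff$ (iii), and the $\mathrm{II}\uparrow\mathcal G \iff \mathrm{I}\uparrow\mathcal H$ clause of the duality gives (i) $\iff$ (v). Then (v) $\iff$ (vi) is immediate from the $\mathrm{I}\uparrow$ statement of Theorem \ref{thm:TkachukStrengthening} applied with the same (coinciding) $\mathcal A$ and $\mathcal B$, since there the targets $\neg\mathcal O_X(\mathcal B)$ and $\neg\Gamma_X(\mathcal B)$ are exactly the target classes of (v) and (vi).

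It remains to splice in the closed-discrete-selection game (iv), for which I would invoke Theorem \ref{thm:SecondTheorem}, whose ``consequently'' clause asserts $\mathsf G_1(\mathscr N_X[\mathcal A], \neg\mathcal O_X(\mathcal B)) \equiv \mathsf G_1(\mathscr T_{MU_{\mathcal A}(X)}, \mathrm{CD}_{MU_{\mathcal B}(X)})$; reading the $\mathrm{I}\not\uparrow$ (equivalently $\mathrm{I}\uparrow$) clause yields precisely (iv) $\iff$ (v). Concretely this splits into the easy comparison $\mathsf G_1(\mathscr T_{MU_{\mathcal B}(X)}, \mathrm{CD}) \leq_{\mathrm{II}} \mathsf G_1(\mathscr T, \neg\Omega_{MU_{\mathcal B}(X),\mathbf 0})$ (a closed discrete set clusters nowhere, in particular not at $\mathbf 0$), which chained with Corollary \ref{cor:Rothberger} gives (v) $\Rightarrow$ (iv), and the substantive comparison $\mathsf G_1(\mathscr N_X[\mathcal A], \neg\Lambda_X(\mathcal B)) \leq_{\mathrm{II}} \mathsf G_1(\mathscr T_{MU_{\mathcal A}(X)}, \mathrm{CD}_{MU_{\mathcal B}(X)})$ produced by the translation functions of Theorem \ref{thm:SecondTheorem}, which gives (iv) $\Rightarrow$ (v) after recalling $\mathcal O_X(\mathcal B) = \Lambda_X(\mathcal B)$ from Lemma \ref{lem:IdealLargeCovers}. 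These four steps close all six items into a single cycle.

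The one point demanding care, and the main obstacle, is that Theorem \ref{thm:SecondTheorem} is stated under the hypothesis that $\mathcal B$ consists of sequentially compact sets, whereas for the second list $\mathcal B = K(X)$ and compact subsets of a general regular space need not be sequentially compact. That hypothesis enters the proof of Theorem \ref{thm:SecondTheorem} only through Lemma \ref{lem:SequentiallyCompact}, and only to secure that $\Phi[B]$ is bounded for each $\Phi \in MU(X)$ and $B \in \mathcal B$. This conclusion survives when $\mathcal B = K(X)$, because an usco map carries a compact set to a compact, hence bounded, subset of $\mathbb R$ (for an open cover of $\Phi[B]$, finitely many members cover each compact $\Phi(x)$, whose union $V_x$ gives an open neighborhood $\Phi^\leftarrow(V_x)$ of $x$; compactness of $B$ then yields a finite subcover). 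Thus for the first list I would cite Theorem \ref{thm:SecondTheorem} directly, finite sets being sequentially compact, while for the second list I would substitute the boundedness of $\Phi[B]$ for compact $B$ in place of the sequential-compactness appeal, after which the remainder of the argument for Theorem \ref{thm:SecondTheorem} applies verbatim.
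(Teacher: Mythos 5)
Your proposal is correct, and its skeleton is exactly the assembly the paper intends (the paper gives no explicit proof of this corollary): items (i)--(iii) and (v) come from the equivalences and the duality in Corollary \ref{cor:Rothberger} specialized to \(\mathcal A = \mathcal B = [X]^{<\omega}\), resp. \(K(X)\), using that a regular space is \(\mathcal A\)-normal when \(\mathcal A \subseteq K(X)\); item (vi) comes from Theorem \ref{thm:TkachukStrengthening}; and item (iv) is spliced in via the ``consequently'' clause of Theorem \ref{thm:SecondTheorem}, with all the directional bookkeeping along \(\leq_{\mathrm{II}}\) handled correctly. The one place where you genuinely depart from the paper is also the most valuable part of your write-up: you noticed that Theorem \ref{thm:SecondTheorem} is stated under the hypothesis that \(\mathcal B\) consists of \emph{sequentially} compact sets, which \(\mathcal B = K(X)\) need not satisfy for a general regular \(X\) (compact spaces such as \(\beta\omega\) or \([0,1]^{[0,1]}\) are not sequentially compact), whereas the paper applies the theorem to \(K(X)\) without comment here and in Corollary \ref{cor:ParticularMetrizability}. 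Your repair is the right one: sequential compactness enters the proof of Theorem \ref{thm:SecondTheorem} only through Lemma \ref{lem:SequentiallyCompact}, whose sole role is to guarantee that \(\Phi[B]\) is bounded for each \(B \in \mathcal B\), and for compact \(B\) this follows from the standard fact that an usco image of a compact set is compact (the finite-subcover argument you sketch is complete and does not need minimality). So your proof both closes the cycle of equivalences and quietly fills a hypothesis mismatch that the paper leaves to the reader; it would be worth recording your observation as a one-line remark, since it is what makes the second list legitimate for arbitrary regular \(X\).
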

In general, Corollaries \ref{cor:ParticularMetrizability} and \ref{cor:WeakerThanMetrizability} are strictly separate,
as the following example demonstrates.
\begin{example}
    Let \(X\) be the one-point Lindel{\"{o}}fication of \(\omega_1\) with the discrete topology.
    In \cite[Ex. 3.24]{CHCompactOpen}, it is shown that \(X\) has the property that
    \(\mathrm{II} \uparrow \mathsf G_1(\mathcal K_X,\mathcal K_X)\), but
    \(\mathrm{II} \underset{\mathrm{mark}}{\not\uparrow} \mathsf G_1(\mathcal K_X,\mathcal K_X)\).
\end{example}

However, according to Theorem \ref{thm:PawlikowskiStuff}, if Two can win against predetermined strategies
in some Rothberger-like games, Two can actually win against full-information strategies in those games.
\begin{theorem} \label{thm:PawlikowskiStuff}
    Let \(X\) be any space.
    \begin{enumerate}[label=(\roman*)]
        \item
        By Pawlikowski \cite{Pawlikowski},
        \[\mathrm{I} \underset{\mathrm{pre}}{\uparrow} \mathsf G_1(\mathcal O_X , \mathcal O_X) \iff
        \mathrm{I} \uparrow \mathsf G_1(\mathcal O_X , \mathcal O_X).\]
        \item
        By Scheepers \cite{ScheepersIII} (see also \cite[Cor. 4.12]{CHVietoris}),
        \[\mathrm{I} \underset{\mathrm{pre}}{\uparrow} \mathsf G_1(\Omega_X , \Omega_X) \iff
        \mathrm{I} \uparrow \mathsf G_1(\Omega_X , \Omega_X).\]
        \item
        By \cite[Thm. 4.21]{CHVietoris},
        \[\mathrm{I} \underset{\mathrm{pre}}{\uparrow} \mathsf G_1(\mathcal K_X , \mathcal K_X) \iff
        \mathrm{I} \uparrow \mathsf G_1(\mathcal K_X , \mathcal K_X).\]
    \end{enumerate}
\end{theorem}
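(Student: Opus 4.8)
The plan is to treat the three equivalences by a single scheme. Recall from the excerpt that $\Omega_X = \mathcal O_X([X]^{<\omega})$ and $\mathcal K_X = \mathcal O_X(K(X))$, so parts (ii) and (iii) are instances of the ideal-indexed game $\mathsf G_1(\mathcal O_X(\mathcal A),\mathcal O_X(\mathcal A))$ for the ideals $\mathcal A = [X]^{<\omega}$ and $\mathcal A = K(X)$, while part (i), the classical Rothberger game $\mathsf G_1(\mathcal O_X,\mathcal O_X)$, is the base case from which the others generalize. In each case the forward implication is immediate, since a predetermined strategy is already a full-information strategy, so $\mathrm{I}\underset{\mathrm{pre}}{\uparrow}$ trivially gives $\mathrm{I}\uparrow$. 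All content lies in the reverse implication, which I would prove in its contrapositive form $\mathrm{I}\underset{\mathrm{pre}}{\not\uparrow}\Rightarrow\mathrm{I}\not\uparrow$. By the remark following the definition of $\mathsf S_1$, the hypothesis $\mathrm{I}\underset{\mathrm{pre}}{\not\uparrow}$ is exactly the selection principle (Rothberger, $\omega$-Rothberger, or $k$-Rothberger respectively), so the target reduces to: \emph{if $X$ satisfies the relevant selection principle, then One has no winning full-information strategy}.

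To prove that, I fix a strategy $\sigma$ for One and construct a single play by Two defeating it. The relevant object is the tree of legal positions: to each finite sequence $s = \langle U_0,\dots,U_{n-1}\rangle$ of Two's responses I attach the cover $\sigma(s)$ that One plays next. By Lemma \ref{lem:IdealLargeCovers} each $\sigma(s)$ is in fact a $\Lambda_X(\mathcal A)$-cover, so every $A\in\mathcal A$ lies inside infinitely many of its members; this redundancy is what makes the covers along distinct branches mutually compatible. The structural heart of the argument is that $\mathcal A$ is closed under finite unions: whenever the construction must simultaneously meet finitely many covering demands $A_1,\dots,A_m\in\mathcal A$ coming from finitely many nodes, the single set $A_1\cup\cdots\cup A_m$ is again in $\mathcal A$ and hence is contained in one member of each presented cover, so Two can answer many demands at once. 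I would then run Pawlikowski's level-by-level recursion, enumerating the nodes of the tree so that a single application of the selection principle $\mathsf S_1(\mathcal O_X(\mathcal A),\mathcal O_X(\mathcal A))$ to a well-chosen $\omega$-sequence of the covers $\sigma(s)$ produces responses that assemble into an $\mathcal O_X(\mathcal A)$-cover along one infinite branch; along that branch Two wins, contradicting that $\sigma$ was winning.

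The main obstacle is the \emph{branching width} of this tree: One's cover $\sigma(s)$ may have arbitrarily (even uncountably) many members, so Two has that many legal responses at each node, and the $\omega$-indexed sequence of covers that $\mathsf S_1$ directly consumes cannot be read off a naive branch. Pawlikowski's contribution is precisely the bookkeeping that tames this, scheduling the positions so that the entire strategy tree is defeated using countably many selections, each drawn from a cover already presented by $\sigma$. I expect the delicate point to be \emph{cover-type preservation}: for part (i) any open cover suffices and the recursion is Pawlikowski's original, but for parts (ii) and (iii) one must guarantee that the finite unions formed during the recursion remain $\omega$-covers, respectively $k$-covers, which is exactly where the ideal structure is used (finite unions of finite sets are finite, and finite unions of compact sets are compact, so each merged demand still sits inside a single cover member). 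I would invoke the $\omega$-cover combinatorics of Scheepers for part (ii) and the $k$-cover results of \cite{CHVietoris} for part (iii) to secure this step, which is the reason parts (ii) and (iii) are cited separately from part (i).
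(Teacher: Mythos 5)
The paper gives this theorem no proof at all: the citations embedded in the statement \emph{are} the justification, since all three equivalences are quoted results (Pawlikowski for open covers, Scheepers for \(\omega\)-covers, the Vietoris-space paper for \(k\)-covers). Your proposal's load-bearing content is the same: you correctly note that \(\mathrm{I} \underset{\mathrm{pre}}{\uparrow} \implies \mathrm{I} \uparrow\) is trivial (a predetermined strategy is a strategy), correctly reduce the converse, via the equivalence of \(\mathrm{I} \underset{\mathrm{pre}}{\not\uparrow}\) with \(\mathsf S_1\), to ``the selection principle defeats full-information strategies,'' and then defer that core to the very same references. To that extent your attempt matches the paper.

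Two points in your expository sketch are inaccurate, however, and would matter if anyone tried to execute your plan rather than just cite. First, part (i) is not a ``base case'' of the ideal-indexed scheme: \(\mathcal O_X\) is not \(\mathcal O_X(\mathcal A)\) for any ideal \(\mathcal A\) of closed sets (an arbitrary open cover need not have a single member containing a given finite set --- that is precisely the gap between \(\mathcal O_X\) and \(\Omega_X\)), so Lemma \ref{lem:IdealLargeCovers} does not apply there; One's covers in the plain Rothberger game need not be large covers at all (in \(\mathbb R\), the non-trivial cover \(\{(-\infty,1),(0,\infty)\}\) puts the point \(-5\) in only one member). Second, the cited works do not prove (ii) and (iii) by re-running Pawlikowski's recursion with ``finite-union bookkeeping'' on the ideal; they reduce to the Rothberger case on auxiliary spaces --- finite powers, respectively the hyperspaces \(\mathcal{P}_{\mathrm{fin}}(X)\) and \(\mathbb K(X)\) with the Vietoris topology (compare Corollary \ref{cor:BigRothberger} in this paper) --- and then invoke Pawlikowski's theorem there. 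A direct generalization of the recursion along the lines you describe may well be possible, but it is not what the citations deliver; so either your sketch should follow their reduction, or you owe the full combinatorial details of the generalized recursion, which are nontrivial and nowhere supplied in your proposal.
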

\begin{corollary} \label{cor:BigRothberger}
    For any regular space \(X\), the following are equivalent.
    \begin{enumerate}[label=(\roman*)]
        \item
        \(X\) is \(\omega\)-Rothberger.
        \item
        \(X^{<\omega}\) is Rothberger, where \(X^{<\omega}\) is the disjoint union of \(X^n\) for all \(n \geq 1\).
        (See \cite{Sakai1988} and \cite[Cor. 3.11]{CHVietoris}.)
        \item
        \(\mathcal{P}_{\mathrm{fin}}(X)\) is Rothberger, where \(\mathcal{P}_{\mathrm{fin}}(X)\)
        is the set \([X]^{<\omega}\) with the subspace topology inherited from \(\mathbb K(X)\).
        (See \cite[Cor. 4.11]{CHVietoris}.)
        \item
        \(\mathrm{I} \not\uparrow \mathsf G_1(\Omega_X , \Omega_X)\).
        \item
        \(MU_p(X)\) has strong countable fan-tightness at \(\mathbf 0\).
        \item
        \(MU_p(X)\) has strong countable dense fan-tightness at \(\mathbf 0\).
        \item
        \(\mathrm{II} \underset{\mathrm{mark}}{\not\uparrow} \mathsf G_1(\mathscr N_X[[X]^{<\omega}], \neg \Omega_X)\).
        \item
        \(\mathrm{II} \not\uparrow \mathsf G_1(\mathscr N_X[[X]^{<\omega}], \neg \Omega_X)\).
        \item
        \(\mathrm{II} \underset{\mathrm{mark}}{\not\uparrow} \mathsf G_1(\mathscr T_{MU_p(X)}, \mathrm{CD}_{MU_p(X)})\).
        \item
        \(\mathrm{II} \not\uparrow \mathsf G_1(\mathscr T_{MU_p(X)}, \mathrm{CD}_{MU_p(X)})\).
        \item
        \(\mathrm{II} \underset{\mathrm{mark}}{\not\uparrow} \mathsf G_1(\mathscr N_{MU_p(X),\mathbf 0}, \neg \Omega_{MU_p(X),\mathbf 0})\).
        \item
        \(\mathrm{II} \not\uparrow \mathsf G_1(\mathscr N_{MU_p(X),\mathbf 0}, \neg \Omega_{MU_p(X),\mathbf 0})\).
    \end{enumerate}
    Also, the following are equivalent.
    \begin{enumerate}[label=(\roman*)]
        \item
        \(X\) is \(k\)-Rothberger.
        \item
        \(\mathrm{I} \not\uparrow \mathsf G_1(\mathcal K_X , \mathcal K_X)\).
        \item
        \(MU_k(X)\) has strong countable fan-tightness at \(\mathbf 0\).
        \item
        \(MU_k(X)\) has strong countable dense fan-tightness at \(\mathbf 0\).
        \item
        \(\mathrm{II} \underset{\mathrm{mark}}{\not\uparrow} \mathsf G_1(\mathscr N_X[K(X)], \neg \mathcal K_X)\).
        \item
        \(\mathrm{II} \not\uparrow \mathsf G_1(\mathscr N_X[K(X)], \neg \mathcal K_X)\).
        \item
        \(\mathrm{II} \underset{\mathrm{mark}}{\not\uparrow} \mathsf G_1(\mathscr T_{MU_k(X)}, \mathrm{CD}_{MU_k(X)})\).
        \item
        \(\mathrm{II} \not\uparrow \mathsf G_1(\mathscr T_{MU_k(X)}, \mathrm{CD}_{MU_k(X)})\).
        \item
        \(\mathrm{II} \underset{\mathrm{mark}}{\not\uparrow} \mathsf G_1(\mathscr N_{MU_k(X),\mathbf 0}, \neg \Omega_{MU_k(X),\mathbf 0})\).
        \item
        \(\mathrm{II} \not\uparrow \mathsf G_1(\mathscr N_{MU_k(X),\mathbf 0}, \neg \Omega_{MU_k(X),\mathbf 0})\).
    \end{enumerate}
\end{corollary}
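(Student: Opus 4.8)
The plan is to prove both lists at once, instantiating Corollary \ref{cor:Rothberger}, Theorem \ref{thm:SecondTheorem}, and Theorem \ref{thm:PawlikowskiStuff} at \(\mathcal A = \mathcal B = [X]^{<\omega}\) for the first list and at \(\mathcal A = \mathcal B = K(X)\) for the second. Since \(X\) is regular and in both cases \(\mathcal A \subseteq K(X)\), the remark following the definition of \(\mathcal A\)-normality guarantees that \(X\) is \(\mathcal A\)-normal, which is the only separation hypothesis those results need on the \(MU\)-side (we suppress the \(C_{\mathcal A}(X)\) statements, which would require functional \(\mathcal A\)-normality). Recalling \(\mathcal O_X([X]^{<\omega}) = \Omega_X\) and \(\mathcal O_X(K(X)) = \mathcal K_X\), I would split each list into a ``One does not win'' block built from the Rothberger-type game \(\mathcal G = \mathsf G_1(\mathcal O_X(\mathcal A),\mathcal O_X(\mathcal B))\), and a dual ``Two does not win'' block built from \(\mathcal H = \mathsf G_1(\mathscr N_X[\mathcal A], \neg \mathcal O_X(\mathcal B))\), then glue the blocks with the Pawlikowski--Scheepers strengthening.

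For the first block I would use that \(\mathsf S_1(\mathcal C,\mathcal D)\) is exactly \(\mathrm I \underset{\mathrm{pre}}{\not\uparrow} \mathsf G_1(\mathcal C,\mathcal D)\). The strong countable fan-tightness and strong countable dense fan-tightness of \(MU_{\mathcal A}(X)\) at \(\mathbf 0\) are precisely the principles \(\mathsf S_1(\Omega_{MU_{\mathcal A}(X),\mathbf 0},\Omega_{MU_{\mathcal A}(X),\mathbf 0})\) and \(\mathsf S_1(\mathscr D_{MU_{\mathcal A}(X)},\Omega_{MU_{\mathcal A}(X),\mathbf 0})\); by Corollary \ref{cor:Rothberger} the corresponding games are equivalent to \(\mathcal G\), and since equivalence preserves \(\mathrm I \underset{\mathrm{pre}}{\not\uparrow}\), all of them match the Rothberger-type property \(\mathrm I \underset{\mathrm{pre}}{\not\uparrow}\mathcal G\). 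Theorem \ref{thm:PawlikowskiStuff} then identifies this with \(\mathrm I \not\uparrow\mathcal G\), closing the first block.

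For the second block, Corollary \ref{cor:Rothberger} together with Theorem \ref{thm:SecondTheorem} give
\begin{align*}
  \mathcal H
  &\equiv \mathsf G_1(\mathscr N_{MU_{\mathcal A}(X),\mathbf 0}, \neg \Omega_{MU_{\mathcal B}(X),\mathbf 0})\\
  &\equiv \mathsf G_1(\mathscr T_{MU_{\mathcal A}(X)}, \neg \Omega_{MU_{\mathcal B}(X),\mathbf 0})\\
  &\equiv \mathsf G_1(\mathscr T_{MU_{\mathcal A}(X)}, \mathrm{CD}_{MU_{\mathcal B}(X)}),
\end{align*}
so the \(\mathrm{II}\underset{\mathrm{mark}}{\not\uparrow}\) statements coincide throughout the block, as do the \(\mathrm{II}\not\uparrow\) statements. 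To join the two blocks I would invoke the duality of \(\mathcal G\) and \(\mathcal H\) asserted in Corollary \ref{cor:Rothberger}: duality yields \(\mathrm I \underset{\mathrm{pre}}{\uparrow}\mathcal G \iff \mathrm{II}\underset{\mathrm{mark}}{\uparrow}\mathcal H\) and \(\mathrm I \uparrow\mathcal G \iff \mathrm{II}\uparrow\mathcal H\), and negating these links \(\mathrm I \underset{\mathrm{pre}}{\not\uparrow}\mathcal G\) to the Markov statements of the second block and \(\mathrm I \not\uparrow\mathcal G\) to the full-information ones. The crucial observation is that the collapse \(\mathrm I \underset{\mathrm{pre}}{\not\uparrow}\mathcal G \iff \mathrm I \not\uparrow\mathcal G\) from Theorem \ref{thm:PawlikowskiStuff} is exactly what forces the Markov and full-information ``Two does not win'' statements to agree; transporting that collapse across duality is the conceptual heart of the proof.

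The one technical obstacle is that Theorem \ref{thm:SecondTheorem}, which supplies the closed-discrete-selection games, is stated for ideals \(\mathcal B\) of sequentially compact sets, whereas the second list uses \(\mathcal B = K(X)\), and a compact set need not be sequentially compact. I would dispatch this by noting that sequential compactness enters that proof only through Lemma \ref{lem:SequentiallyCompact}, to ensure \(\Phi[B]\) is bounded for \(\Phi \in MU(X)\); for compact \(B\) this boundedness holds unconditionally, since an usco map sends compact sets to compact (hence bounded) subsets of \(\mathbb R\). Thus the argument of Theorem \ref{thm:SecondTheorem} goes through verbatim with \(\mathcal B = K(X)\), and the closed-discrete-selection items join the rest of the \(k\)-covering list. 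Finally, the two external equivalences in the first list, involving \(X^{<\omega}\) and \(\mathcal P_{\mathrm{fin}}(X)\), are quoted directly from the cited references.
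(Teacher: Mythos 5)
Your proposal is correct, and it follows the route the paper intends: the corollary is stated without proof, to be assembled exactly as you do from Corollary \ref{cor:Rothberger} (instantiated at \(\mathcal A = \mathcal B = [X]^{<\omega}\) and at \(\mathcal A = \mathcal B = K(X)\), with \(\mathcal A\)-normality supplied by regularity because \(\mathcal A \subseteq K(X)\)), Theorem \ref{thm:SecondTheorem}, the duality of \(\mathsf G_1(\mathcal O_X(\mathcal A),\mathcal O_X(\mathcal B))\) with \(\mathsf G_1(\mathscr N_X[\mathcal A],\neg\mathcal O_X(\mathcal B))\), the identification of \(\mathsf S_1(\mathcal C,\mathcal D)\) with \(\mathrm I \underset{\mathrm{pre}}{\not\uparrow} \mathsf G_1(\mathcal C,\mathcal D)\), and the Pawlikowski--Scheepers collapse of Theorem \ref{thm:PawlikowskiStuff} transported across duality to merge the Markov and full-information statements on Two's side.

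The one place you go beyond the paper is the sequential-compactness patch, and it deserves emphasis because it is genuinely needed, not merely cosmetic: Theorem \ref{thm:SecondTheorem} and Corollary \ref{cor:PointOpen} hypothesize that \(\mathcal B\) consists of \emph{sequentially} compact sets, and for a general regular space the members of \(K(X)\) need not be sequentially compact (consider \(X = \beta\omega\)), so the closed-discrete items of the \(k\)-list are not literally covered by those results as stated. Your repair is sound: sequential compactness enters the proof of Theorem \ref{thm:SecondTheorem} only through Lemma \ref{lem:SequentiallyCompact}, whose sole purpose is to guarantee that \(\Phi[B]\) is bounded for every \(\Phi \in MU(X)\) and \(B \in \mathcal B\); when \(B\) is compact this is automatic, since an usco map sends compact sets to compact sets (cover \(\Phi[B]\), take a finite subcover of each \(\Phi(x)\), pull the resulting open sets back through \(\Phi^\leftarrow\), and use compactness of \(B\)). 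With that observation the argument of Theorem \ref{thm:SecondTheorem} runs verbatim for \(\mathcal B = K(X)\), so the corollary holds as stated; your write-up in fact closes a small gap that the paper glosses over in deriving its own \(k\)-list (and likewise the \(MU_k(X)\) items of Corollaries \ref{cor:ParticularMetrizability} and \ref{cor:WeakerThanMetrizability}).
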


Using the techniques of Gerlits and Nagy \cite{GerlitsNagy}, Galvin \cite{Galvin1978}, and Telg{\'{a}}rsky \cite{Telgarsky1975},
we offer an analog of Lemma \ref{lem:TelgarskyCofinal} related to the so-called
\emph{weak \(k\)-covering number} in \cite{HolaHoly2022}.
\begin{definition}
    For a collection \(\mathcal A\) of closed subsets of \(X\), define the \textdefn{weak covering number}
    of \(\mathcal A\) to be
    \[wkc(\mathcal A) = \min\left\{ \kappa \in \mathrm{CARD} : (\exists \mathscr F \in [\mathcal A]^\kappa)
    \ X = \mathrm{cl}\left( \bigcup \mathscr F\right) \right\}.\]
\end{definition}
\begin{definition}
    For a space \(X\), let \(\mathrm{DO}_X\) be the set of all \(\mathscr U \subseteq \mathscr T_X\)
    so that \(X = \mathrm{cl}\left( \bigcup \mathscr U \right)\).
\end{definition}
\begin{lemma}
    Let \(\mathcal A \subseteq \wp^+(X)\) where \(X\) is a regular space.
    Then
    \[\mathrm{I} \underset{\mathrm{pre}}{\uparrow} \mathsf G_1(\mathscr N_X[\mathcal A] , \neg \mathrm{DO}_X)
    \iff wkc(\mathcal A) \leq \omega.\]

    If \(X\) is metrizable and \(\mathcal A \subseteq K(X)\), then
    \begin{align*}
        \mathrm{I} \uparrow \mathsf G_1(\mathscr N_X[\mathcal A] , \neg \mathrm{DO}_X)
        &\iff \mathrm{I} \underset{\mathrm{pre}}{\uparrow} \mathsf G_1(\mathscr N_X[\mathcal A] , \neg \mathrm{DO}_X)\\
        &\iff wkc(\mathcal A) \leq \omega.
    \end{align*}
\end{lemma}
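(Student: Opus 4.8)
The plan is to establish the predetermined biconditional in the stated generality and then bootstrap to the full-information version under the metric hypotheses. For the first biconditional both directions are short. If \(wkc(\mathcal A) \le \omega\), fix \(\{A_n : n \in \omega\} \subseteq \mathcal A\) with \(X = \mathrm{cl}(\bigcup_n A_n)\) and let One use the predetermined strategy \(n \mapsto \mathscr N_X(A_n)\); any legal response \(U_n\) satisfies \(A_n \subseteq U_n\), so \(\bigcup_n U_n \supseteq \bigcup_n A_n\) is dense, the run lands in \(\mathrm{DO}_X\), and One wins. Conversely, a winning predetermined strategy is just a sequence \(\langle A_n : n \in \omega\rangle\) in \(\mathcal A\) against which every sequence of responses produces a dense union; if \(\bigcup_n A_n\) were not dense, I would pick a nonempty open \(W\) disjoint from it and, invoking regularity, a nonempty open \(W'\) with \(\mathrm{cl}(W') \subseteq W\). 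Then \(A_n \cap \mathrm{cl}(W') = \emptyset\) for every \(n\), so Two may answer each turn with the single proper open set \(U_n = X \setminus \mathrm{cl}(W') \supseteq A_n\); now \(\bigcup_n U_n = X \setminus \mathrm{cl}(W')\) misses \(W'\), contradicting that the strategy wins. Hence \(\bigcup_n A_n\) is dense and \(wkc(\mathcal A) \le \omega\).

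For the metrizable case with \(\mathcal A \subseteq K(X)\), the chain \(wkc(\mathcal A) \le \omega \Rightarrow \mathrm{I} \underset{\mathrm{pre}}{\uparrow} \Rightarrow \mathrm{I}\uparrow\) is immediate from the first part together with the fact that a predetermined strategy is a full-information strategy that ignores Two's moves; so the whole content is the implication \(\mathrm{I}\uparrow \Rightarrow wkc(\mathcal A) \le \omega\). Fix a compatible metric \(d\) and write \(\mathbb B(A,1/k) = \{x \in X : d(x,A) < 1/k\}\). The key tool is that, for compact \(A\) and any open \(U \supseteq A\), some \(\mathbb B(A,1/k) \subseteq U\) by a standard compactness argument, so the neighborhoods \(\mathbb B(A,1/k)\) are cofinal from below among the proper open supersets of \(A\), and each is a legal response of Two whenever \(\mathscr N_X(A) \ne \emptyset\). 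I would therefore restrict attention to runs in which Two always answers One's compact set \(A\) by some \(\mathbb B(A,1/k)\); since such runs are genuine runs of the game, it suffices to defeat any purported winning strategy among these.

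So suppose \(\sigma\) is a winning full-information strategy for One. Reading \(\sigma\) against ball-responses, recursively assign to each \(s = \langle k_0,\dots,k_{n-1}\rangle \in \omega^{<\omega}\) the compact set \(A_s \in \mathcal A\) that \(\sigma\) plays after Two has answered with \(\mathbb B(A_{s\restriction 0},1/k_0),\dots,\mathbb B(A_{s\restriction(n-1)},1/k_{n-1})\). Since \(\omega^{<\omega}\) is countable, \(\{A_s : s \in \omega^{<\omega}\}\) is a countable subfamily of \(\mathcal A\), and it is enough to show its union is dense. If it is not, fix a nonempty open \(W\) disjoint from \(\bigcup_s A_s\), a point \(w \in W\), and \(r > 0\) with \(B(w,2r) \subseteq W\); then \(d(A_s,w) \ge 2r\) for every \(s\), so \(\mathbb B(A_s,1/k) \cap B(w,r) = \emptyset\) whenever \(1/k < r\). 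The decisive point is that \(r\) does not depend on \(s\): Two can follow the single branch that at stage \(n\) plays any \(k_n\) with \(1/k_n < r\) (and large enough to keep the response proper), and then every response avoids the fixed nonempty open set \(B(w,r)\), so \(\bigcup_n U_n\) misses \(B(w,r)\) and the run lies in \(\neg\mathrm{DO}_X\), contradicting that \(\sigma\) wins. Thus \(\bigcup_s A_s\) is dense and \(wkc(\mathcal A) \le \omega\). The main obstacle is exactly securing one nonempty open set avoidable along a single whole branch; the node-independence of \(r\), a consequence of compactness together with a fixed metric, is what lets this countable-tree argument go through without any transfinite-rank machinery.
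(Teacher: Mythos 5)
Your proof is correct. The first biconditional is argued exactly as in the paper (regularity produces an open set whose closure misses \(\bigcup_n A_n\), and Two answers every move with its complement), and for the metrizable part you follow the same skeleton the paper does: restrict Two to ball-responses, read the winning strategy \(\sigma\) along the resulting countably-branching tree to extract a countable subfamily of \(\mathcal A\), and defeat \(\sigma\) along a single branch if that subfamily's union fails to be dense. The one place you genuinely diverge is how the defeating branch is selected, and your version is sharper. The paper fixes an open \(V\) with \(\mathrm{cl}(V)\) disjoint from the closure of the countable union and then chooses the radius \emph{node by node}: at each stage it uses the positive distance between the compact set just played and the closed set \(\mathrm{cl}(V)\) --- this is precisely where \(\mathcal A \subseteq K(X)\) enters the paper's proof. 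You instead fix a point \(w\) and \(r > 0\) with \(B(w,2r)\) inside the avoided open set and observe that the single threshold \(1/k < r\) works at every node, because each \(A_s\) is disjoint from \(B(w,2r)\) and hence at distance at least \(2r\) from \(w\). That observation uses only disjointness from a fixed ball, never compactness; so your closing attribution of the node-independence of \(r\) to compactness is misplaced, and likewise the ``key tool'' you announce (cofinality of the balls \(\mathbb B(A,1/k)\) below the open supersets of a compact \(A\)) is never actually invoked --- all you need is that ball-responses are legal, which along your branch is automatic since they avoid \(B(w,r)\). The upshot is that your argument proves the stronger statement in which \(\mathcal A \subseteq K(X)\) is weakened to an arbitrary \(\mathcal A \subseteq \wp^+(X)\) for metrizable \(X\). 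One shared loose end, not a gap: both you and the paper should restrict the tree to nodes whose ball-responses are proper (hence legal moves); this is harmless, as the responses along the defeating branch are proper automatically.
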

\begin{proof}
    The implications
    \[wkc(\mathcal A) \leq \omega
    \implies \mathrm{I} \underset{\mathrm{pre}}{\uparrow} \mathsf G_1(\mathscr N_X[\mathcal A] , \neg \mathrm{DO}_X)\]
    and
    \[\mathrm{I} \underset{\mathrm{pre}}{\uparrow} \mathsf G_1(\mathscr N_X[\mathcal A] , \neg \mathrm{DO}_X)
    \implies \mathrm{I} \uparrow \mathsf G_1(\mathscr N_X[\mathcal A] , \neg \mathrm{DO}_X)\]
    are evident and hold with no assumptions on \(X\).

    Suppose \(X\) is regular and that \(wkc(\mathcal A) > \omega\).
    Any predetermined strategy for One in \(\mathsf G_1(\mathscr N_X[\mathcal A] , \neg \mathrm{DO}_X)\)
    corresponds to a sequence \(\langle A_n : n \in \omega \rangle \in \mathcal A^\omega\).
    We show that no such strategy can be winning for One.
    Since \(\bigcup\{A_n:n\in\omega\}\) is not dense in \(X\), we can find \(U \in \mathscr T_X\)
    so that \(U \cap \bigcup\{A_n:n\in\omega\} = \emptyset\).
    As \(X\) is assumed to be regular, without loss of generality, we can assume that
    \(\mathrm{cl}(U) \cap \bigcup\{A_n:n\in\omega\} = \emptyset\).
    It follows that \(A_n \subseteq X \setminus \mathrm{cl}(U) =: V\)
    for all \(n \in \omega\).
    Hence, \(\langle \mathscr N_X(A_n) : n \in \omega \rangle\) is not a winning strategy for One.
    That is, \(\mathrm{I} \underset{\mathrm{pre}}{\not\uparrow}
    \mathsf G_1(\mathscr N_X[\mathcal A] , \neg \mathrm{DO}_X)\).

    Now suppose that \(X\) is metrizable, \(\mathcal A \subseteq K(X)\), and
    \(\mathrm{I} \uparrow \mathsf G_1(\mathscr N_X[\mathcal A] , \neg \mathrm{DO}_X)\).
    Let \(\sigma\) be a winning strategy for One which is coded by elements of \(\mathcal A\).
    Let \(\mathbb T_0 = \langle \rangle\) and, for \(n \in \omega\),
    \[\mathbb T_{n+1} = \left\{ w^\frown \left\langle \sigma(w) ,
    \mathbb B\left(\sigma(w),2^{-\ell}\right)\right\rangle : w \in \mathbb T_n \wedge \ell \in \omega \right\}.\]
    Notice that \(\mathscr F = \bigcup_{n\in\omega}\{ \sigma(w) : w \in \mathbb T_n\}\) is a countable subset
    of \(\mathcal A\).

    We show that \(X = \mathrm{cl}\left( \bigcup \mathscr F \right)\) by way of
    contradiction.
    Suppose \(X \setminus \mathrm{cl}\left( \bigcup \mathscr F \right) \neq \emptyset\)
    and let \(V \in \mathscr T_X\) be open so that
    \(\mathrm{cl}(V) \cap \mathrm{cl}\left( \bigcup \mathscr F \right) = \emptyset\).
    For \(A_0 := \sigma(\emptyset)\), let \(\ell_0 \in \omega\) be so that
    \(\mathrm{cl}(V) \cap \mathbb B\left(A_0,2^{-\ell_0}\right) = \emptyset\) and set
    \(w_0 = \langle A_0 , \mathbb B\left(A_0,2^{-\ell_0}\right) \rangle \in \mathbb T_1\).

    For \(n \in \omega\), suppose we have \(w_n \in \mathbb T_{n+1}\) defined.
    Now, for \(A_{n+1} := \sigma(w_n)\), let \(\ell_{n+1} \in \omega\) be so that
    \(\mathrm{cl}(V) \cap \mathbb B\left(A_{n+1},2^{-\ell_{n+1}}\right) = \emptyset\).
    Then define \(w_{n+1} = w_n^\frown \left\langle A_{n+1} ,
    \mathbb B\left(A_{n+1},2^{-\ell_{n+1}}\right) \right\rangle \in \mathbb T_{n+2}\).

    Now, we have a run of the game according to \(\sigma\) with the property that
    \[\mathrm{cl}(V) \cap \bigcup_{n\in\omega} \mathbb B\left(A_n, 2^{-\ell_n}\right) = \emptyset.\]
    It follows that
    \[V \cap \mathrm{cl}\left( \bigcup_{n\in\omega} \mathbb B\left(A_n, 2^{-\ell_n}\right) \right) = \emptyset,\]
    which contradicts the assumption that \(\sigma\) is a winning strategy.
\end{proof}
Consequently, for a regular space \(X\),
\begin{itemize}
    \item
    \(\mathrm{I} \underset{\mathrm{pre}}{\uparrow} \mathsf G_1(\mathscr N_X[[X]^{<\omega}] , \neg \mathrm{DO}_X)\)
    if and only if \(X\) is separable and
    \item
    \(\mathrm{I} \underset{\mathrm{pre}}{\uparrow} \mathsf G_1(\mathscr N_X[K(X)] , \neg \mathrm{DO}_X)\)
    if and only if \(X\) admits a countable collection of compact subsets whose union is dense.
\end{itemize}
Also, by Lemma \ref{lem:DualGames}, \(\mathsf G_1(\mathscr N_X[\mathcal A] , \neg \mathrm{DO}_X)\)
is dual to \(\mathsf G_1(\mathcal O_X(\mathcal A), \mathrm{DO}_X)\).
Hence, we obtain
\begin{corollary} \label{cor:FinalCorollary}
    If \(X\) is a regular space, then
    \begin{align*}
        X \text{ is separable}
        &\iff \mathrm{I} \underset{\mathrm{pre}}{\uparrow} \mathsf G_1(\mathscr N_X[[X]^{<\omega}] , \neg \mathrm{DO}_X)\\
        &\iff \mathrm{II} \underset{\mathrm{mark}}{\uparrow} \mathsf G_1(\Omega_X , \mathrm{DO}_X)
    \end{align*}
    and
    \begin{align*}
        wkc(K(X)) \leq \omega
        &\iff \mathrm{I} \underset{\mathrm{pre}}{\uparrow} \mathsf G_1(\mathscr N_X[K(X)] , \neg \mathrm{DO}_X)\\
        &\iff \mathrm{II} \underset{\mathrm{mark}}{\uparrow} \mathsf G_1(\mathcal K_X , \mathrm{DO}_X).
    \end{align*}
    If \(X\) is, in addition, metrizable, then
    \begin{align*}
        X \text{ is separable}
        &\iff \mathrm{I} \underset{\mathrm{pre}}{\uparrow} \mathsf G_1(\mathscr N_X[[X]^{<\omega}] , \neg \mathrm{DO}_X)\\
        &\iff \mathrm{I} \uparrow \mathsf G_1(\mathscr N_X[[X]^{<\omega}] , \neg \mathrm{DO}_X)\\
        &\iff \mathrm{II} \underset{\mathrm{mark}}{\uparrow} \mathsf G_1(\Omega_X , \mathrm{DO}_X)\\
        &\iff \mathrm{II} \uparrow \mathsf G_1(\Omega_X , \mathrm{DO}_X)
    \end{align*}
    and
    \begin{align*}
        wkc(K(X)) \leq \omega
        &\iff \mathrm{I} \underset{\mathrm{pre}}{\uparrow} \mathsf G_1(\mathscr N_X[K(X)] , \neg \mathrm{DO}_X)\\
        &\iff \mathrm{I} \uparrow \mathsf G_1(\mathscr N_X[K(X)] , \neg \mathrm{DO}_X)\\
        &\iff \mathrm{II} \underset{\mathrm{mark}}{\uparrow} \mathsf G_1(\mathcal K_X , \mathrm{DO}_X)\\
        &\iff \mathrm{II} \uparrow \mathsf G_1(\mathcal K_X , \mathrm{DO}_X).
    \end{align*}
\end{corollary}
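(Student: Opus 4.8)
The plan is to assemble this corollary entirely from the immediately preceding Lemma, the duality recorded just before the statement, and the two index identifications $\mathcal O_X([X]^{<\omega}) = \Omega_X$ and $\mathcal O_X(K(X)) = \mathcal K_X$. No new game-theoretic construction is required; all the content sits in the preceding Lemma, and the task here is to thread its strategy-strength conclusions through the stated duality.

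First I would record the bottom rungs of the two bi-implication chains, which come directly from the first half of the preceding Lemma applied to $\mathcal A = [X]^{<\omega}$ and to $\mathcal A = K(X)$: for regular $X$ one has $\mathrm{I} \underset{\mathrm{pre}}{\uparrow} \mathsf G_1(\mathscr N_X[\mathcal A], \neg \mathrm{DO}_X) \iff wkc(\mathcal A) \leq \omega$. For $\mathcal A = K(X)$ this is already the asserted $wkc(K(X)) \leq \omega$ characterization. For $\mathcal A = [X]^{<\omega}$ I would insert the elementary observation that $wkc([X]^{<\omega}) \leq \omega$ is equivalent to separability of $X$: a countable family of finite sets with dense union has countable dense union, and conversely the singletons of a countable dense subset witness $wkc([X]^{<\omega}) \leq \omega$.

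Next I would transfer each predetermined-strategy statement for One into a Markov-strategy statement for Two via the duality of $\mathsf G_1(\mathcal O_X(\mathcal A), \mathrm{DO}_X)$ with $\mathsf G_1(\mathscr N_X[\mathcal A], \neg \mathrm{DO}_X)$. By the definition of duality, $\mathrm{II} \underset{\mathrm{mark}}{\uparrow} \mathsf G_1(\mathcal O_X(\mathcal A), \mathrm{DO}_X) \iff \mathrm{I} \underset{\mathrm{pre}}{\uparrow} \mathsf G_1(\mathscr N_X[\mathcal A], \neg \mathrm{DO}_X)$; specializing $\mathcal A$ to $[X]^{<\omega}$ and to $K(X)$ then produces the $\Omega_X$ and $\mathcal K_X$ rows of the two chains, completing the regular case.

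For the metrizable case I would invoke the second half of the preceding Lemma, which adjoins $\mathrm{I} \uparrow \mathsf G_1(\mathscr N_X[\mathcal A], \neg \mathrm{DO}_X)$ to the chain whenever $\mathcal A \subseteq K(X)$; here one must note that metrizability forces $X$ to be $T_1$, so finite sets are compact and the hypothesis $\mathcal A \subseteq K(X)$ holds for $\mathcal A = [X]^{<\omega}$ as well as trivially for $\mathcal A = K(X)$. The matching full-information statement for Two then follows from the remaining clause of the duality, $\mathrm{II} \uparrow \mathsf G_1(\mathcal O_X(\mathcal A), \mathrm{DO}_X) \iff \mathrm{I} \uparrow \mathsf G_1(\mathscr N_X[\mathcal A], \neg \mathrm{DO}_X)$, again specialized to the two ideals. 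The only point demanding care is keeping the roles of the two dual games straight so that the $\mathrm{pre}$/$\mathrm{mark}$ clauses and the plain full-information clauses land on the correct rows; beyond this bookkeeping I expect no genuine obstacle.
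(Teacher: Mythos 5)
Your proposal is correct and is essentially the paper's own derivation: the paper likewise specializes the preceding lemma to $\mathcal A = [X]^{<\omega}$ (reading $wkc([X]^{<\omega}) \leq \omega$ as separability) and $\mathcal A = K(X)$, then applies the duality of $\mathsf G_1(\mathcal O_X(\mathcal A), \mathrm{DO}_X)$ with $\mathsf G_1(\mathscr N_X[\mathcal A], \neg \mathrm{DO}_X)$ from Lemma \ref{lem:DualGames} to convert One's predetermined (resp.\ full-information) statements into Two's Markov (resp.\ full-information) statements, with the metrizable case using the second half of the lemma since finite sets are compact. Your bookkeeping of the duality clauses matches the paper's intended argument exactly.
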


\section{Questions} \label{sec:Questions}

It is tempting to conjecture that an analogous result to Theorem \ref{thm:Group} holds for the
space of minimal usco maps, except that the natural candidate of pointwise set difference between two minimal usco maps
may not generally produce a minimal usco map.
For \(A,B \subseteq \mathbb R\), let \(A - B = \{ x-y : x \in A, y \in B \}\) and consider \(\Phi := \overline{\one}_{[0,1]}\).
Note that the pointwise difference \(\Phi - \Phi : X \to \mathbb R\) defined by \(\Phi(x) - \Phi(x)\) has the property that
\[\Phi(x) - \Phi(x) = \begin{cases}
    \{0\}, & x \not\in \{ 0,1 \};\\
    \{-1,0,1\}, & x \in \{0,1\}.
\end{cases}\]
Since \(\mathrm{gr}(\mathbf 0) \subseteq \mathrm{gr}(\Phi - \Phi)\), we see that \(\Phi - \Phi\) is not minimal.

We also show that the pointwise Pompeiu-Hausdorff distance between two minimal usco maps
need not be quasicontinuous.
For compact \(A, B \subseteq \mathbb R\), let
\[H_d(A,B) = \inf\{\varepsilon > 0 : A \subseteq \mathbb B(B,\varepsilon) \wedge B \subseteq \mathbb B(A,\varepsilon) \}.\]
We can define \(f : X \to \mathbb R\) by
\(f(x) = H_d\left(\overline{\one}_{[0,1]}(x) , \overline{\one}_{\mathbb R\setminus [0,1]}(x)\right)\)
and observe that
\[f(x) = \begin{cases}
    1, & x \not\in \{0,1\};\\
    0, & x\in\{0,1\}.
\end{cases}\]
So \(f\) is not quasicontinuous.
However, this doesn't mean that the pointwise Pompeiu-Hausdorff distance cannot be used to establish
an analogous result to Theorem \ref{thm:Group} for \(MU_{\mathcal A}(X)\).

We end with a few questions.
\begin{question}
    Is there a result similar to Theorem \ref{thm:Group} for general uniform spaces?
    Along these lines, are there selection games that characterize the diagonal degree
    and the uniformity degree of a uniform space?
\end{question}
\begin{question}
    Can results similar to Theorems \ref{thm:FirstEquivalence} and \ref{thm:SecondTheorem} be established
    relative to \(\Omega_{MU_{\mathcal A}(X),\Phi}\) for any \(\Phi \in MU(X)\)?
\end{question}
 \begin{question}
    How many of the equivalences and dualities of this paper can be established for games of longer length
    and for finite-selection games?
 \end{question}
\begin{question}
   How much of this theory can be recovered when we study \(MU(X,Y)\) for \(Y \neq \mathbb R\), for example,
    when \(Y\) is \([0,1]\), any metrizable space, any topological group, or any uniform space?
\end{question}

\providecommand{\bysame}{\leavevmode\hbox to3em{\hrulefill}\thinspace}
\providecommand{\MR}{\relax\ifhmode\unskip\space\fi MR }
\providecommand{\MRhref}[2]{%
  \href{http://www.ams.org/mathscinet-getitem?mr=#1}{#2}
}
\providecommand{\href}[2]{#2}

\end{document}